     \let\Cref\crtCref
     \let\cref\crtcref
\newcommand*{\Gref}[1]{\hyperref[SPDEfunc]{Eq($#1$)}}
\DeclareMathOperator{\1}{\mathbbm{1}}
\DeclareMathOperator{\I}{\mathbbm{1}}
\DeclareMathOperator{\Leb}{Leb}
\newcommand{\be}{\begin{equation}}
\newcommand{\ee}{\end{equation}}
\def\E{\hskip.15ex\mathsf{E}\hskip.10ex}
\def\P{\mathsf{P}}
\def\Var{\mathop{\mbox{\rm Var}}}
\def\eps{\varepsilon}
\def\phi{\varphi}
\newcommand{\DN}{{\substack{x\in D\\|x|\le N}}}
\newtheorem{theorem}{Theorem}[section]
\newtheorem{lemma}[theorem]{Lemma}
\newtheorem{proposition}[theorem]{Proposition}
\newtheorem{corollary}[theorem]{Corollary}
\theoremstyle{definition}
\theoremstyle{definition}\newtheorem{remark}[theorem]{Remark}
\theoremstyle{definition}\newtheorem{definition}[theorem]{Definition}
\newtheorem{convention}[theorem]{Convention}
\crefname{Lemma}{Lemma}{Lemmas}
\crefname{Theorem}{Theorem}{Theorems}
\numberwithin{equation}{section}
\renewcommand{\ge}{\geqslant}
\renewcommand{\le}{\leqslant}
\newcommand{\nn}{\nonumber}
\newcommand{\wt}{\widetilde}
\renewcommand{\d}{\partial}
\newcommand{\0}{{D}}
\newcommand{\per}{{per}}
\newcommand{\neu}{{Neu}}
\newcommand{\A}{\mathcal{A}}
\newcommand{\C}{\mathcal{C}}
\newcommand{\F}{\mathcal{F}}
\renewcommand{\H}{\mathcal{H}}
\newcommand{\N}{\mathbb{N}}
\newcommand{\Q}{\mathbb{Q}}
\newcommand{\R}{\mathbb{R}}
\renewcommand{\S}{\mathcal {S}}
\newcommand{\V}{\mathcal{V}}
\newcommand{\Z}{\mathbb{Z}}
\def\K{{\mathcal K}}
\def\({\lt(}
\def\){\rt)}
\newcommand{\la}{\langle}
\newcommand{\ra}{\rangle}
\newcommand{\lt}{\left}
\newcommand{\rt}{\right}
\renewcommand{\L}[1]{{L_{#1}}}
\newcommand{\Lm}{{\L{m}(\Omega)}}
\newcommand{\Bes}{\mathcal{B}} 
\newcommand{\se}{{\S(D),m}}
\newcommand{\Bsp}{{\mathbf{B}}}
\newcommand{\Ctimespace}[3]{{\C^{#1}L_{#2}({#3})}}
\newcommand{\Ctimespacezerom}[1]{\Ctimespace{0}{m}{#1}}
\newcommand{\varz}{\rho}
\newcommand{\Di}[1]{|#1|}
\begin{document}

\makeatletter
\let\@fnsymbol\@arabic
\makeatother

\title{Analytically weak and mild solutions to stochastic heat equation with irregular drift}

\author{Siva Athreya%
	\thanks{International Centre for Theorertical Sciences -- TIFR, Survey No. 151, Shivakote, Hesaraghatta Hobli, Bengaluru - 560 089, India.
		Email: \texttt{athreya@icts.res.in}}
  \and
 Oleg Butkovsky%
 \thanks{Weierstrass Institute, Mohrenstrasse 39, 10117 Berlin, FRG. Email: \texttt{oleg.butkovskiy@gmail.com}}$^{\,\,\,,\!\!}$
 \thanks{Institut für Mathematik, Humboldt-Universität zu Berlin, Rudower Chaussee 25,
 	12489, Berlin,	FRG.}
 \and
 Khoa  L\^e
 \thanks{School of Mathematics, University of Leeds, U.K. Email: \texttt{k.le@leeds.ac.uk}}
  \and
Leonid Mytnik%
 \thanks{Technion -- Israel Institute of Technology,
Faculty of Data and Decision Sciences,
 Haifa, 3200003, Israel.  Email: \texttt{leonid@ie.technion.ac.il}
}
}

\maketitle

\begin{abstract}
Consider the stochastic heat equation
\begin{equation*}
\partial_t u_t(x)=\frac12 \partial^2_{xx}u_t(x) +b(u_t(x))+\dot{W}_{t}(x),\quad t\in(0,T],\, x\in D,
\end{equation*}
where $b$ is a generalized function, $D$ is either $[0,1]$ or $\R$, and $\dot W$ is space-time white noise on $\R_+\times D$. If the drift $b$ is a sufficiently regular function, then it is well-known that any analytically weak solution to this equation is also analytically mild, and vice versa. We extend this result to drifts that are generalized functions, with an appropriate adaptation of the notions of mild and weak solutions. As a corollary of our results, we show that for $b\in L_p(\R)$, $p\ge1$, this equation has a unique analytically weak and mild solution, thus extending the classical results of Gy\"ongy and Pardoux (1993). 
\end{abstract}

\section{Introduction}
We consider the stochastic heat equation: 
\begin{equation}\label{SPDE}
\partial_t u_t(x)=\frac12 \partial^2_{xx}u_t(x) +b(u_t(x))+\dot{W}_{t}(x),\quad t\in[0,T],\, x\in D,
\end{equation}
where $b$ is a generalized function (distribution) in the Besov space $\C^\alpha(\R,\R)$, $\alpha \in \R$, the domain $D$ is either $[0,1]$ or $\R$, $T > 0$, and $\dot{W}$ is space-time white noise on $[0,T] \times D$. When the drift $b$ is a measurable function, it is standard to define a solution to \eqref{SPDE} as either an analytically weak or an analytically mild solution, see \cref{Def:mildsol,Def:weaksol}. If $b$ is sufficiently regular, it is well-known that these two notions coincide. However, when $b$ is a generalized function, one cannot immediately make sense of these two notions, since it is unclear how to handle the term $b(u_t(x))$. To overcome this problem, \cite{ABLM24} introduced a notion of a solution, which we refer here as a mild regularized solution, see \cref{Def:mildrsol}. This notion coincides with the analytically mild solution if the drift is sufficiently regular. In this paper, we extend the notion of an analytically weak solution to the case where the drift $b$ is a generalized function and call it a weak regularized solution, see \cref{Def:weakrsol}. \cref{thm:main0}
shows that in the entire range where the existence of a mild regularized solution is known, it coincides with the weak regularized solution. As a corollary of this result, we extend seminal works \cite{GP93a,GP93b} and show that if $b\in L_p(\R)$, $p\ge1$, then equation \eqref{SPDE} has a unique analytically weak and analytically mild solution, see \cref{thm:main1}. Previously, the existence of a unique solution was known only for $p = \infty$ (when $b$ is bounded) for mild solutions and for $p \ge 2$ for analytically weak solutions.

Let us recall that classically in the literature (see e.g. \cite{W86}), the above equation has been well-studied for the case when $b\colon \R \to \R$ is a measurable function. 
Existence and uniqueness of strong solutions were proved in \cite{W86} for the equations on a bounded domain $D$ and for $b$ being a Lipschitz continuous function with no more than linear growth. Later, strong existence and uniqueness of an analytically weak solution were established in \cite{GP93a,GP93b} under much weaker assumptions: namely, when the drift $b$ is the sum of a bounded function and an $L_p$-integrable function, $p > 2$. Path--by--path uniqueness of solutions to \eqref{SPDE} for measurable bounded $b$ was obtained recently in \cite{BM19}.  It is  known that  analytically mild and weak notions of solutions are equivalent if the drift $b$ is locally bounded ant is of at most most polynomial growth \cite{bib:Iw87}, see also \cite{SH94}. A simple proof of this result for bounded drifts can be found in \cite[Proposition~3.2]{Par21}.

Stochastic heat equation \eqref{SPDE} with distribution-valued rather than function-valued drift has attracted a lot of attention in  recent years. 
In this case, equation ~\eqref{SPDE}  is not well-posed in the
 standard sense: indeed  if  $b$ is not a function but only a
 distribution then  the drift $b(u_t(x))$ is a priori not well-defined pointwise.
 An interesting case of $b$ being a measure has been considered in \cite{BZ14}.
 A natural notion of solution to this equation in the spirit
 of \cite[Definition~2.1]{BC} was introduced in \cite{ABLM24} via the approximation of drift with a sequence of smooth functions.  This procedure gives  rise to what we will call a {\it mild regularized} solution to  SPDE~\eqref{SPDE}.  It  was shown that equation~\eqref{SPDE} has a unique strong (in the probabilistic sense) {\it mild regularized} solution if $b\in\Bes^\alpha_q$,
 $\alpha-\frac1q\ge-1$, $\alpha>-1$ and $q\in[1,\infty]$  \cite[Theorem 2.6]{ABLM24}. Weak (in the probabilistic sense) existence and uniqueness of a {\it mild regularized} solution if $b\in\C^\alpha$, $\alpha>-\frac32$, was obtained in \cite[Theorem~2.13]{BM24}. Stochastic heat equations with distributional drift and multiplicative noise were studied in a very recent paper \cite{D24}. However, no analogues of analytically weak solutions for equations with distributional drift have been considered in the literature.

This leads to several natural questions: if $b$ is a distribution, how should we define a \textit{weak regularized} solution that would be analogous to the analytically weak solution? Is the weak regularized solution equivalent to the mild regularized solution? If $b$ is an integrable function, are regularized solutions equivalent to standard solutions?  Finally, are mild and weak solutions equivalent if $b$ is not a locally bounded function? These are the questions that we will address in this paper. More precisely, in  \cref{Def:weakrsol}, we introduce the notion of a \textit{weak regularized} solution. We show the equivalence of weak and mild regularized solutions in \cref{thm:main0} under the same conditions for which the existence of a solution is known. Finally, \cref{thm:main2} and  \cref{thm:main1} show that all four notions of solutions are equivalent if $b$ is an integrable function.


\textbf{Convention on constants}.  Throughout the paper $C$ denotes a positive constant whose value may change from line to line; its dependence is always specified in the corresponding statement. 

\textbf{Acknowledgements}. The work of SA was supported in part by KE grant from the Department of Atomic Energy, Government of India, under project no. RTI4001.
 OB is  funded by the Deutsche Forschungsgemeinschaft (DFG, German Research Foundation) under Germany's Excellence Strategy --- The Berlin Mathematics Research Center MATH+ (EXC-2046/1, project ID: 390685689, sub-project EF1-22) and DFG CRC/TRR 388 ``Rough
Analysis, Stochastic Dynamics and Related Fields", Project B08. LM is supported in part by ISF grant No. ISF 1985/22.

\section{Main results}\label{s:MR}

Let us introduce the main notation.  Let $\C_b^\infty=\C^\infty_b(\R,\R)$ be the space of infinitely
differentiable real functions on $\R$ which are bounded and have bounded derivatives of all orders. For $\beta\in\R$, $q\in[1,\infty]$, let $\Bes^\beta_q$ denote the (nonhomogeneous) Besov space $\Bes^\beta_{q,\infty}(\R)$ of regularity $\beta$ and integrability $q$, see, e.g., \cite[Section~2.3.1]{T}. If $q=\infty$, we write $\C^\beta=\Bes^\beta_{\infty,\infty}$. Let $\Bsp(\0)$ be the space of bounded measurable functions on $\0$.  Let $g_t$, $t>0$ be the heat kernel on $\R$, that is
$$
g_t(x):=\frac{1}{\sqrt{2\pi t}}e^{-\frac{x^2}{2t}},\quad t>0,\,x \in \R,
$$
and let $G$ be the corresponding semigroup.
Let  $p_t^{\per}$ and $p_t^{\neu}$ be the  the heat kernels on $[0,1]$ with the periodic and Neumann boundary conditions, respectively. That is,
\begin{align*}
	&p_t^\per(x,y):=\sum_{n\in\Z} g(t,x-y+n),\quad t>0,\,x,y\in [0,1];\\
	&p_t^\neu(x,y):=\sum_{n\in\Z} (g(t,x-y+2n)+g(t,x+y+2n)),\quad t>0,\,x,y\in [0,1].
\end{align*}

As in \cite{ABLM24}, we consider the stochastic heat equation \eqref{SPDE} in three setups: on the entire real line, on the interval $[0,1]$ with periodic boundary conditions, and on the interval $[0,1]$ with Neumann boundary conditions. Let $\C_{per}^\infty([0,1])$ denote the space of infinitely differentiable periodic functions on $[0,1]$. Let $\C_{Neu}^\infty([0,1])$ denote the space of infinitely differentiable functions on $[0,1]$ satisfying $f'(0) = f'(1) = 0$ for any $f \in \C^\infty_{Neu}([0,1])$. Finally, let $\S_{Sch}(\R)$ denote the Schwartz space of rapidly decreasing, infinitely differentiable functions.

To simplify the presentation of our results we introduce the following notational convention.

\begin{convention}\label{c:conv}
Further, the triple $(D, p, \S(D))$ stands for one of the following three options: $(\R, g, \S_{Sch}(\R))$, $([0,1], p^{\per}, \C^{\infty}_{per}([0,1]))$, or $([0,1], p^{\neu}, \C^{\infty}_{Neu}([0,1]))$. The corresponding semigroup is denoted by $P$.
\end{convention}

For measurable functions $f,g\colon D\to\R$ we write as usual
\begin{equation*}
\langle f,g\rangle:=\int_D f(x) g(x)\,dx.	
\end{equation*}

Fix now the length of the time interval $T>0$, a probability space $(\Omega,\F,\P)$ equipped with a complete filtration $(\F_t)$. We recall that a Gaussian process $W\colon L_2(D)\times[0,T]\times\Omega\to\R$ is called \textit{$(\F_t)$-space-time white noise} if for any $\phi,\psi\in L_2(D)$ the process $(W_t(\phi))_{t\in[0,T]}$ is an $(\F_t)$--Brownian motion and $\E W_s(\psi)W_t(\phi)=(s\wedge t)\int_D\phi(x)\psi(x)\,dx$, where $s,t\in[0,T]$.

We define now the stochastic convolution process (the solution to the linear stochastic heat equation)
\begin{equation}\label{def.V}
V_t(x):=\int_0^t\int_{\0}p_{t-r}(x,y)W(dr,dy),\quad t\ge0,\,x\in\0,
\end{equation}
where the integration in \eqref{def.V} is the Wiener integral, see, e.g., \cite[Section~1.2.4]{Marta}.
Recall the definition of the analytically mild solution.

\begin{definition}\label{Def:mildsol}
Let $b: \R \rightarrow \R$ be a measurable function and $u_0\in\Bsp(\0)$. A jointly continous adapted  process
$u\colon(0,T]\times\0\times\Omega\to\R$ is called an analytically \textit{mild solution of \eqref{SPDE} with the initial condition $u_0$} if for any $x\in\0$, $t \in [0,T]$ we have
\begin{align}
&\int_0^t\int_\0 p_{t-r}(x,y) |b|(u_r(y))dydr<\infty,\quad a.s.;\label{SPDEint}\\
 &u_t(x)=P_tu_0(x)+\int_0^t\int_\0 p_{t-r}(x,y) b(u_r(y))dydr + V_t(x),\quad a.s. \label{SPDEfunc}
\end{align}
\end{definition}

Sometimes this concept of solution is called a `random field' solution. If there is no risk of confusion, we will further sometimes refer to analytically mild solutions as just mild solutions.

\begin{remark}
When the function $b$ is unbounded it is not clear at all whether there exists a set of full probability  measure $\Omega'\subset \Omega$ such that the integral in \eqref{SPDEint} is finite 
on $\Omega'$  for all $x\in\0$, $t \in [0,T]$. Therefore, it is not obvious that there exists a universal set $\Omega'$, on which 
 identity \eqref{SPDEfunc} holds  for all $x\in\0$, $t \in [0,T]$.  Here we follow the classical definition of a mild solution, see, e.g., \cite[Definition~4.1.1]{Marta}, which allows the good set $\Omega'$ to depend on $t,x$. 
\end{remark}

Next, we consider a related notion of a solution for the case when $b$ is a generalized function in the Besov space $\Bes^\beta_{q}$, $\beta \in\R$, $q\in[1,\infty]$. This notion was introduced in \cite{ABLM24} motivated by the corresponding definition for SDEs \cite[Definition~2.1]{BC}. 

Let $f\in \Bes^\beta_q$, where $\beta\in\R$ and $q\in[1,\infty]$. We say that
a sequence of functions $(f_n)_{ n\in\Z_+}$, where $f_n\colon\R\to\R$,  converges to $f$ in
$\Bes^{\beta-}_q$ as $n\rightarrow \infty$ if
$\sup_{n\in\Z_+}\|f_n\|_{\Bes^\beta_q}<\infty$ and
\begin{equation*}
\lim_{n\rightarrow \infty} \| f_n-f\|_{\Bes^{\beta'}_q}=0,\quad \text{for any $\beta'<\beta$}.
\end{equation*}
It is clear that for any $f\in \Bes^\beta_q$, there is a sequence of
functions $(f_n)_{ n\in\Z_+} \subset \C^\infty_b$ such that
$f_n\rightarrow f$ in $\Bes^{\beta-}_q$ as $n\to\infty$, for example one can take $f_n:=G_{1/n} f$.

\begin{definition} \label{Def:mildrsol}
Let $\beta\in\R$, $q\in[1,\infty]$, $b\in\Bes_q^\beta$.   Let
$u_0\in\Bsp(\0)$. A jointly continuous adapted process
$u\colon(0,T]\times\0\times\Omega\to\R$ is called a \textit{mild regularized
    solution of \eqref{SPDE} with the initial condition $u_0$} if
  there exists a jointly continuous process $K\colon[0,T]\times\0\times\Omega\to\R$ such
  that
\begin{enumerate}
   \item[(1)] we have $\P$-a.s. 
   \begin{equation}\label{idmildid}
   u_t(x)=P_tu_0(x)+K_t(x)+V_t(x),\quad  x\in\0,\,\, t\in(0,T];
   \end{equation}  
           \item[(2)] for any sequence of functions $(b^n)_{n\in\Z_+}$ in
           $\C_b^\infty$ such that $b^n\to b$ in $\Bes_q^{\beta-}$ we
           have for any $N>0$ 
         \begin{equation}\label{idmildreg}
           \sup_{t\in[0,T]}\sup_\DN 
         \lt|\int_0^t\int_\0 p_{t-r}(x,y) b^n(u_r(y))\,dy\,dr-K_t(x)\rt|\to0\quad  \text{in probability as $n\to\infty$.}
         \end{equation}
\end{enumerate}
\end{definition} 
When $b\in\C^\beta$ with $\beta>0$, we can choose a sequence $(b_n)$
which converges to $b$  uniformly.  Then it is immediate that, in this case,
\cref{Def:mildrsol} is equivalent to the usual notion of a mild solution of
\eqref{SPDE} as in \cref{Def:mildsol}.

Our third definition of a solution is the classical weak solution. For $f\in L_2([0,T]\times D)$,  we put
\begin{equation*}
W_t(f) = \int_0^t\int_0^1 f(r,y) W(dr,dy),\quad t\in[0,T].
\end{equation*}


\begin{definition}\label{Def:weaksol}
Let $b: \R \rightarrow \R$ be a measurable function and $u_0\in\Bsp(\0)$. A jointly continuous adapted process $u\colon(0,T]\times\0\times\Omega\to\R$ is called an analytically {\em weak solution of } \eqref{SPDE} if for any $\phi \in \S(\0)$ we have $\P$-a.s. 
\begin{equation}\label{idweak}
\langle u_t,\phi\rangle= \langle u_0,\phi\rangle +\int_0^t \langle u_s, \frac{1}{2}\Delta\phi\rangle\,ds  +\int_0^t \langle b(u_s), \phi\rangle\,ds  +W_t(\phi),\qquad t\in[0,T].
\end{equation}
and all the integrals are assumed to be well-defined.
\end{definition}

We now introduce a new solution concept that extends the notion of  a weak solution of \eqref{SPDE} to the case when $b$ is a generalized function.

\begin{definition} \label{Def:weakrsol}
Let $\beta\in\R$, $q\in[1,\infty]$, $b\in\Bes_q^\beta$.   Let
$u_0\in\Bsp(\0)$. A jointly continuous adapted process
$u\colon(0,T]\times\0\times\Omega\to\R$ is called a \textit{ weak regularized solution  of  \eqref{SPDE}  with the initial condition $u_0$}
  if there exists a continuous in time process $\K\colon[0,T]\times\S(D)\times \Omega\to\R$  such that
\begin{enumerate}[(1)]
   \item for any $\phi\in\S(D)$ we have $\P$-a.s. 
   \begin{equation}\label{idweakreg}
    \langle u_t,\phi\rangle= \langle u_0,\phi\rangle +\int_0^t \langle u_s, \frac{1}{2}\Delta\phi\rangle\,ds  +\K_t(\phi)+W_t(\phi),\quad  t\in[0,T]; 
    \end{equation}
   \item for any sequence of functions $(b^n)_{n\in\Z_+}$ in $\C_b^\infty$ such that $b^n\to b$ in $\Bes_q^{\beta-}$
 we have for any  $\phi\in\S(D)$
\begin{equation}\label{Kcurlydef}
\sup_{t\in[0,T]}\lt|\int_0^t\int_\0 \phi(x) b^n(u_r(x))\,dx\,dr-\K_t(\phi)\rt|\to0\quad  \text{in probability as $n\to\infty$.}
\end{equation}
  \end{enumerate}
\end{definition}
\begin{remark}
If $\beta >0$  we can choose a sequence $(b^n)$ which converges to $b$  uniformly and the definition agrees with the usual notion of a weak solution as in Definition \ref{Def:weaksol}.  
\end{remark}

As it is standard in the analysis of SPDEs with distributional drift,  we restrict ourselves to solutions having certain regularity. We consider the following  class of processes initially introduced in \cite{ABLM24}. 
\begin{definition}\label{D:Def15}
Let $\kappa\in[0, 1]$. We say that a measurable adapted process $u\colon[0,T]\times\0\times\Omega\to\R$ belongs to the \textit{class $\V(\kappa)$} if for any $m\ge2$, $\sup_{(t,x)\in(0,T]\times\0}\|u_t(x)\|_{L_m(\Omega)}<\infty$ and 
\begin{equation}\label{kdefineq}
\sup_{0< s\le t\le T}\sup_{x\in\0}\frac{\|u_t(x)-V_t(x)-(P_{t-s}[u_s-V_s](x))\|_\Lm}{|t-s|^\kappa}<\infty.
\end{equation}
\end{definition}
\begin{remark}
If $u$ is a mild regularized solution to \eqref{SPDE}, then it can be decomposed as 
$$
u_t=P_tu_0+K_t+V_t,\quad t>0,
$$
 see \cref{Def:mildrsol}. In this case, the numerator in \eqref{kdefineq} is just  $\|K_t(x)-P_{t-s}K_s(x)\|_\Lm$. Thus, class $\V(\kappa)$ contains solutions of \eqref{SPDE} such that the moments of their drift satisfy certain regularity conditions.
\end{remark}

We are now ready to state our main results. The first result states  the equivalence of the notions of weak and mild regularized solutions.

\begin{theorem} 
 \label{thm:main0} Let $\alpha>-\frac{3}{2}$,  $b \in \C^\alpha$, $u_0\in\Bsp(D)$. Let  $u\colon[0,T]\times D\times\Omega\to\R$ be a measurable adapted process. Suppose that $u\in \V(5/8)$.  Then the following are equivalent:
 \begin{enumerate}[(a)]
  \item $u$ is a  weak regularized solution  to equation  \eqref{SPDE}. 
  \item  $u$ is  a mild regularized  solution to equation  \eqref{SPDE}.
 \end{enumerate}
\end{theorem}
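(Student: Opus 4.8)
The plan is to build a bridge at the level of the smooth approximations $b^n$ and then transport it to the limit in both directions. For each fixed $n$ the map $r\mapsto b^n(u_r(\cdot))$ is bounded (since $b^n\in\C^\infty_b$), so the regularized drift $K^n_t(x):=\int_0^t\int_D p_{t-r}(x,y)b^n(u_r(y))\,dy\,dr$ is a genuine pathwise mild solution of the inhomogeneous heat equation with source $b^n(u)$. First I would record the elementary deterministic identity
\[
\langle K^n_t,\phi\rangle=\int_0^t\langle K^n_s,\tfrac12\Delta\phi\rangle\,ds+\int_0^t\langle b^n(u_r),\phi\rangle\,dr,\qquad \phi\in\S(D),
\]
valid for every $n$ and every $\omega$. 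This follows from Fubini together with $\partial_s P_s=\frac12\Delta P_s$ and the self-adjointness of $P$ (for each of the three boundary conditions of \cref{c:conv}), by writing $\int_0^t\langle K^n_s,\frac12\Delta\phi\rangle\,ds=\int_0^t\langle b^n(u_r),P_{t-r}\phi-\phi\rangle\,dr$ and adding $\int_0^t\langle b^n(u_r),\phi\rangle\,dr$. The right-hand side is exactly the quantity appearing in \eqref{Kcurlydef}, while the left-hand side is the spatial pairing of the mild object, so this identity is the precise smooth-level link between \cref{Def:mildrsol} and \cref{Def:weakrsol}.

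Next I would collect the two linear identities that do not involve the drift: $\langle P_tu_0,\phi\rangle=\langle u_0,\phi\rangle+\int_0^t\langle P_su_0,\frac12\Delta\phi\rangle\,ds$ (weak form of the homogeneous heat flow) and $\langle V_t,\phi\rangle=\int_0^t\langle V_s,\frac12\Delta\phi\rangle\,ds+W_t(\phi)$ (the standard statement that the stochastic convolution \eqref{def.V} solves the linear equation analytically weakly). For the proof of (b)$\Rightarrow$(a) these assemble everything: given the mild structure \eqref{idmildid}, the hypothesis \eqref{idmildreg} yields $K^n\to K$ uniformly on compacts in probability, whence $\langle K^n_t,\phi\rangle\to\langle K_t,\phi\rangle$ and $\int_0^t\langle K^n_s,\frac12\Delta\phi\rangle\,ds\to\int_0^t\langle K_s,\frac12\Delta\phi\rangle\,ds$ (for $D=\R$ the tails are controlled by the uniform-in-$n$ $\Lm$ bounds on $K^n$ discussed below). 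Setting $\K_t(\phi):=\langle K_t,\phi\rangle-\int_0^t\langle K_s,\frac12\Delta\phi\rangle\,ds$, which is manifestly independent of the approximating sequence, the smooth identity forces $\int_0^t\langle b^n(u_r),\phi\rangle\,dr\to\K_t(\phi)$, i.e. \eqref{Kcurlydef}, and combining with the two linear identities produces \eqref{idweakreg}.

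For the converse (a)$\Rightarrow$(b) I cannot use \eqref{idmildreg} as an input, so I would instead invoke the a priori stochastic-sewing estimates of \cite{ABLM24,BM24}: for a process in $\V(5/8)$ with $b\in\C^\alpha$, $\alpha>-\frac32$, the drifts $K^n_t(x)$ are bounded in $\Lm$ uniformly in $n$, together with their increments $K^n_t(x)-P_{t-s}K^n_s(x)$, and converge uniformly on compacts in probability to a limit $\bar K$. This is the technical heart of the argument and the step I expect to be hardest, as it is exactly where $\alpha>-\frac32$ and the H\"older exponent $\kappa=5/8$ enter; the same bounds also supply the tail control used above. Granting this, $\bar K$ inherits the weak identity with source $\lim_n\int_0^t\langle b^n(u_r),\phi\rangle\,dr$, which by hypothesis (a) equals $\K_t(\phi)$; hence both $\bar K$ and $M_t:=u_t-P_tu_0-V_t$ satisfy $\langle\,\cdot_t,\phi\rangle=\int_0^t\langle\,\cdot_s,\frac12\Delta\phi\rangle\,ds+\K_t(\phi)$ with zero initial data. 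A uniqueness argument for this linear problem — testing the difference against $P_{t-s}\phi$ and letting $\phi$ exhaust $\S(D)$ — forces $M=\bar K$, so that $u_t=P_tu_0+\bar K_t+V_t$ with \eqref{idmildreg} holding for $\bar K$, which is precisely the mild regularized structure of \cref{Def:mildrsol}.
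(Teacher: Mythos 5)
Your proposal is correct in outline and reaches the same conclusion, but it is organized around a genuinely different bridge than the paper's. Your central device is the deterministic duality identity $\langle K^n_t,\phi\rangle-\int_0^t\langle K^n_s,\tfrac12\Delta\phi\rangle\,ds=\int_0^t\langle b^n(u_r),\phi\rangle\,dr$ at the level of the smoothed drifts, passed to the limit in $n$; the paper instead works with Riemann-sum approximations throughout (for (b)$\Rightarrow$(a), the telescoping sums $\sum_i\langle K_{i/n}-P_{1/n}K_{(i-1)/n},\phi\rangle$ of Lemma 4.10; for (a)$\Rightarrow$(b), the nonlinear integrals $\int_0^{t-\eps}\K(dr,p_{t-r}(x,\cdot))$ built in Lemmas 4.4--4.5 and identified with the limit drift through a three-term $\eps$-decomposition in Lemma 4.6). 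Your (a)$\Rightarrow$(b) step via ``both $M$ and $\bar K$ solve the same linear weak problem, hence coincide'' is an elegant repackaging of the paper's Lemma 4.5, and your reliance on the a priori sewing estimates for the existence and uniform moment bounds of $K^n$ is exactly the paper's Lemma 4.2, which you correctly flag as the technical heart. What your route buys is conceptual economy: the drift term never has to be re-derived in mild form by hand, only compared through the duality identity and a uniqueness statement. What the paper's route buys is explicit control of the two technical obstacles that your sketch defers. The first (tail control on $D=\R$ for the pairings $\langle K^n_t,\phi\rangle$) you acknowledge and it is indeed supplied by the uniform $\Lm$ bounds. The second you do not address: to ``test the difference against $P_{t-s}\phi$'' one must justify the time-dependent test-function calculus for the process $u$ itself, which on $D=\R$ is only assumed jointly continuous with no growth bound, so the dominating estimates needed to pass to the limit in the Riemann sums are not automatic. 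The paper resolves this with Mitoma's theorem (Lemma 4.4(i), giving $\sup_t|\la u_t,\phi\ra|\le\Gamma\|\phi\|_{\se}$ pathwise), and your argument needs the same input at that point; without it the uniqueness step for the linear problem is incomplete on the whole line. With that lemma imported, your proof closes.
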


We note that the assumption $\alpha>-\frac32$ and $u\in \V(5/8)$ are precisely the assumptions of  \cite[Theorem~2.6]{ABLM24}, which guarantees existense of a mild regularized solution to SPDE~\eqref{SPDE}.

We recall that the uniqueness theorem for equation \eqref{SPDE} \cite[Theorem~2.6]{ABLM24} is applicable to mild regularized solutions in the class $\V(3/4)$. Thus, in order to establish the existence and uniqueness of analytically mild solutions when $b \in L_p(\mathbb{R})$, $p \ge 1$, one must show that any analytically mild solution is a mild regularized solution and belongs to $\V(3/4)$. Similar problems appear in the proof of existence and uniqueness of analytically weak solutions to \eqref{SPDE}. These questions are addressed in the following theorem, which also shows the equivalence of all four notions of solutions when $b$ is an integrable function.

\begin{theorem}  \label{thm:main2} 
 Let $p\ge1$, $b\in L_p(\R)$, $u_0\in\Bsp(D)$.  Let  $u\colon[0,T]\times D\times\Omega\to\R$ be a measurable adapted process.
Then the following are equivalent:  

\begin{enumerate}[(a)]
\item $u$ is  an analytically  weak solution to  \eqref{SPDE}.
\item $u$ is a  weak regularized solution to \eqref{SPDE} in the class $\V(5/8)$.
\item $u$ is an analytically   mild solution to \eqref{SPDE}.
\item $u$ is  a mild regularized solution to \eqref{SPDE} in the class $\V(5/8)$.
\end{enumerate}
Moreover if any of (a)-(d) holds then $u\in \V(1-\frac1{4p})$.
\end{theorem}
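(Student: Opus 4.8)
The plan is to establish the three equivalences (a)$\Leftrightarrow$(b), (c)$\Leftrightarrow$(d) and (b)$\Leftrightarrow$(d), from which the mutual equivalence of all four notions follows, and to extract the regularity $u\in\V(1-\tfrac1{4p})$ along the way. The starting observation is the Besov embedding $L_p(\R)\hookrightarrow\Bes^0_{p,\infty}\hookrightarrow\C^{-1/p}$; since $-1/p\ge-1>-\tfrac32$ for every $p\ge1$, the hypotheses of \cref{thm:main0} are met with $\alpha=-1/p$, so (b)$\Leftrightarrow$(d) is immediate once we know $u\in\V(5/8)$. It then remains to connect each analytic notion with its regularized counterpart and to produce the claimed regularity.

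The workhorse will be a drift estimate of stochastic--sewing type: for a process of the form $u_t=P_tu_0+K_t+V_t$ obeying a mild a priori moment bound, and for $f\in L_p(\R)$,
\[
\Big\|\int_s^t\int_\0 p_{t-r}(x,y)\,f(u_r(y))\,dy\,dr\Big\|_{\Lm}\le C\,\|f\|_{L_p}\,|t-s|^{1-\frac1{4p}},
\]
together with its weak analogue in which $p_{t-r}(x,\cdot)$ is replaced by a test function $\phi\in\S(\0)$, and a companion version bounding the same quantity by $\|f\|_{\C^\gamma}|t-s|^{\theta(\gamma)}$ with $\theta(\gamma)>0$ for $\gamma>-\tfrac32$. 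The $L_p$ bound rests on the conditional Gaussian estimate $\E[\,|f(u_r(y))|\mid\F_s\,]\lesssim\|f\|_{L_p}(r-s)^{-1/(4p)}$: the increment of $V$ on $[s,r]$ has variance of order $(r-s)^{1/2}$, hence density of order $(r-s)^{-1/4}$ and $L_{p'}$-density norm of order $(r-s)^{-1/(4p)}$, while the drift increment only shifts the conditional mean; integrating this bound in $r$ produces the exponent $1-\tfrac1{4p}$. The $\C^\gamma$ version is precisely the estimate underlying \cite{ABLM24}. Applying the $L_p$ estimate to $f=b$ gives $\|K_t(x)-P_{t-s}K_s(x)\|_{\Lm}\lesssim\|b\|_{L_p}|t-s|^{1-1/(4p)}$, i.e. $u\in\V(1-\tfrac1{4p})\subset\V(5/8)$, which proves the final assertion and supplies the $\V(5/8)$ membership needed to invoke \cref{thm:main0}.

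With the estimate in hand, both (c)$\Leftrightarrow$(d) and (a)$\Leftrightarrow$(b) follow from a single approximation scheme. For the forward identification one takes the mollified sequence $b^n:=G_{1/n}b$, which converges to $b$ both in $L_p$ and in $\Bes^{0-}_p$; the $L_p$ estimate applied to $f=b^n-b$ shows the analytic drift integrals are finite a.s.\ (giving \eqref{SPDEint}) and that the regularized integrals converge to them, identifying $K_t(x)=\int_0^t\int_\0 p_{t-r}(x,y)b(u_r(y))\,dy\,dr$ and, in the weak case, $\K_t(\phi)=\int_0^t\langle b(u_s),\phi\rangle\,ds$. For the reverse direction one must verify the defining convergence \eqref{idmildreg} (resp.\ \eqref{Kcurlydef}) for \emph{every} sequence $b^n\to b$ in $\Bes^{0-}_p$; here the $\C^\gamma$ version of the estimate applied to $f=b^n-b$, with $\gamma=\beta'-\tfrac1p>-\tfrac32$ and $\|b^n-b\|_{\C^\gamma}\to0$, yields the required uniform--in--$(t,x)$ convergence in probability via a Kolmogorov argument using the positive time exponent $\theta(\gamma)$.

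The main obstacle will be the drift estimate itself, and in particular the sharp exponent $1-\tfrac1{4p}$ uniform in the moment order $m$. Two points need care. First, the conditional density bound must be made rigorous even though the drift increment on $[s,r]$ is not $\F_s$-measurable; this forces a short--time bootstrap (assume a provisional regularity $u\in\V(\kappa_0)$, derive the estimate, then upgrade to $\kappa=1-\tfrac1{4p}$), or alternatively a Girsanov change of measure removing the drift. Second, for a weak solution as in (a) one has no pointwise mild representation a priori, so establishing the pointwise $\V$ increment needed to reach (b) requires first extracting such a representation from the tested identity \eqref{idweak} — legitimate since $b\in L_p$ is a genuine function with integrable composition by the density bound — and this bookkeeping is the most delicate step. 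The endpoint $p=1$, where $\gamma$ approaches $-1$, still lies inside the admissible range $\gamma>-\tfrac32$, so it needs no separate treatment.
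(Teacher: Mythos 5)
Your overall architecture (splitting into (a)$\Leftrightarrow$(b), (c)$\Leftrightarrow$(d), and (b)$\Leftrightarrow$(d) via \cref{thm:main0}, identifying the drifts by mollifying $b$ and using the $\C^\gamma$-type estimate on $b^n-b$, and reading off the exponent $1-\tfrac1{4p}$ from $L_p\subset\C^{-1/p}$) agrees with the paper, and your heuristic for the exponent via the conditional Gaussian density is the correct one. The gap is in the workhorse estimate itself. You state it ``for a process of the form $u_t=P_tu_0+K_t+V_t$ obeying a mild a priori moment bound,'' but an analytically weak or mild solution in parts (a) and (c) comes with \emph{no} a priori moment bound and no a priori membership in any class $\V(\kappa_0)$ — not even $\sup_{t,x}\|u_t(x)\|_{L_m}<\infty$. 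Your proposed short-time bootstrap therefore has no base case: there is no provisional $\kappa_0$ to assume, and the Girsanov alternative needs an exponential integrability of $\int\!\!\int b(u)^2$ that is equally unavailable a priori. This is exactly the point the paper flags as the non-trivial part of the proof.

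The paper's resolution, which is the idea missing from your argument, is the stochastic sewing lemma with \emph{random} controls (\cref{lem:B.rcontrol}, used in \cref{L:metaml}): one takes as control the a.s.\ finite but possibly non-integrable quantity $w_{s,t}(x)=\int_s^t\int_\0 p_{t-r}(x,y)|b|(u_r(y))\,dy\,dr$ itself, and the sewing output splits the drift integral into $C\|b\|_{L_p}(t-s)^{\frac34-\frac1{4p}}P_{T-t}w_{s,t}(x)$ plus a remainder $L_{s,t}(x)$ with moments of order $|t-s|^{1-\frac1{4p}}$. Since the left-hand side \emph{is} $P_{T-t}w_{s,t}(x)$, for $t-s$ small the random term is absorbed into the left side (a buckling argument, see \eqref{idwst}), which converts qualitative a.s.\ finiteness into the quantitative moment bound and yields $u\in\V(1-\tfrac1{4p})$ with no hypotheses on $u$ beyond the pointwise mild identity for a.e.\ $x$. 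A second, smaller omission: extracting that pointwise identity from the tested identity \eqref{idweak} for a weak solution is done in the paper via Mitoma's theorem and time-dependent test functions $f_s=P_{t-s}\phi$ (\cref{l:phitd,l:txrepr,c:weak}); your justification for this step (``integrable composition by the density bound'') is circular, since the density bound is what you are trying to establish. Without the random-control sewing mechanism, the final assertion $u\in\V(1-\tfrac1{4p})$ and the implications (a)$\Rightarrow$(b), (c)$\Rightarrow$(d) remain unproved.
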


We emphasize that in parts (a) and (c) of the theorem, we do not assume that $u$ is in the class $\V(\kappa)$. In fact, the non-trivial part of the proof is to show that any weak or mild solution to \eqref{SPDE} automatically belongs to $\V(1-\frac1{4p})\subset \V(3/4)$. This is achieved using the stochastic sewing lemma with random controls \cite[Theorem~4.7]{ABLM24}.

\begin{corollary} 
 \label{thm:main1} Let $p\ge1$, $b\in L_p(\R)$, $u_0\in\Bsp(D)$. 
 Then equation  \eqref{SPDE} has a unique analytically  mild solution and  
a unique  analytically weak solution which coincide with each other. The solutions are strong in the probabilistic sense.
\end{corollary}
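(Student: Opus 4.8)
The plan is to deduce the corollary from \cref{thm:main2} together with the existence and uniqueness of strong mild regularized solutions furnished by \cite[Theorem~2.6]{ABLM24}. Since \cref{thm:main2} already identifies all four solution concepts when $b\in L_p(\R)$, the corollary reduces to combining this equivalence with the known existence theory and reconciling the regularity classes in which existence and uniqueness are stated.

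First I would cast the drift into the Besov framework of \cite{ABLM24}. Because $b\in L_p(\R)$ with $p\ge1$, the embedding $L_p\hookrightarrow\Bes^0_{p,\infty}=\Bes^0_p$ places $b$ in a Besov space with regularity $\alpha=0$ and integrability $q=p$, and the admissibility conditions $\alpha-\tfrac1q=-\tfrac1p\ge-1$ and $\alpha=0>-1$ then hold for every $p\ge1$. Consequently \cite[Theorem~2.6]{ABLM24} applies and yields a mild regularized solution $u$ which is strong in the probabilistic sense and lies in $\V(5/8)$. Invoking \cref{thm:main2}, this same $u$ is simultaneously an analytically mild, an analytically weak, and a weak regularized solution, which settles existence for all four notions at once; the ``moreover'' clause of \cref{thm:main2} upgrades its regularity to $u\in\V(1-\tfrac1{4p})\subset\V(3/4)$.

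For uniqueness I would argue separately for the two classical notions, each time passing to the regularized setting through \cref{thm:main2}. Let $v$ be an \emph{arbitrary} analytically mild solution, for which no a priori regularity is assumed. By the implication (c)$\Rightarrow$(d) of \cref{thm:main2}, $v$ is a mild regularized solution, and by the accompanying ``moreover'' statement $v\in\V(1-\tfrac1{4p})\subset\V(3/4)$. The uniqueness part of \cite[Theorem~2.6]{ABLM24}, which operates in the class $\V(3/4)$, then forces $v=u$. An identical argument applied to an arbitrary analytically weak solution via (a)$\Rightarrow$(d) gives uniqueness there as well, and the equivalences in \cref{thm:main2} ensure that the unique analytically mild and unique analytically weak solution are the very same process. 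Strongness in the probabilistic sense is inherited from the solution produced by \cite[Theorem~2.6]{ABLM24}.

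The only genuinely delicate point is the mismatch between the class $\V(5/8)$ in which existence is guaranteed and the class $\V(3/4)$ in which the uniqueness theorem of \cite{ABLM24} operates: a naive combination of the two would leave room for spurious competitors sitting in $\V(5/8)\setminus\V(3/4)$. The resolution, and the reason \cref{thm:main2} is indispensable rather than merely cosmetic here, is the automatic regularity improvement it supplies, showing that every analytically mild or weak solution — and in particular every candidate competitor — lies in $\V(1-\tfrac1{4p})$, which is contained in $\V(3/4)$ for all $p\ge1$. Once this upgrade is secured the existence and uniqueness classes coincide and the corollary follows.
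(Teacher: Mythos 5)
Your proposal is correct and follows essentially the same route as the paper: existence from \cite[Theorem~2.6]{ABLM24}, identification of the regularized solution with the analytically mild and weak notions via \cref{thm:main2}, and uniqueness via the automatic upgrade of any analytically mild or weak solution to the class $\V(1-\tfrac{1}{4p})\subset\V(\tfrac34)$ where the uniqueness clause of \cite[Theorem~2.6]{ABLM24} applies. Your explicit verification of the embedding $L_p\hookrightarrow\Bes^0_{p}$ and of the admissibility conditions of \cite[Theorem~2.6]{ABLM24}, as well as your discussion of the $\V(5/8)$ versus $\V(3/4)$ mismatch, only makes explicit what the paper leaves implicit.
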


\begin{remark}
We recall that the statements of \cref{thm:main0,thm:main2,thm:main1} are valid in three different settings: on a bounded domain with periodic boundary conditions, on a bounded domain with Neumann boundary conditions, and on $\R$, see \cref{c:conv}.
\end{remark}

Note that for $b \in L_p(\mathbb{R})$, the existence and uniqueness of analytically mild solutions were previously known only for $p = \infty$ \cite{GP93a}, and the existence and uniqueness of analytically weak solutions were established only for $p \ge 2$ \cite{GP93b}. \cref{thm:main1} improves these results, and shows the existence and uniqueness of both analytically mild and weak solutions for $p \ge 1$.

It is easy to show that if $u$ is an analytically weak solution to \eqref{SPDE}, then $u$ satisfies \cref{Def:mildsol} for Lebesgue a.e. $x\in D$. In particular, one can show that for any $t\in[0,T]$ for Lebesgue a.e. $x\in D$ one has 
\begin{equation*}
\int_0^t\int_\0 p_{t-r}(x,y) |b|(u_r(y))dydr<\infty,\quad a.s.
\end{equation*}
The challenging part is to show that for all $x\in D$ this integral is actually $\P$-a.s. finite.

\begin{remark}
	For the case $D = \R$, additional assumptions are sometimes imposed on the class of solutions. For example, \cite[Theorem~2.1]{SH94} requires that for any $t \geq 0$, we have $u_t \in \C_{tem}(\R)$ (the space of functions that grow slower than $\exp(\lambda |x|)$ for any $\lambda > 0$). A careful application of Mitoma's theorem \cite{mitoma} allows us to remove this restriction, see the proof of \cref{l:phitd}. 
\end{remark}



The rest of the paper is organized as follows. We place a number of auxiliary technical lemmas in \cref{s:3}. The proofs of the main results are given in \cref{s:4}.

\section{Sewing lemmas and integral bounds}\label{s:3}

First, we set up some necessary notation.
For $0\le S<T$ we denote by $\Delta_{[S,T]}$ the simplex
$$
\Delta_{[S,T]}:=\{(s,t)\in[0,T]^2: S\le s\le t \le T\}.
$$
Let $(\Omega,\F, (\F_t)_{t\ge0}, \P)$ be a complete filtered probability space on which the 
the processes below are defined. We will write $\E^s$ for the conditional expectation given $\F_s$
$$
\E^s[\cdot]:=\E[\cdot|\F_s],\quad s\ge0.
$$

Let $0\le S\le T$. Let $f\colon[S,T]\times\0\times \Omega\to\R$ be a measurable function. For $\tau\in(0,1]$, $m\ge1$ define
\begin{align*}
		&\|f\|_{\Ctimespacezerom{[S,T]}}:=\sup_{t\in[S,T]}\sup_{x\in D}\|f(t,x)\|_{L_m(\Omega)};\\
		&[f]_{\Ctimespace{\tau}{m}{[S,T]}}:=\sup_{(s,t)\in\Delta_{[S,T]}}\sup_{x\in D} \frac{\|f_t(x)-P_{t-s}f_s(x)\|_{L_m(\Omega)}}{|t-s|^\tau}.
\end{align*}

Recall the statement of stochastic sewing lemma with random controls.

\begin{definition}[{\cite[Definition~4.6]{ABLM24}}]\label{def.control}
Let $T>0$, $\lambda$ be a measurable function $\Delta_{[0,T]}\times \Omega\to \R_+$. We say that  $\lambda$ is a \textit{random control} if for any $0\le s\le u \le t\le T$ one has
	$$
	\lambda(s,u,\omega)+\lambda(u,t,\omega)\le \lambda(s,t,\omega)\quad \text{a.s.}
	$$
\end{definition}

\begin{proposition}[{\cite[Theorem~4.7]{ABLM24}}]\label{lem:B.rcontrol}
Let $m\in[2,\infty)$. Let $\lambda$ be a random control. Assume that there exist constants $\Gamma_1,\Gamma_2, \alpha_1,\alpha_2, \beta_2, \ge0$,  such that $\alpha_1>\frac12$, $\alpha_2+\beta_2>1$ and
\begin{align}
&\|\delta A_{s,u,t}\|_{L_m(\Omega)}\le \Gamma_1|t-s|^{\alpha_1};\label{con:wei.dA2}\\
&|\E^u \delta A_{s,u,t}|\le \Gamma_2|t-s|^{\alpha_2}\lambda(s,t)^{\beta_2}\quad\text{a.s.}\label{Eudeltaastbound}
\end{align}
for every $(s,t)\in\Delta_{[0,T]}$ and $u:=(s+t)/2$.
Further, suppose that there exists a process $\A=\{\A_t:t\in[0,T]\}$ such that for any $t\in[0,T]$ and any sequence of partitions $\Pi_N:=\{0=t^N_0,...,t^N_{k(N)}=t\}$ of $[0,t]$ such that $\lim_{N\to\infty}\Di{\Pi_N}\to0$  one has
	\begin{equation}\label{limpart}
	\A_t=\lim_{N\to\infty} \sum_{i=0}^{k(N)-1} A_{t^N_i,t^N_{i+1}}\,\, \text{in probability.}
	\end{equation}

Then there exists a measurable function  $B\colon\Delta_{[0,T]}\times\Omega\to \R_+$ and a constant $C=C(\alpha_1,\alpha_2,\beta_2, m)>0$, such that for every $(s,t)\in\Delta_{[0,T]}$
\begin{align*}
|\A_t-\A_s-A_{s,t}|&\le C\Gamma_2 |t-s|^{\alpha_2}\lambda(s,t)^{\beta_2}+ B_{s,t}\quad\text{a.s.};\\
\|B_{s,t}\|_{L_m}&\le C\Gamma_1 |t-s|^{\alpha_1}.
\end{align*}
\end{proposition}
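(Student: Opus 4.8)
The plan is to reprove the bound by the dyadic stochastic sewing argument, splitting the increment $\delta A_{s,u,t}:=A_{s,t}-A_{s,u}-A_{u,t}$ into a conditional-expectation (``drift'') part and a ``martingale'' part, the genuinely new point being the \emph{pathwise} summation of the drift part against the random control $\lambda$. Fix $(s,t)\in\Delta_{[0,T]}$, let $\Pi^n=\{s=t^n_0<\cdots<t^n_{2^n}=t\}$ be the dyadic partition of $[s,t]$, write $m^k_i$ for the midpoint of $[t^k_i,t^k_{i+1}]$, and set $A^n_{s,t}:=\sum_{i=0}^{2^n-1}A_{t^n_i,t^n_{i+1}}$. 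Refining one level at a time gives $A^{k+1}_{s,t}-A^k_{s,t}=-\sum_i\delta A_{t^k_i,m^k_i,t^k_{i+1}}$, so that, since $A^0_{s,t}=A_{s,t}$,
\[
A^n_{s,t}-A_{s,t}=-\sum_{k=0}^{n-1}\sum_i\delta A_{t^k_i,m^k_i,t^k_{i+1}}.
\]
Applying the hypothesis on $\A$ to dyadically refining partitions of $[0,t]$ that contain the point $s$ (together with the same statement on $[0,s]$) yields $\A_t-\A_s=\lim_nA^n_{s,t}$ in probability, so it suffices to bound $R:=\A_t-\A_s-A_{s,t}=\lim_n(A^n_{s,t}-A_{s,t})$.

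Next I would split each increment at its midpoint, $\delta A_{t^k_i,m^k_i,t^k_{i+1}}=J^k_i+M^k_i$, with $J^k_i:=\E^{m^k_i}\delta A_{t^k_i,m^k_i,t^k_{i+1}}$ and $M^k_i:=\delta A_{t^k_i,m^k_i,t^k_{i+1}}-J^k_i$, and estimate the two resulting double sums separately. For the martingale part, the key point is that along each fixed level $k$ the family $(M^k_i)_i$ is a sequence of martingale differences for the filtration $(\F_{t^k_{i+1}})_i$: indeed $M^k_i$ is $\F_{t^k_{i+1}}$-measurable by adaptedness of $A$, while $\E^{m^k_i}M^k_i=0$ together with $t^k_i\le m^k_i$ and the tower property give $\E[M^k_i\mid\F_{t^k_i}]=0$. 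The Burkholder--Davis--Gundy inequality, the triangle inequality in $L_{m/2}$, and the contraction bound $\|M^k_i\|_{L_m}\le2\|\delta A_{t^k_i,m^k_i,t^k_{i+1}}\|_{L_m}\le2\Gamma_1(2^{-k}|t-s|)^{\alpha_1}$ coming from \eqref{con:wei.dA2} then yield
\[
\Big\|\sum_i M^k_i\Big\|_{L_m}\le C_m\Gamma_1|t-s|^{\alpha_1}\,2^{-k(\alpha_1-1/2)}.
\]
Since $\alpha_1>\frac12$ this is summable in $k$, so $\sum_k\sum_iM^k_i$ converges in $L_m$, and setting $B_{s,t}:=\big|\sum_k\sum_iM^k_i\big|$ gives $\|B_{s,t}\|_{L_m}\le C\Gamma_1|t-s|^{\alpha_1}$.

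For the drift part I would use the pathwise bound \eqref{Eudeltaastbound}: since each level-$k$ interval has length $2^{-k}|t-s|$, almost surely $\sum_i|J^k_i|\le\Gamma_2(2^{-k}|t-s|)^{\alpha_2}\sum_i\lambda(t^k_i,t^k_{i+1})^{\beta_2}$. By superadditivity of $\lambda$ one has $\sum_i\lambda(t^k_i,t^k_{i+1})\le\lambda(s,t)$, and combining this with Hölder's inequality (when $\beta_2\le1$) gives $\sum_i\lambda(t^k_i,t^k_{i+1})^{\beta_2}\le2^{k(1-\beta_2)}\lambda(s,t)^{\beta_2}$, whence $\sum_i|J^k_i|\le\Gamma_2|t-s|^{\alpha_2}\lambda(s,t)^{\beta_2}\,2^{-k(\alpha_2+\beta_2-1)}$; as $\alpha_2+\beta_2>1$, summation over $k$ gives $\sum_k\sum_i|J^k_i|\le C\Gamma_2|t-s|^{\alpha_2}\lambda(s,t)^{\beta_2}$ almost surely. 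Combining the two parts and identifying the limit, $|R|\le\sum_k\sum_i|J^k_i|+\big|\sum_k\sum_iM^k_i\big|\le C\Gamma_2|t-s|^{\alpha_2}\lambda(s,t)^{\beta_2}+B_{s,t}$, which is the asserted estimate.

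The main obstacle is precisely the randomness of the control: unlike the classical stochastic sewing lemma, where the drift bound is an $L_m$-bound and one sums after taking expectations, here \eqref{Eudeltaastbound} holds only almost surely, so the summation against $\lambda$ must be performed $\omega$ by $\omega$, and recovering the sharp power $\lambda(s,t)^{\beta_2}$ (rather than a cruder one) is exactly where superadditivity of $\lambda$ enters and forces $\alpha_2+\beta_2>1$. The one delicate case is $\alpha_2=0$ (hence necessarily $\beta_2>1$), in which the plain dyadic scheme gains no factor in $k$; there I would replace dyadic refinement by a greedy coarse-graining that at each step removes the interior partition point carrying the least control increment, which by superadditivity and a pigeonhole argument restores summability under $\alpha_2+\beta_2>1$. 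The remaining bookkeeping---matching $R$ with $\A_t-\A_s-A_{s,t}$ and verifying that both series converge (absolutely a.s.\ for the drift part, in $L_m$ for the martingale part)---is routine.
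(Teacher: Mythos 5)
First, a point of comparison: the paper does not actually prove this proposition --- it is quoted verbatim from \cite[Theorem~4.7]{ABLM24} --- so there is no internal proof to measure your argument against. Judged on its own, your dyadic scheme is the standard stochastic-sewing-with-controls argument and is sound in the regime that matters for this paper: the telescoping identity, the midpoint splitting into drift and martingale parts, the BDG--Minkowski bound $\|\sum_i M^k_i\|_{L_m}\le C_m(\sum_i\|M^k_i\|_{L_m}^2)^{1/2}$ (valid since $m\ge2$) yielding the geometric factor $2^{-k(\alpha_1-1/2)}$, and the pathwise summation of the drift part against $\lambda$ are all correct. Two small repairs: the a.s.\ superadditivity of $\lambda$ must be upgraded to a single null set valid for all (countably many) dyadic triples before you sum pathwise; and for $\beta_2>1$ your displayed H\"older inequality goes the wrong way --- use instead $\sum_i\lambda_i^{\beta_2}\le(\sum_i\lambda_i)^{\beta_2}\le\lambda(s,t)^{\beta_2}$, which still gives the summable factor $2^{-k\alpha_2}$ provided $\alpha_2>0$. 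Note that in the only place this paper invokes the proposition (the proof of \cref{L:metaml}) one has $\alpha_2=\frac{3+\gamma}{4}>0$ and $\beta_2=1$, so your argument fully covers what is actually used here.

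The genuine gap is the corner case $\alpha_2=0$, $\beta_2>1$, which the statement permits. There the dyadic scheme gives no decay in $k$: for the additive control $\lambda(s,t)=\1(s<c\le t)$ one has $\sum_i\lambda(t^k_i,t^k_{i+1})^{\beta_2}=1$ at every level, so $\sum_k\sum_i|J^k_i|$ diverges and no sharpening of the H\"older step can rescue it. Your proposed fix --- greedy removal of the interior partition point carrying the least control increment --- is the right classical idea for controlled sewing, but it is incompatible with the hypotheses as stated: deleting a point $t_i$ produces the increment $\delta A_{t_{i-1},t_i,t_{i+1}}$ with $t_i$ generically \emph{not} the midpoint of $[t_{i-1},t_{i+1}]$, whereas \eqref{con:wei.dA2}--\eqref{Eudeltaastbound} are assumed only for $u=(s+t)/2$. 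So either the hypotheses must be strengthened to all $u\in[s,t]$, or a different combinatorial scheme adapted to midpoints is needed; as written, your sketch does not close this case. Finally, the measurability of $(s,t)\mapsto B_{s,t}$ asserted in the conclusion is left unaddressed, though it follows routinely from your construction.
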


We use the stochastic sewing lemma with random controls in order to bound integrals of a perturbation of the stochastic convolution $V$ under very loose assumptions on the perturbation. In particular, we do not assume that this perturbation has finite moments or any regularity.

\begin{lemma}\label{L:metaml}
Let $\gamma\in(-2,0)$, $T\in(0,1]$. Let $f:\R\to\R$ be a  measurable function. Suppose further that $f$ is bounded   or $f\ge0$. Let  $\psi\colon[0,T]\times\0\times\Omega\to\R$ be a measurable function adapted to the filtration $\{\F_t\}$. 
Assume that there exists a nonnegative stochastic process $w\colon\Delta_{[0,T]}\times\0\times\Omega\to[0,\infty)$ with the following properties: for any $t\in[0,T]$ there exists a set $D_t\subset D$ with  $\Leb(D\setminus D_t)=0$ such that  for any $(s,u)\in\Delta_{[0,t]}$,  $x\in D_t$
\begin{align}\label{wdef1}
&|\psi_t(x)-P_{t-s}\psi_s(x)|\le w_{s,t}(x)\quad  \text{$\P$-a.s.};\\
& P_{t-u}w_{s,u}(x)+w_{u,t}(x)\le w_{s,t}(x)\quad \text{$\P$-a.s.} \label{wdef2}
\end{align}
  Then there exists a nonnegative stochastic process $L\colon\Delta_{[0,T]}\times D\times\Omega\to[0,\infty)$ such that  
  for any $(s,t)\in\Delta_{[0,T]}$, $x\in D_T$
  \begin{equation}\label{Boundrc}
  \Bigl|\int_s^t\int_\0 p_{T-r}(x,y) f(V_r(y)+\psi_r(y))\,dydr\Bigr|\le 
  C\|f\|_{\C^\gamma}(t-s)^{\frac{3+\gamma}4}P_{T-t}w_{s,t}(x)+L_{s,t}(x),
  \end{equation}
  where $C=C(\gamma)$. 
  Further,  for any  $m\in[2,\infty)$, there exists a constant $C=C(\gamma,m)>0$ such that for any $x\in D_T$
\begin{equation}\label{Boundlst}
\|L_{s,t}(x)\|_{\Lm}\le C\|f\|_{\C^\gamma}|t-s|^{1+\frac\gamma4}.
\end{equation}
\end{lemma}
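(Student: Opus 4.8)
The plan is to apply the stochastic sewing lemma with random controls (\cref{lem:B.rcontrol}) to the germ/increment process

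\begin{equation*}
A_{s,t}(x):=\int_s^t\int_\0 p_{T-r}(x,y)\,P_{r-s}\big[f(V_{\cdot}+\psi_s)\big]_r(y)\,dy\,dr,
\end{equation*}

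or more precisely to the frozen version where the nonlinear argument $V_r(y)+\psi_r(y)$ is replaced by its conditionally-Gaussian surrogate $V_r(y)-P_{r-s}V_s(y)+P_{r-s}\psi_s(y)$ built from the increment of $V$ after time $s$. The point of freezing $\psi$ at time $s$ is that, conditionally on $\F_s$, the field $V_r(y)-P_{r-s}V_s(y)$ is a centered Gaussian whose one-point variance behaves like $(r-s)^{1/2}$, so that $f$ evaluated on it can be estimated by the heat-kernel smoothing bound for Besov-negative distributions: for $f\in\C^\gamma$ one has $\|P_{\sigma}f\|_\infty\lesssim \|f\|_{\C^\gamma}\sigma^{\gamma/2}$, and convolving against a Gaussian of variance $\sim(r-s)^{1/2}$ produces the key factor $(r-s)^{\gamma/4}$. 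This is exactly where the exponent $\tfrac{3+\gamma}4=1+\tfrac{\gamma}4-\tfrac14+\ldots$ in \eqref{Boundrc} and the exponent $1+\tfrac\gamma4$ in \eqref{Boundlst} originate.

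\textbf{Verifying the sewing hypotheses.} First I would check \eqref{con:wei.dA2}: the second-moment bound $\|\delta A_{s,u,t}\|_{L_m}\le\Gamma_1|t-s|^{\alpha_1}$ with $\alpha_1>\tfrac12$. Writing $\delta A_{s,u,t}=A_{s,t}-A_{s,u}-A_{u,t}$, the difference measures the error from re-freezing $\psi$ and $V$ at time $u$ instead of $s$ on the interval $[u,t]$; using the regularity of the heat semigroup together with the Gaussian moment bounds on increments of $V$, this difference carries an extra half-power of $(t-s)$ beyond the naive scaling, giving $\alpha_1=1+\tfrac\gamma4>\tfrac12$ since $\gamma>-2$. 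Next, for \eqref{Eudeltaastbound}, I would estimate the conditional expectation $|\E^u\delta A_{s,u,t}|$: the mean-zero contributions of the fresh Gaussian increment after time $u$ are killed by $\E^u$, and what remains is controlled pathwise by the running bound $w$ through \eqref{wdef1}--\eqref{wdef2}. This is where the random control enters: one sets $\lambda(s,t):=\sup_x P_{T-t}w_{s,t}(x)$ (or a suitable integrated version), which is superadditive precisely because of \eqref{wdef2}, and one reads off $\alpha_2,\beta_2$ with $\alpha_2+\beta_2>1$. Finally, the limit-of-partitions hypothesis \eqref{limpart} identifies $\A_t$ with the genuine integral $\int_0^t\int_\0 p_{T-r}(x,y)f(V_r(y)+\psi_r(y))\,dy\,dr$, since refining the partition sends each frozen germ to the true integrand by continuity.

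\textbf{Assembling the conclusion.} Once the hypotheses hold, \cref{lem:B.rcontrol} yields a decomposition $|\A_t-\A_s-A_{s,t}|\le C\Gamma_2|t-s|^{\alpha_2}\lambda(s,t)^{\beta_2}+B_{s,t}$ with $\|B_{s,t}\|_{L_m}\le C\Gamma_1|t-s|^{\alpha_1}$. Absorbing the germ $A_{s,t}$ itself (which is pathwise dominated via $w$) into the control term, I set $L_{s,t}(x):=B_{s,t}(x)$ plus the germ remainder; the control term of the form $C\|f\|_{\C^\gamma}(t-s)^{(3+\gamma)/4}P_{T-t}w_{s,t}(x)$ matches the first summand on the right of \eqref{Boundrc}, while $\alpha_1=1+\tfrac\gamma4$ gives \eqref{Boundlst}. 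I expect \textbf{the main obstacle} to be the conditional estimate \eqref{Eudeltaastbound}: one must show that the expected increment is controlled by the \emph{pathwise} control $w$ without assuming any integrability or regularity of $\psi$ — the whole design of the lemma is to require nothing of $\psi$ beyond \eqref{wdef1}--\eqref{wdef2}. The delicate point is that $\psi$ may be wildly irregular, so all the quantitative smoothing must come from the Gaussian part $V$, and the $\psi$-dependence must be isolated into the superadditive control $w$; checking the sign constraint (using either $f\ge0$ or $f$ bounded) to make the Gaussian density computations legitimate, and verifying that freezing errors genuinely produce the claimed extra power of $(t-s)$, is the technical heart of the argument.
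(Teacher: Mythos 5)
Your overall strategy --- stochastic sewing with the random control built from $w$, freezing $\psi$ at the left endpoint, and extracting the Gaussian smoothing factor $(r-u)^{(\gamma-1)/4}$ from the conditional expectation --- is indeed the paper's Step~1. However, there are two concrete problems. First, the germ you write down is wrong: if the argument of $f$ is replaced by $V_r(y)-P_{r-s}V_s(y)+P_{r-s}\psi_s(y)$, then as the partition refines the frozen argument tends to $\psi_r(y)$, not to $V_r(y)+\psi_r(y)$, so the Riemann sums in \eqref{limpart} would identify $\A_t$ with $\int_0^t\int_D p_{T-r}(x,y)f(\psi_r(y))\,dy\,dr$ rather than with the integral in \eqref{Boundrc}. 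The correct germ keeps $V$ un-frozen and freezes only $\psi$: $A_{s,t}(x)=\int_s^t\int_D p_{T-r}(x,y)f(V_r(y)+P_{r-s}\psi_s(y))\,dy\,dr$; the conditional-Gaussian decomposition of $V_r$ given $\F_u$ is used only inside the estimate of $\E^u\delta A_{s,u,t}$, not in the definition of the germ. Also note the paper's control is the pointwise $\lambda_{s,t}(x)=P_{T-t}w_{s,t}(x)$ for each fixed $x\in D_T$, not a supremum over $x$ (which need not be finite and whose superadditivity is not guaranteed by \eqref{wdef2}).

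Second, and more substantially, the sewing argument only runs for $f\in\C^1$: the verification of \eqref{limpart} and the conditional bound both use $\|f\|_{\C^1}$ and $\|G_\rho f\|_{\C^1}$, and the $L_m$ bound on the germ with \emph{no} moment assumption on $\psi$ requires truncating $\psi$ at level $j$ and passing to the limit by monotone and dominated convergence. Your proposal says nothing about how to pass from $f\in\C^1$ to a general measurable $f$ that is merely bounded or nonnegative, which is where the hypotheses ``$f$ bounded or $f\ge0$'' actually enter (not in any ``Gaussian density computation''). The paper needs a four-stage approximation ladder for this: mollification and Fatou for bounded continuous $f$; Urysohn's lemma for indicators of open sets; Lusin's theorem plus the $L_1\subset\C^{-1}$ embedding to control the exceptional set for bounded measurable $f$; and monotone truncation $f\wedge N$ for nonnegative $f$. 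Without this ladder the lemma is only proved for $\C^1$ drifts, which is far short of the statement. So the proposal captures the core sewing step but, as written, both misidentifies the germ and omits roughly half of the actual proof.
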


\begin{proof}

\textit{Step 1.} Let $f\in\C^1(\R)$. Fix $x\in D_T$, $T\in(0,1]$. 
We would like to apply stochastic sewing lemma with random controls (\cref{lem:B.rcontrol}). For $(s,t)\in\Delta_{[0,T]}$ set
\begin{align*}
&A_{s,t}(x):=\int_s^t\int_\0 p_{T-r}(x,y) f(V_r(y)+P_{r-s}\psi_s(y))\,dydr;\\
&\A_{t}(x):=\int_0^t\int_\0 p_{T-r}(x,y) f(V_r(y)+\psi_r(y))\,dydr.
\end{align*}
Let us verify that all the conditions of \cref{lem:B.rcontrol}
are satisfied for the random control
$$
\lambda_{s,t}(x):=P_{T-t}w_{s,t}(x),\quad (s,t)\in\Delta_{[0,T]}.
$$
Note that $\lambda$ is indeed a random control since for $0\le s\le u \le t\le T$  we have
\begin{equation}\label{eqae}
\lambda_{s,u}(x)+\lambda_{u,t}(x)=P_{T-u}w_{s,u}(x)+P_{T-t}w_{u,t}(x)
=P_{T-t}[P_{t-u}w_{s,u}+w_{u,t}](x).
\end{equation}
If $t<T$, then by \eqref{wdef2}, $P_{t-u}w_{s,u}(y)+w_{u,t}(y)\le w_{s,t}(y)$ for Lebesgue a.e. $y\in D$. Therefore,  $P_{T-t}[P_{t-u}w_{s,u}+w_{u,t}](x)\le P_{T-t}w_{s,t}(x)$. If $t=T$, then since $x\in D_T$ we have $P_{T-u}w_{s,u}(x)+w_{u,T}(x)\le w_{s,T}(x)$ almost surely. Thus, in both cases 
\eqref{eqae} implies that for $0\le s\le u \le t\le T$
\begin{equation*}
\lambda_{s,u}(x)+\lambda_{u,t}(x)\le P_{T-t}w_{s,t}(x)=\lambda_{s,t}(x),\quad \P-a.s.,
\end{equation*}
and therefore $\lambda$ is a random control, see \cref{def.control}.  

Let $(s,t)\in\Delta_{[0,T]}$, let $u:=(s+t)/2$. Then we have
$$\delta A_{s,u,t}(x)=\int_u^t\int_\0 p_{T-r}(x,y)[f(V_r(y)+P_{r-s}\psi_s(y))-
f(V_r(y)+P_{r-u}\psi_u(y))] \,dydr.
$$
Applying consequently the Fubini theorem and \cite[(C.7), Lemma~A.3(iv) and Lemma~C.3]{ABLM24}, we deduce that
\begin{align*}
  |\E^u \delta A_{s,u, t}(x)|&=\Bigl|\int_u^t\int_\0 p_{T-r}(x,y) \E^u[f(V_r(y)+P_{r-s}\psi_s(y))-
  f(V_r(y)+P_{r-u}\psi_u(y))]\,dydr\Bigr|\\
  &\le\int_u^t\int_\0 p_{T-r}(x,y)\|G_{\varz_{r-u}(x)}f\|_{\C^1}|P_{r-s}\psi_s(y)- P_{r-u}\psi_u(y)|\,dydr\\
  &\le C\|f\|_{\C^\gamma} \int_u^t\int_\0 p_{T-r}(x,y) |P_{r-s}\psi_s(y)- P_{r-u}\psi_u(y)| (r-u)^{(\gamma-1)/4}\,dydr,
\end{align*}
for $C=C(\gamma)$,
where we used the notation $\varz_t(x):=\Var(V_t(x))$ and recall that $G$ denotes the Gaussian semigroup. Since $r-u>0$, assumption \eqref{wdef1} implies for any $y\in D$
\begin{equation*}
|P_{r-u}\psi_u(y)-P_{r-s}\psi_s(y)|\le P_{r-u}|\psi_u(\cdot)-P_{u-s}\psi_s(\cdot)|(y)\le
P_{r-u}w_{s,u}(y).
\end{equation*}
Therefore, we get
\begin{align*}
|\E^u \delta A_{s,u,t}(x)|&\le C\|f\|_{\C^\gamma} \int_u^t\int_\0 p_{T-u}(x,y) w_{s,u}(y) (r-u)^{(\gamma-1)/4}\,dydr
\\&\le C\|f\|_{\C^\gamma}\lambda_{s,u}(x)(t-u)^{(\frac34+\frac\gamma4)},
\end{align*}
for $C=C(\gamma)$. 
As we have shown previously, $(s,t)\mapsto \lambda_{s,t}(x)$ is a random control. Hence, the above estimate verifies \eqref{Eudeltaastbound}.

For each integer $j\ge1$, define $\psi^{j,+}_t(x):=(\psi_t(x)\wedge j)\vee0$, $\psi^{j,-}_t(x):=((-\psi_t(x))\wedge j)\vee0$, $\psi^{j}_t(x):=
\psi^{j,+}_t(x)-\psi^{j,-}_t(x)$, $t\in[0,T]$, $x\in\0$. Clearly, $\|\psi^{j}\|_{\C^0 L_m([0,T])}<\infty$. By \cite[Lemma~5.1]{ABLM24}, we have for any $(s,t)\in\Delta_{[0,T]}$, $x\in\0$
\begin{equation*}
  \Bigl\|\int_s^t\int_\0 p_{T-r}(x,y) f(V_r(y)+P_{r-s}\psi^j_s(y))\,dydr\Bigr\|_{L_m(\Omega)}\le C\|f\|_{\C^\gamma}|t-s|^{1+\frac\gamma4}
\end{equation*}
for $C=C(\gamma)>0$. By the Lebesgue monotone convergence theorem, we have 
$$
\lim_{j\to\infty}P_{r-s}\psi^{j,+}_s(y)=P_{r-s}\psi^+_s(y),\quad 
\lim_{j\to\infty}P_{r-s}\psi^{j,-}_s(y)=P_{r-s}\psi^-_s(y)
$$
 for 
every $r,s,y$. Then by the Lebesgue dominated convergence theorem, we see that 
\begin{equation}\label{tmp.1107}
  \|A_{s,t}(x)\|_{L_m}=\Bigl\|\int_s^t\int_\0 p_{T-r}(x,y) f(V_r(y)+P_{r-s}\psi_s(y))\,dydr\Bigr\|_{L_m}\le C\|f\|_{\C^\gamma}|t-s|^{1+\frac\gamma4}.
\end{equation}
This estimate verifies assumption \eqref{con:wei.dA2}.

Finally, it remains to verify condition \eqref{limpart}. Fix $t\in[0,T[$ and let $\Pi:=\{0=t_0,t_1,...,t_k=t\}$ be an arbitrary partition of $[0,t]$. Denote by $\Di{\Pi}$ its mesh size. Then we have
\begin{align*}
  &\Bigl|\A_t-\sum_{i=0}^{k-1} A_{t_i,t_{i+1}}\Bigr|\\
  &\quad\le \sum_{i=0}^{k-1} \int_{t_i}^{t_{i+1}} \!\!\!\int_\0 p_{T-r}(x,y) \bigl|f(V_r(y)+\psi_r(y))- f(V_r(y)+P_{r-{t_i}}\psi_{t_i}(y))\bigr|\,dy dr\\
  &\quad\le \|f\|_{\C^1}\sum_{i=0}^{k-1} 
  \int_{t_i}^{t_{i+1}} \!\!\!\int_\0 p_{T-r}(x,y) |\psi_r(y)-P_{r-{t_i}}\psi_{t_i}(y)|\,dy dr\\
  &\quad\le \|f\|_{\C^1}\sum_{i=0}^{k-1} 
  \int_{t_i}^{t_{i+1}} \!\!\! \lambda_{t_i,r}(x)  dr\\
  &\quad\le \|f\|_{\C^1}\sum_{i=0}^{k-1} 
  \int_{t_i}^{t_{i+1}} \!\!\! \lambda_{t_i,t_{i+1}}(x)  dr\\
  &\quad\le \|f\|_{\C^1}\lambda_{0,t}(x)|\Pi|,
\end{align*}
where in the third inequality we used \eqref{wdef1} and that $T-r>0$, and in the last two inequalities we have used the fact that $\lambda$ is a control. Thus,  condition \eqref{limpart} holds.

Thus all the conditions of \cref{lem:B.rcontrol}
 are satisfied. Hence we have for any $(s,t)\in\Delta_{[0,T]}$, $x\in D_T$
\begin{align*}
&\Bigl|\int_s^t\int_\0 p_{T-r}(x,y) f(V_r(y)+\psi_r(y))\,dydr\Bigr|\\ 
&\quad=|\A_t(x)-\A_s(x)|\\
&\quad\le 
C\|f\|_{\C^\gamma}\lambda_{s,t}(x)(t-s)^{\frac34+\frac\gamma4}+B_{s,t}(x)+|A_{s,t}(x)|
\end{align*}
where $\|B_{s,t}(x)\|_{\Lm}\le C\|f\|_{\C^\gamma}|t-s|^{1+\frac\gamma4}$  uniformly in $x\in D_T$ for $C=C(\gamma,m)$. 
To obtain \eqref{Boundrc}, it suffices to put  $L_{s,t}(x)=B_{s,t}(x)+|A_{s,t}(x)|$ and use \eqref{tmp.1107}.

The extension from $f\in \C^1$ to a general $f$ goes along the same lines as in, e.g., \cite[proof~of~Lemma 4.6]{BLM23}.  
  
\textit{Step 2}.  Assume now that $f\colon\R\to\R$ is a bounded continuous function. For $k\in\N$ put $f_k:=G_{1/k}f$. Then it is clear that $f_k$ converges to $f$ pointwise and $ \|f_k\|_{\C^\gamma}\le \|f\|_{\C^{\gamma}}$. Since $f_k\in\C^1$ for any $k\in\N$, inequality \eqref{Boundrc} for such $f$  follows now directly from Step~1 and Fatou's lemma.

\textit{Step~3}.  Let $f=\1_U$ where $U\subset \R$ is an open set of Lebesgue measure~$\eps>0$. By Urysohn's lemma, there exists a sequence of bounded continuous functions $f_n\colon\R\to[0,1]$, $n\in\Z_+$ such that $0\le f_n\le \1_U$ for any $n\in\Z_+$ and $f_n(x)\to \1_U(x)$ for any $x\in\R$ as $n\to\infty$. Then $f_n(V_r(y)+\psi_r(y))\to \1_U(V_r(y)+\psi_r(y))$ as $n\to\infty$ for any $r\in[0,T]$, $y\in\0$, $\omega\in\Omega$.
We apply the results of Step~1 with $\gamma=-1$ and deduce for any  $s,t\in\Delta_{[0,T]}$, $x\in D_T$, there exists a random variable $L^n_{s,t}(x)$ such that
  \begin{equation}\label{updstep22}
	\Bigl|\int_s^t\int_\0 p_{T-r}(x,y) f_n(V_r(y)+\psi_r(y))\,dydr\Bigr|\le 
	C\|f_n\|_{\C^{-1}}w_{0,T}(x)+L^n_{s,t}(x),
\end{equation}
and for any $m\ge2$ we have $\|L^n_{s,t}(x)\|_{L_m(\Omega)}\le C\|f_n\|_{\C^{-1}}$ for $C=C(\gamma,m)$.

By the embedding $L_1(\R)\subset \C^{-1}$, we have for any $n\in\Z_+$, $\|f_n\|_{\C^{-1}}\le \|f_n\|_{L_1(\R)}\le \eps$. Thus, defining $L_{s,t}(x):=\lim\inf_{n\to\infty}L^n_{s,t}(x)$, we get by Fatou's lemma from \eqref{updstep22}
  \begin{equation*}
	\Bigl|\int_s^t\int_\0 p_{T-r}(x,y) \1_U (V_r(y)+\psi_r(y))\,dydr\Bigr|\le 
	C\eps w_{0,T}(x)+L_{s,t}(x),
\end{equation*}
and for any $m\ge2$ we have $\|L_{s,t}(x)\|_{L_m(\Omega)}\le C\eps$ for $C=C(m)$.

\textit{Step 4}. Let $f$ be now a bounded measurable function. Fix  arbitrary $\delta>0$ and put $M:=\|f\|_{L_\infty(\R)}$. 
By Lusin’s theorem, there exists a bounded  continuous function $f_{\delta}\colon\R\to\R$ such that $\Leb(\{f_{\delta}\neq f\})\le \delta$ and 
$\|f_{\delta}\|_{L_\infty(\R)}\le  M$. Let $U_\delta$ be an open set of measure $2\delta$ containing the set $\{ f_{\delta}\neq f\}$; such set exists since the Lebesgue measure is regular. Then for any $x\in\R$ we have 
\begin{equation*}
	|f_{\delta}(x)-f(x)|\le 2M\I_{U_\delta}(x).
\end{equation*}
and 
\begin{equation}\label{urdiff}
	\|f_{\delta}\|_{\C^\gamma}\le  \|f\|_{\C^\gamma}+\|f_{\delta} - f\|_{\C^\gamma}\le  \|f\|_{\C^\gamma}+\|f_{\delta} - f\|_{L_{|\gamma|^{-1}\vee1}(\R)}\\
	\le  \|f\|_{\C^\gamma}+2M \delta^{|\gamma|\wedge1}.
\end{equation}
Therefore, applying Step~2 of the proof to the continuous function $f_{\delta}$ and using the results of Step~3 we get that for any $(s,t)\in\Delta_{[0,T]}$, $x\in D_T$ there exists a random variable $L_{s,t}^{\delta}(x)$ such that
\begin{align*}
&\Bigl|\int_s^t\!\int_\0 p_{T-r}(x,y) f(V_r(y)+\psi_r(y))\,dydr\Bigr|\\
&\,\, \le \Bigl|\int_s^t\!\int_\0 p_{T-r}(x,y) f_{\delta}(V_r(y)+\psi_r(y))\,dydr\Bigr|+2M\int_s^t\!\int_\0 p_{T-r}(x,y) \1_{U_\delta}(V_r(y)+\psi_r(y))\,dydr\\
&\,\,\le 		C\|f_{\delta}\|_{\C^\gamma}(t-s)^{\frac{3+\gamma}4}P_{T-t}w_{s,t}(x)+L_{s,t}^{\delta}(x)+ CM\delta w_{0,T}(x)
\end{align*}
and $\|L_{s,t}^{\delta}(x)\|_{L_m(\Omega)}\le C\|f_{\delta}\|_{\C^\gamma}|t-s|^{1+\frac\gamma4}+C\delta$ for $C=C(\gamma,m)$. Define now $L_{s,t}(x):=\lim\inf_{\delta\to0}L^\delta_{s,t}(x)$.
By passing to the limit in the above bound as $\delta\to0$ and using \eqref{urdiff} and Fatou's lemma, we get \eqref{Boundrc} and \eqref{Boundlst}.

\textit{Step 5}. Suppose that  $f$ is a nonnegative measurable function $\R\to\R$. It is clear that a sequence of bounded functions $f\wedge N$, $N\in\N$, converges to $f$ pointwise. Note also that $\|f\wedge N\|_{\C^{\gamma}}\le \|f\|_{\C^{\gamma}}$ for any $N\in\N$. Therefore,  
applying  the results of Step~4 to a bounded function $f\wedge N$ we derive
that for any $(s,t)\in\Delta_{[0,T]}$, $x\in D_T$ there exists a random variable $L_{s,t}^N(x)$   such that  
\begin{equation}\label{bigNbound}
	\int_s^t\int_\0 p_{T-r}(x,y) (f(V_r(y)+\psi_r(y))\wedge N)\,dydr\le 
	C\|f\|_{\C^\gamma}(t-s)^{\frac{3+\gamma}4}P_{T-t}w_{s,t}(x)+L^N_{s,t}(x),
\end{equation}
where $C=C(\gamma)$ and for any  $m\in[2,\infty)$ we have 
\begin{equation}\label{smallNbound}
\|L_{s,t}^N(x)\|_{\Lm}\le C\|f\|_{\C^\gamma}|t-s|^{1+\frac\gamma{4}},
\end{equation}
where $C=C(\gamma,m)$. 
Define $L_{s,t}(x):=\lim\inf_{N\to\infty}L^N_{s,t}(x)$.
To get \eqref{Boundrc} and \eqref{Boundlst}, we pass now to the limit as $N\to\infty$ in \eqref{bigNbound} and use Fatou's lemma and \eqref{smallNbound}.
\end{proof}

We also need the following integral bound obtained in \cite{ABLM24}.
\begin{proposition}\label{L:mainml}
	Let $\tau\in(0,1]$, $\nu\in[0,\frac12)$,  $0<T\le1$, $\gamma\in(-2,0)$, $\Gamma>0$. Let $f\colon\R\to\R$, $X\colon[0,T)\times D\to\R$ be measurable functions,   and let $\psi\colon[0,T]\times D\times\Omega\to\R$ be a measurable function adapted to the filtration $(\F_t)_{t\in[0,T]}$. Assume that 
	\begin{equation}\label{con.k}
		\gamma+4\tau>1;\quad \int_\0 |X_t(y)|dy\le \Gamma(T-t)^{-\nu},\quad t\in[0,T).
	\end{equation}
Suppose further that $f$ is bounded   or $f\ge0$.

Then for any $m\ge2$ there exists a constant $C=C(\gamma,\nu,\tau, m)>0$ such that for any $(s,t)\in\Delta_{[0,T]}$ 
	\begin{align}\label{Bound1K}
		&\Bigl\|\int_s^t\int_\0 X_r(y)f(V_r(y)+\psi_r(y))\,dydr\Bigr\|_{\Lm}\nn\\
		&\quad\le  C\|f\|_{\C^\gamma}\Gamma(t-s)^{1+\frac\gamma4-\nu}\bigl(1+    [\psi]_{\Ctimespace{\tau}{m}{[s,t]}}(t-s)^{\tau-\frac14}\bigr).
	\end{align} 
\end{proposition}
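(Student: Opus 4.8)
The plan is to prove \eqref{Bound1K} by the stochastic sewing lemma, after reducing to a smooth drift. First, exactly as in Steps~2--5 of the proof of \cref{L:metaml} (approximation of $f$ by continuous functions via Lusin's and Urysohn's theorems, monotone and dominated convergence, and the dichotomy ``$f$ bounded or $f\ge0$'', using $\|f\wedge N\|_{\C^\gamma}\le\|f\|_{\C^\gamma}$ and passing to the limit with Fatou's lemma), it suffices to treat $f\in\C^1(\R)$. For $(s,t)\in\Delta_{[0,T]}$ I freeze the slow variable and split
\[
\int_s^t\!\!\int_\0 X_r(y)f\bigl(V_r(y)+\psi_r(y)\bigr)\,dy\,dr=A_{s,t}+D_{s,t},\qquad A_{s,t}:=\int_s^t\!\!\int_\0 X_r(y)f\bigl(V_r(y)+P_{r-s}\psi_s(y)\bigr)\,dy\,dr.
\]
The shift in $A_{s,t}$ is $\F_s$-measurable, so a weighted analogue of \cite[Lemma~5.1]{ABLM24} applies: combining the conditional Gaussian smoothing of $f$ by $V$ (which, as in \cref{L:metaml}, contributes the factor $(r-s)^{\gamma/4}$ at the $\C^0$-level) with \eqref{con.k} and the elementary estimate $\int_s^t(T-r)^{-\nu}(r-s)^{\gamma/4}\,dr\le C(t-s)^{1+\frac\gamma4-\nu}$, I obtain $\|A_{s,t}\|_\Lm\le C\|f\|_{\C^\gamma}\Gamma\,(t-s)^{1+\frac\gamma4-\nu}$, the leading ``$1$''-term of \eqref{Bound1K}.

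It remains to control the corrector $D_{s,t}=(\A_t-\A_s)-A_{s,t}$, where $\A_t:=\int_0^t\!\int_\0 X_rf(V_r+\psi_r)$; this is exactly the object produced by the sewing lemma with a deterministic control (cf.\ \cref{lem:B.rcontrol}, conditioning at the left endpoint), the Riemann-sum convergence \eqref{limpart} being verified as in \cref{L:metaml}. Since $A_{s,t}$ is independent of the freezing point when $\psi\equiv0$, the germ increment $\delta A_{s,u,t}=\int_u^t\!\int_\0 X_r[f(V_r+P_{r-s}\psi_s)-f(V_r+P_{r-u}\psi_u)]$ is driven solely by $P_{r-s}\psi_s-P_{r-u}\psi_u=-P_{r-u}(\psi_u-P_{u-s}\psi_s)$. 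Its two sewing bounds come from the shift-cost estimate $\|f(\cdot+a)-f(\cdot+b)\|_{\C^{\gamma-1}}\le C\|f\|_{\C^\gamma}|a-b|$, the smoothing of $V$ at the left endpoint (contributing $(r-s)^{\frac{\gamma-1}4}$ at the $\C^1$-level), and the regularity $\|\psi_u-P_{u-s}\psi_s\|_\Lm\le[\psi]_{\Ctimespace{\tau}{m}{[s,t]}}(u-s)^\tau$ (propagated by the contractivity of $P$). The key device is to off-load the weight singularity onto the additive (hence superadditive) deterministic control $\lambda(s,t):=\int_s^t(T-r)^{-\nu}\,dr$, which obeys $\lambda(s,t)\le C(t-s)^{1-\nu}$. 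One then gets a conditional bound $\|\E^s\delta A_{s,u,t}\|_\Lm\le C\|f\|_{\C^\gamma}\Gamma\,[\psi]_{\Ctimespace{\tau}{m}{[s,t]}}(t-s)^{\tau+\frac\gamma4-\frac14}\lambda(s,t)$ and a moment bound $\|\delta A_{s,u,t}\|_\Lm\le C\|f\|_{\C^\gamma}\Gamma\,[\psi]_{\Ctimespace{\tau}{m}{[s,t]}}(t-s)^{\tau+\frac34+\frac\gamma4-\nu}$. Since the weight now lives in $\lambda$ rather than in the time exponent, the two sewing conditions become $\tau+\frac\gamma4-\frac14+1>1$ and $\tau+\frac34+\frac\gamma4-\nu>\frac12$, i.e.\ precisely $\gamma+4\tau>1$ and $\nu<\frac12$. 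The sewing lemma and $\lambda(s,t)\le C(t-s)^{1-\nu}$ then yield $\|D_{s,t}\|_\Lm\le C\|f\|_{\C^\gamma}\Gamma\,[\psi]_{\Ctimespace{\tau}{m}{[s,t]}}(t-s)^{\frac34+\frac\gamma4+\tau-\nu}$; as $\frac34+\frac\gamma4+\tau-\nu=(1+\frac\gamma4-\nu)+(\tau-\frac14)$, adding this to the bound for $A_{s,t}$ and factoring out $(t-s)^{1+\frac\gamma4-\nu}$ gives exactly \eqref{Bound1K}.

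The main obstacle is the conditional germ estimate. Because $\psi$ is controlled only in $L_m$ and not pathwise, its increment $\psi_u-P_{u-s}\psi_s$ carries no martingale cancellation, so the extra order of regularity that makes the budget close must be supplied entirely by the Gaussian smoothing of $V$, and at the \emph{left} endpoint, i.e.\ at scale $(r-s)^{1/2}$ rather than $(r-u)^{1/2}$; only the $(r-s)$-smoothing is compatible with off-loading the weight onto $\lambda$ with exponent one. Obtaining this left-endpoint smoothing is delicate precisely because the $\F_u$-measurable $\psi$-increment is correlated with the noise increment $\int_s^u P_{r-\theta}\,dW_\theta$ of $V$, which obstructs a direct averaging over that increment. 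Executing this estimate at the correct scale, together with the weighted version of \cite[Lemma~5.1]{ABLM24} and the check that the Step-one approximation commutes with the sewing limit, is where the genuine work lies; the remaining steps are the Beta-integral bookkeeping indicated above.
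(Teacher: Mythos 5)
The paper does not reprove this proposition from scratch: its proof is a two-line citation, deferring the case of continuous $f$ to \cite[Lemma~6.3]{ABLM24} and extending to bounded or nonnegative measurable $f$ by the approximation scheme of \cref{L:metaml}. Your proposal instead attempts a self-contained sewing proof. The skeleton (reduce to smooth $f$; freeze $\psi$ at the left endpoint to form the germ $A_{s,t}$; off-load the weight $(T-r)^{-\nu}$ onto a superadditive control; recover $\gamma+4\tau>1$ and $\nu<\tfrac12$ as the two sewing conditions) is reasonable and the exponent bookkeeping is consistent with \eqref{Bound1K}. But the decisive step --- the conditional estimate on $\delta A_{s,u,t}$ --- is not carried out, and the route you commit to for it is the one that fails. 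You insist on conditioning at the left endpoint $s$ so as to extract Gaussian smoothing at scale $(r-s)^{1/2}$; yet $\psi_u$ is only $\F_u$-measurable and is correlated with the noise increment of $V$ over $[s,u]$, so $\E^s f(V_r+P_{r-u}\psi_u)$ is \emph{not} a Gaussian mollification of $f$ evaluated at a frozen point, and the $\|G_{\varz}f\|_{\C^1}$ device collapses --- as you yourself observe before declaring this ``where the genuine work lies.'' The standard resolution, used in \cref{L:metaml} of this very paper, is to condition at the midpoint $u$, where both $\psi_s$ and $\psi_u$ are measurable; the weight $(T-r)^{-\nu}$ is then handled either by a sewing lemma with a singularity at the terminal time or by H\"older's inequality, splitting $(r-u)^{(\gamma-1)/4}(T-r)^{-\nu}$ between the time increment and the control. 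Your stated reason for rejecting midpoint conditioning (that only the $(r-s)$-smoothing is ``compatible with off-loading the weight onto $\lambda$ with exponent one'') is not correct.

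Two further mismatches. First, the bounds you write are $L_m$-bounds against a deterministic control, whereas \cref{lem:B.rcontrol} as stated requires an almost-sure pointwise bound $|\E^u\delta A_{s,u,t}|\le\Gamma_2|t-s|^{\alpha_2}\lambda(s,t)^{\beta_2}$ with $u$ the midpoint; you would need to invoke, and justify, a different variant of stochastic sewing. Second, the ``weighted analogue of \cite[Lemma~5.1]{ABLM24}'' giving $\|A_{s,t}\|_{\Lm}\le C\|f\|_{\C^\gamma}\Gamma(t-s)^{1+\gamma/4-\nu}$ is itself a sewing statement that you assert rather than prove. In sum, the reduction from measurable to smooth $f$ matches the paper's intended argument, but the proof of the core estimate is missing and the announced strategy for it would not go through as described.
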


\begin{proof} If the function $f$ is continuous, this statement is \cite[Lemma~6.3]{ABLM24}. The extension  to bounded or nonnegative functions is done exactly as in  the proof of \cref{L:metaml} above, see also \cite[proof~of~Lemma~4.6]{BLM23}.
\end{proof}

We would often use the following simple corollary of the above bound.

\begin{corollary}\label{c:mainml}
Let $\gamma\in(-\frac32,-1)$, $p\in[1,\infty]$, let $g,h\colon\R\to\R$, $X\colon[0,1]\times D\to\R$ be measurable functions,  and let $u\colon[0,T]\times D\times\Omega\to\R$ be a measurable function adapted to the filtration $(\F_t)_{t\in[0,T]}$. 
Suppose further that $g$ is bounded and $h\in L_p(\R)$.
	
Then there exists a constant $C=C(\gamma,p)>0$ such that for any $t\in [0,1]$
	\begin{equation}\label{cBound1K}
		\Bigl\|\int_0^t\int_\0 X_r(y)(h-g)(u_r(y))\,dydr\Bigr\|_{L_2(\Omega)}\le  C\|h-g\|_{\C^{\gamma}}(1+[u-V]_{\C^{\frac58}L_2([0,1])})\sup_{r\in[0,1]}\|X_r\|_{L_1(D)}.
	\end{equation} 
\end{corollary}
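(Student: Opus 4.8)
The plan is to obtain \eqref{cBound1K} from \cref{L:mainml} applied with $m=2$, $\tau=\tfrac58$, $\nu=0$, $\psi:=u-V$ (so that $V_r(y)+\psi_r(y)=u_r(y)$), and $\Gamma:=\sup_{r\in[0,1]}\|X_r\|_{L_1(D)}$, taking $s=0$ and $T=1$. With these parameters the first condition in \eqref{con.k} becomes $\gamma+4\cdot\tfrac58=\gamma+\tfrac52>1$, i.e.\ $\gamma>-\tfrac32$, which holds by assumption, while $\nu=0\in[0,\tfrac12)$ is immediate. Since $1+\tfrac\gamma4>0$ and $\tau-\tfrac14=\tfrac38>0$, for $t\le1$ the factors $t^{1+\gamma/4}$ and $t^{3/8}$ appearing in \eqref{Bound1K} are $\le1$ and may be absorbed into the constant, and $[\cdot]_{\C^{\frac58}L_2([0,t])}\le[\cdot]_{\C^{\frac58}L_2([0,1])}$. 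Hence \cref{L:mainml} gives, for every $f$ that is bounded or nonnegative,
\[
\Bigl\|\int_0^t\!\int_\0 X_r(y) f(u_r(y))\,dy\,dr\Bigr\|_{L_2(\Omega)}\le C\|f\|_{\C^\gamma}\bigl(1+[u-V]_{\C^{\frac58}L_2([0,1])}\bigr)\sup_{r\in[0,1]}\|X_r\|_{L_1(D)},
\]
with $C=C(\gamma)$.

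The difficulty is that $f:=h-g$ is in general neither bounded nor nonnegative, so \cref{L:mainml} cannot be applied to it directly; moreover one cannot split $h-g$ into positive and negative parts, since for $\gamma<0$ there is no bound of $\|(h-g)^{\pm}\|_{\C^\gamma}$ by $\|h-g\|_{\C^\gamma}$ (such a split destroys the cancellation recorded by $\|h-g\|_{\C^\gamma}$). To circumvent this, I would truncate only $h$: put $h_N:=(h\wedge N)\vee(-N)$, so that $h_N-g$ is bounded and the displayed estimate applies with $f=h_N-g$, yielding the bound with coefficient $\|h_N-g\|_{\C^\gamma}$. Writing $h_N-g=(h-g)-(h-h_N)$ and using the embeddings $L_p(\R)\hookrightarrow\C^{-1/p}\hookrightarrow\C^\gamma$ (valid because $\gamma<-1\le-1/p$ for $p\ge1$) together with $\|h-h_N\|_{L_p}\to0$ (dominated convergence, as $|h-h_N|\le|h|\in L_p$ and $h_N\to h$ pointwise), one gets $\|h-h_N\|_{\C^\gamma}\le C(\gamma,p)\|h-h_N\|_{L_p}\to0$, and therefore $\limsup_{N\to\infty}\|h_N-g\|_{\C^\gamma}\le\|h-g\|_{\C^\gamma}$.

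It then remains to pass to the limit $N\to\infty$ on the left-hand side. The truncation error $\int_0^t\!\int_\0 X_r(y)(h-h_N)(u_r(y))\,dy\,dr$ I would control by applying the displayed estimate separately to the two nonnegative functions $(h-h_N)^{+}$ and $(h-h_N)^{-}$ and using $\|(h-h_N)^{\pm}\|_{\C^\gamma}\le C\|(h-h_N)^{\pm}\|_{L_p}\le C\|h-h_N\|_{L_p}\to0$; here splitting into positive and negative parts is harmless, since we only need the error to vanish. Thus $\int_0^t\!\int_\0 X_r(y)(h_N-g)(u_r(y))\,dy\,dr\to\int_0^t\!\int_\0 X_r(y)(h-g)(u_r(y))\,dy\,dr$ in $L_2(\Omega)$ (which, incidentally, is what gives meaning to the target integral, as this $L_2$-limit). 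Taking $L_2$-norms, using the bound for $h_N-g$, and letting $N\to\infty$ yields
\[
\Bigl\|\int_0^t\!\int_\0 X_r(y)(h-g)(u_r(y))\,dy\,dr\Bigr\|_{L_2(\Omega)}\le C\Bigl(\limsup_{N\to\infty}\|h_N-g\|_{\C^\gamma}\Bigr)\bigl(1+[u-V]_{\C^{\frac58}L_2([0,1])}\bigr)\sup_{r\in[0,1]}\|X_r\|_{L_1(D)},
\]
and the inequality $\limsup_N\|h_N-g\|_{\C^\gamma}\le\|h-g\|_{\C^\gamma}$ gives \eqref{cBound1K} with $C=C(\gamma,p)$. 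I expect the truncation-and-limit step to be the main obstacle, since this is exactly where one must certify that the coefficient of the main term converges to $\|h-g\|_{\C^\gamma}$ (rather than to the separate quantities $\|h\|$ and $\|g\|$) while the truncation error is shown to vanish.
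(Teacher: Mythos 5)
Your proof is correct and follows essentially the same route as the paper's: apply \cref{L:mainml} with $\tau=\tfrac58$, $\nu=0$, $\psi=u-V$, $\Gamma=\sup_{r\in[0,1]}\|X_r\|_{L_1(D)}$, truncate $h$ at level $N$ so that the bounded part of the drift is treated directly while the tail is controlled through the embedding $L_p(\R)\hookrightarrow\C^\gamma$ (valid since $\gamma<-1$), and then let $N\to\infty$. The only immaterial differences are that the paper truncates via $h\I_{|h|\le N}$ and bounds the tail by the single nonnegative function $|h|\I_{|h|\ge N}$ rather than splitting it into positive and negative parts, and it takes the limit only in the resulting bound via a pointwise triangle inequality instead of phrasing it as $L_2(\Omega)$-convergence of the truncated integrals.
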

\begin{proof}
Fix arbitrary $N\ge0$ and apply   \cref{L:mainml} twice with $f=g-h\I_{|h|\le N}$ and then with $f=|h|\I_{|h|\ge N}$. The remaining parameters are as follows: $\tau=\frac58$, $T=1$,  $\nu=0$, $\psi=u-V$.  We see that $\gamma+4\tau>1$ and condition \eqref{con.k} holds with $\Gamma=\sup_{r\in[0,1]}\|X_r\|_{L_1(D)}$. The function $g-h\I_{|h|\le N}$ is bounded and the function $|h|\I_{|h|\ge N}$ is nonnegative. Thus, all the assumptions of \cref{L:mainml} are satisfied and we deduce from \eqref{Bound1K}
\begin{align}\label{usefulcorol}
	&\Bigl\|\int_0^t\int_\0 X_r(y)(h-g)(u_r(y))\,dy\,dr\Bigr\|_{L_2(\Omega)}\nn\\
	&\qquad\le \Bigl\|\int_0^t\int_\0 X_r(y) (h\I_{|h|\le N}-g)(u_r(y))\,dy\,dr\Bigr\|_{L_2(\Omega)}\nn\\
	&\qquad\phantom{\le}+\Bigl\|\int_0^t\int_\0 X_r(y) |h|(u_r(y))\I_{|h|(u_r(y))\ge N}\,dy\,dr\Bigr\|_{L_2(\Omega)}\nn\\
	&\qquad\le C\bigl(\|h\I_{|h|\le N}-g\|_{\C^{\gamma}}+\|h\I_{|h|\ge N}\|_{L_p(\R)}\bigr)(1+[u-V]_{\C^{\frac58}L_2([0,1])})\sup_{r\in[0,1]}\|X_r\|_{L_1(D)}\nn\\
	&\qquad\le C\bigl(\|h-g\|_{\C^{\gamma}}+2\|h\I_{|h|\ge N}\|_{L_p(\R)}\bigr)(1+[u-V]_{\C^{\frac58}L_2([0,1])})\sup_{r\in[0,1]}\|X_r\|_{L_1(D)},
\end{align}
for $C=C(\gamma,p)$ independent of $N$, where we used the embedding $L_p(\R)\subset \C^{\gamma}$, valid since $\gamma<-1$. Since $h\in L_p(\R)$, we have $\|h\I_{|h|\ge N}\|_{L_p(\R)}\to0$ as $N\to\infty$. Therefore, by passing to the limit in \eqref{usefulcorol} as $N\to\infty$, we get  \eqref{cBound1K}.
\end{proof}

Finally, we provide here a corollary of the Kolomogorov continuity theorem which will be used in the paper.

\begin{proposition}\label{p:kolmi}
Let $\gamma_1, \gamma_2>0$.
Let $X^n\colon [0,1]\times D\times \Omega \to\R$, $n\in\Z_+$ be a sequence of jointly continuous processes such that $X^n(0,x)=0$ for any $x\in D$, $n\in\Z_+$.  Assume further that for any $m\ge1$ there exists a constant $C=C(m)$ such that the following bounds holds for any $s,t\in[0,1]$, $x,y\in D$, $n\in\Z_+$:
	\begin{align}
		&\|X^n(t,x)-X^n(s,y)\|_{L_m(\Omega)}\le C |t-s|^{\gamma_1}+C|x-y|^{\gamma_2};\label{con1k}\\
		&\lim_{n,k\to\infty}\sup_{t\in[0,1]}\sup_{x\in D }\|X^n(t,x)-X^k(t,x)\|_{L_m(\Omega)}=0.\label{con2k}
	\end{align}

Then there exists  a jointly continuous process $X\colon[0,1]\times D\times \Omega \to\R$ such that for any $N>0$
\begin{equation}\label{convkolm}
\sup_{s,t\in[0,1]}\sup_\DN |X(t,x)- X^{n}(t,x)|\to0 \quad\text{in probability as $n\to\infty$.} 
\end{equation}
Furthermore, for any $\rho\in(0,1)$, $\lambda>\gamma_2-\gamma_2\rho$ one has 
\begin{equation}\label{holderkolm}
	\sup_{s,t\in[0,1]}\sup_{x\in D} \frac{|X(t,x)- X(s,y)|}{(|t-s|^{\gamma_1}+|x-y|^{\gamma_2})^\rho(1+|x|+|y|)^\lambda}<\infty.
\end{equation}
\end{proposition}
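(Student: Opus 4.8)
The plan is to combine two ingredients. First, using only the moment bound \eqref{con1k}, a weighted version of the Kolmogorov continuity theorem yields Hölder regularity estimates for each $X^n$ that are \emph{uniform in $n$} and that incorporate the polynomial weight $(1+|x|+|y|)^\lambda$ needed to cope with the unboundedness of $D=\R$. Second, the uniform Cauchy condition \eqref{con2k} together with these equi-Hölder estimates is interpolated to upgrade pointwise $\Lm$-convergence into convergence that is uniform on compact sets; this produces the limit process $X$ and yields \eqref{convkolm}. The regularity \eqref{holderkolm} is then obtained by re-running the weighted Kolmogorov estimate directly on $X$.

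A preliminary observation that drives the whole argument is that, taking $s=0$ and $y=x$ in \eqref{con1k} and using $X^n(0,x)=0$, we get $\sup_{t\in[0,1]}\sup_{x\in D}\|X^n(t,x)\|_{\Lm}\le C$ uniformly in $n$, so that the only source of spatial growth is the increment bound $\|X^n(t,x)-X^n(t,0)\|_{\Lm}\le C|x|^{\gamma_2}$. For the weighted estimate I would decompose $D$ into dyadic annuli $A_j:=\{2^j\le|x|<2^{j+1}\}$ and apply the anisotropic Kolmogorov/Garsia--Rodemich--Rumsey estimate on each box $[0,1]\times A_j$. A scaling computation shows that the Hölder seminorm measured against $(|t-s|^{\gamma_1}+|x-y|^{\gamma_2})^\rho$ over a spatial ball of radius $R=2^j$ has $\Lm$-norm of order $R^{\gamma_2(1-\rho)}$: rescaling $x\mapsto Rx$ turns $\|X^n(t,x)-X^n(t,y)\|_{\Lm}\le C|x-y|^{\gamma_2}$ into a Hölder-$\gamma_2$ bound with constant $CR^{\gamma_2}$ on the unit ball, and the $(\gamma_2\rho)$-Hölder seminorm of the rescaled field then scales like $R^{\gamma_2}\cdot R^{-\gamma_2\rho}=R^{\gamma_2(1-\rho)}$. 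Dividing by the weight $\sim R^\lambda$ and summing $m$-th moments over dyadic scales, the series $\sum_j 2^{jm(\gamma_2(1-\rho)-\lambda)}$ converges precisely when $\lambda>\gamma_2(1-\rho)$, which is the stated range. Choosing $m$ large enough (legitimate since \eqref{con1k} holds for every $m$) to clear the threshold required by Kolmogorov's theorem, this produces a constant $C=C(m)$, independent of $n$, with
\[
\Bigl\|\sup_{(t,x)\neq(s,y)}\frac{|X^n(t,x)-X^n(s,y)|}{(|t-s|^{\gamma_1}+|x-y|^{\gamma_2})^\rho(1+|x|+|y|)^\lambda}\Bigr\|_{\Lm}\le C.
\]

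Next I would prove convergence. Applying \eqref{con1k} to the difference $Y:=X^n-X^k$ gives the equi-Hölder bound $\|Y(t,x)-Y(s,y)\|_{\Lm}\le 2C(|t-s|^{\gamma_1}+|x-y|^{\gamma_2})$, while \eqref{con2k} gives the uniform smallness $\sup_{t,x}\|Y(t,x)\|_{\Lm}=\epsilon_{n,k}\to0$. Fix $N>0$ and cover $[0,1]\times\{|x|\le N\}$ by a grid of anisotropic mesh $\delta$, of cardinality $\lesssim \delta^{-(1/\gamma_1+1/\gamma_2)}$. Writing $\sup|Y|\le \max_{\mathrm{grid}}|Y|+\sup_{\mathrm{osc\ at\ scale\ }\delta}|Y|$, bounding the first term by a union bound and the second by the standard chaining estimate, one gets $\|\sup_{t,|x|\le N}|Y|\|_{\Lm}\le C\delta^{-(1/\gamma_1+1/\gamma_2)/m}\epsilon_{n,k}+CK\delta^{\rho'}$ for some $\rho'>0$ and $K=2C$. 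For each fixed $\delta$ the first term vanishes as $n,k\to\infty$, leaving $\limsup_{n,k}\|\sup_{t,|x|\le N}|X^n-X^k|\|_{\Lm}\le CK\delta^{\rho'}$; letting $\delta\to0$ shows $\|\sup_{t\in[0,1],|x|\le N}|X^n-X^k|\|_{\Lm}\to0$. Hence $(X^n)$ is Cauchy in the complete space $L_m(\Omega;C([0,1]\times[-N,N]))$ for every $N$; the limits are consistent in $N$ and define a process $X$, and passing to an a.s.\ uniformly-on-compacts convergent subsequence shows that $X$ admits a jointly continuous version and that \eqref{convkolm} holds.

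Finally, for \eqref{holderkolm} I would note that \eqref{con2k} gives $X^n(t,x)\to X(t,x)$ in $\Lm$ for each fixed $(t,x)$, so the moment bound \eqref{con1k} passes to the limit, $\|X(t,x)-X(s,y)\|_{\Lm}\le C(|t-s|^{\gamma_1}+|x-y|^{\gamma_2})$. Since $X$ is already jointly continuous, the weighted Kolmogorov estimate of the second paragraph applies verbatim to $X$ and gives a finite $m$-th moment for the weighted Hölder seminorm, whence \eqref{holderkolm} holds almost surely. The main obstacle is exactly that second paragraph: obtaining the \emph{sharp} weight exponent $\lambda>\gamma_2(1-\rho)$ on the unbounded domain, which requires the scaling bookkeeping across dyadic annuli rather than a single application of Kolmogorov's theorem on a fixed compact set. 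The anisotropy between the time exponent $\gamma_1$ and the space exponent $\gamma_2$, together with the need to also handle cross-annulus and near-origin pairs, are additional but routine technical points.
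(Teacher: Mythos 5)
Your argument is correct and follows essentially the same route as the paper: the paper simply delegates the two key steps to citations (existence of the jointly continuous limit and uniform-on-compacts convergence to \cite[Proposition~C.1]{BLM23}, and the weighted bound \eqref{holderkolm} to \cite[Theorem~1.4.1]{Kunita} combined with the rescaling argument of \cite[Lemmas~A.3--A.4]{GG22}), whereas you reconstruct both ingredients directly --- the dyadic-annulus rescaling with the exponent bookkeeping $\lambda>\gamma_2(1-\rho)$ is precisely the cited rescaling argument, and the grid-plus-chaining interpolation of \eqref{con2k} against the equi-H\"older bounds from \eqref{con1k} is the content of the cited Proposition~C.1. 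No gaps beyond the cross-annulus pairs you already flag as routine (and which indeed are, since the sup of $|X|$ over $[0,1]\times\{|x|\sim 2^{j}\}$ grows at most like $2^{j(\gamma_2+\eps)}$ while the denominator supplies $2^{j(\gamma_2\rho+\lambda)}$).
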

\begin{proof}
Existence of a jointly continuous process $X$ satisfying \eqref{holderkolm} follows from, e.g., \cite[Proposition~C.1]{BLM23}. By passing to the limit as $n\to\infty$ in \eqref{con1k}, we get by Fatou's lemma
\begin{equation*}
\|X(t,x)-X(s,y)\|_{L_m(\Omega)}\le C |t-s|^{\gamma_1}+C|x-y|^{\gamma_2}.
\end{equation*}
Inequality \eqref{holderkolm}, which is a bound on a global weighted H\"older norm of $X$, follows now from \cite[Theorem~1.4.1]{Kunita} and a rescaling argument as in \cite[Lemmas~A.3-A.4 and Corollary A.5]{GG22}.
\end{proof}

\section{Proofs of main results}\label{s:4}

To simplify the notation, without loss of generality, we assume that the time interval is $[0,1]$. 
Denote for $n\in \N$, $t\in[0,1]$
\begin{equation*}
\kappa_n(t):=\lfloor nt\rfloor n^{-1}.
\end{equation*}
Thus, $\kappa_n(t)$ is the gridpoint of the grid $\{0,1/n,\hdots,1\}$ closest to $t$ from the left. 


For a function $\phi\in\S(D)$ introduce an increasing sequences of Hilbertian seminorms:
\begin{equation}\label{seminorm}
	\|\phi\|_\se:=\sum_{i=0}^m \int_{D}(1+|x|^m)^2 \Bigl|\frac{\d^i}{\d x^i}\phi(x)\Bigr|^2\,dx,\quad m\in\Z_+.
\end{equation}	
It is well-known and easy to check that these seminorms determine the usual Schwartz topology, see, e.g., \cite[Remark~1.3.1]{ito}.

\subsection{Taking the limit in the integral bounds}

Let us recall that the class $\V(\kappa)$ consists of functions that can be decomposed as the sum of a stochastic convolution $V$ and a sufficiently smooth perturbation. We show that limits similar to \eqref{idmildreg} and \eqref{idweakreg} hold for any function in this class. Note that, at this stage, we do not assume $u$ to be a solution to any equation; it is simply considered as a perturbation of $V$.

\begin{lemma}\label{l:43}
Let $\gamma\in(-2,0)$, $\tau\in(0,1)$. Let $f\in\C^\gamma$ and let $u\colon(0,1]\times D\times \Omega\to\R$ be a jointly continuous adapted process. Suppose that $u\in \V(\tau)$ and 
\begin{equation}\label{gammatau}
	\gamma+4\tau>1.
\end{equation}	
 Then the following holds:
\begin{enumerate}[(i)]
	\item there exists a jointly continuous adapted process $H^f\colon[0,1]\times D\times \Omega\to\R$ such that
	for any  sequence of functions $(f^n)_{n\in\Z_+}$ in $\C_b^\infty$ with $f^n\to f$ in $\C^{\gamma-}$ we have for any $N>0$
	\begin{equation}\label{firstflim}
	 \sup_{t\in[0,1]}\sup_\DN
	\lt|\int_0^t\int_\0 p_{t-r}(x,y) f^n(u_r(y))\,dy\,dr-H^f_t(x)\rt|\to0\quad  \text{in probability as $n\to\infty$.}	
	\end{equation}
	Furthermore, for any $\eps>0$ we have 
	\begin{equation}\label{Hregural}
\sup_{s,t\in[0,1]}\sup_{x,y\in D}\frac{|H_t^f(x)-H_s^f(y)|}{(|t-s|^{\frac12-\eps}+|x-y|^{1-\eps})(1+|x|+|y|)^{2\eps}}<\infty\quad \text{a.s.}
	\end{equation}		
		
	\item for any test function $\phi\in\S(D)$ there exists a continuous process $\H^f(\phi)\colon[0,1]\times \Omega\to\R$ such that for any  sequence of functions $(f^n)_{n\in\Z_+}$ in $\C_b^\infty$ with $f^n\to f$ in $\C^{\gamma-}$ we have
	\begin{equation}\label{secondflim}
		\sup_{t\in[0,1]}
		\lt|\int_0^t\int_\0 \phi(x) f^n(u_r(x))\,dx\,dr-\H^f_t(\phi)\rt|\to0\quad  \text{in probability as $n\to\infty$.}
	\end{equation}
\end{enumerate}	
\end{lemma}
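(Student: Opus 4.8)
The plan is to prove both statements by applying the integral bounds from \cref{L:metaml} and \cref{L:mainml} to the differences $f^n - f^k$ of smoothed drifts, establishing Cauchy-type estimates, and then invoking the Kolmogorov-type convergence result \cref{p:kolmi}. The key observation is that the regularization sequence $(f^n)$ converges in $\C^{\gamma-}$, so the differences $f^n-f^k$ are uniformly bounded in $\C^\gamma$ and tend to zero in every $\C^{\gamma'}$ with $\gamma'<\gamma$.

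For part (i), I would set $X^n_t(x):=\int_0^t\int_\0 p_{t-r}(x,y) f^n(u_r(y))\,dy\,dr$, which is jointly continuous since each $f^n\in\C_b^\infty$, and verify that $X^n(0,x)=0$. The plan is to apply \cref{p:kolmi} with $\gamma_1=\frac12-\eps$ and $\gamma_2=1-\eps$ (for arbitrary small $\eps$, chosen so the exponents are positive and consistent with the regularity in \eqref{Hregural}). To check the temporal-spatial increment bound \eqref{con1k}, I would write $X^n(t,x)-X^n(s,y)$ as a sum of increments of the space-time convolution of $f^n(u)$ against the heat kernel, and bound each piece using \cref{L:mainml} (with $\psi=u-V$, $\tau$, $\nu=0$, and $\Gamma$ controlled by standard heat-kernel estimates): the point is that $\|f^n\|_{\C^\gamma}$ is uniformly bounded and the condition $\gamma+4\tau>1$ in \eqref{gammatau} is exactly what \cref{L:mainml} requires. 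The crucial Cauchy condition \eqref{con2k} follows by applying \cref{L:mainml} to the difference $f^n-f^k$: since this difference is bounded and tends to zero in $\C^{\gamma'}$ for $\gamma'<\gamma$, and since the embedding constants are uniform, the $L_m$-norm of $X^n(t,x)-X^k(t,x)$ is dominated by $C\|f^n-f^k\|_{\C^{\gamma'}}(1+[u-V]_{\Ctimespace{\tau}{m}{[0,1]}})$, which vanishes as $n,k\to\infty$. \cref{p:kolmi} then produces the limiting process $H^f$ satisfying the uniform convergence \eqref{firstflim} and the Hölder regularity \eqref{holderkolm}, which gives \eqref{Hregural}. Independence of $H^f$ from the approximating sequence follows because any two approximating sequences can be interleaved into a single one.

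For part (ii), the argument is parallel but simpler, since pairing against a fixed Schwartz test function $\phi$ removes the spatial variable. I would define $Y^n_t(\phi):=\int_0^t\int_\0 \phi(x) f^n(u_r(x))\,dx\,dr$ and apply \cref{L:mainml} with $X_r=\phi$ (whose $L_1$ norm is finite, so $\Gamma<\infty$ and $\nu=0$). The temporal increment bound and the Cauchy estimate for $Y^n-Y^k$ follow from the same reasoning as above; a one-dimensional Kolmogorov continuity argument in the time variable then yields the continuous limit process $\H^f(\phi)$ and the convergence \eqref{secondflim}.

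The main obstacle is the verification of the increment bound \eqref{con1k}, specifically the spatial increment $\|X^n(t,x)-X^n(t,y)\|_{L_m}$: \cref{L:mainml} is stated for a fixed spatial point and controls the integral $\int X_r(y)f(\cdots)\,dy$, so to handle the spatial Hölder seminorm I would take $X_r(\cdot)=p_{t-r}(x,\cdot)-p_{t-r}(y,\cdot)$ and need a quantitative bound of the form $\int_\0 |p_{t-r}(x,z)-p_{t-r}(y,z)|\,dz\le C|x-y|^{1-\eps}(t-r)^{-\nu}$ for suitable $\nu<\tfrac12$, matching the hypothesis \eqref{con.k}. Establishing this heat-kernel Hölder estimate (uniformly across the three settings of \cref{c:conv}) and then checking $\gamma+4\tau>1$ remains compatible with the resulting $\nu$ is the technically delicate step; the temporal increments are handled analogously using the $\V(\tau)$ regularity of $u$ encoded in $[u-V]_{\Ctimespace{\tau}{m}{[0,1]}}$.
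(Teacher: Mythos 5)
Your proposal is correct and follows essentially the same route as the paper: define $H^{f,n}$ (resp.\ $\H^{f,n}$), verify \eqref{con1k} and \eqref{con2k} via \cref{L:mainml} applied with $\psi=u-V$ and, for the Cauchy property, with $f^n-f^k$ in place of $f$ (using that $\gamma'+4\tau>1$ for some $\gamma'<\gamma$), then invoke \cref{p:kolmi} and the interleaving argument for uniqueness of the limit. The heat-kernel $L_1$-increment estimate you flag as the delicate step is exactly what the paper imports from \cite[Lemma~C.2]{ABLM24}, yielding $\Gamma=|x-y|^{2\nu}+(t-s)^\nu$ with $\nu\in(0,\tfrac12)$ independent of the condition $\gamma+4\tau>1$, so no compatibility issue arises.
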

\begin{proof} (i). Let $(f^n)_{n\in\Z_+}$ be any sequence of $\C_b^\infty$ functions   such that $f^n\to f$ in $\C^{\gamma-}$. 
We apply the Kolomogorov continuity theorem in the form of \cref{p:kolmi} to the sequence of processes
\begin{equation*}
H^{f,n}_t(x):=\int_0^t\int_\0 p_{t-r}(x,y) f^n(u_r(y))\,dy\,dr,\quad t\in[0,1],\,x\in D,
\end{equation*}	
where $n\in\Z_+$. Let us verify that conditions \eqref{con1k} and \eqref{con2k} of the theorem are satisfied. Let $n\in\Z_+$, $(s,t)\in \Delta_{[0,1]}$, $x,y\in D$. Then for any $m\ge2$ we get
\begin{align}\label{hdiffm}
\|H^{f,n}_t(x)-H^{f,n}_s(y)\|_{L_m(\Omega)}&\le\Bigl\|\int_s^t\int_\0 p_{t-r}(x,z) f^n(u_r(z))\,dz\,dr\Bigr\|_{L_m(\Omega)}\nn\\
&\phantom{\le}+\Bigl\|\int_0^s\int_\0 (p_{t-r}(x,z)-p_{s-r}(y,z)) f^n(u_r(z))\,dz\,dr\Bigr\|_{L_m(\Omega)}\nn\\
&=:I_1^m+I_2^m.
\end{align}
Fix $\gamma'<\gamma$  such that 
\begin{equation*}
\gamma'>-2\qquad \text{and}\qquad \gamma'+4\tau>1;
\end{equation*}
this is possible by assumptions of the lemma. We apply now \cref{L:mainml} to $X_r:=p_{t-r}(x,\cdot)$, $\nu=0$, $\Gamma=1$, $\psi=u-V$,  $\gamma'$ in place of $\gamma$ and $f^n$ in place of $f$. We note that the function $f^n$ is bounded by assumption and thus all the conditions of  \cref{L:mainml} are satisfied. Therefore, inequality \eqref{Bound1K} implies
\begin{equation}\label{ionem}
I_1^m\le C \|f^n\|_{\C^{\gamma'}}(t-s)^{1+\frac{\gamma'}4}(1+[u-V]_{\C^\tau L_m([0,1])})
\end{equation}
for $C=C(\gamma,\tau,m)>0$.
In a similar manner,  we apply \cref{L:mainml} to $T=s$, $X_r:=p_{t-r}(x,\cdot)-p_{s-r}(y,\cdot)$, where $r\in[0,s]$,$\psi=u-V$, $\gamma'$ in place of $\gamma$ and $f^n$ in place of $f$. We use \cite[Lemma~C.2]{ABLM24} to get for any $\nu\in(0,1/2)$, $r\in[0,s)$
\begin{align*}
\int_D |p_{t-r}(x,z)-p_{s-r}(y,z)|\,dz &\le \int_D (|p_{t-r}(x,z)-p_{t-r}(y,z)|+|p_{t-r}(y,z)-p_{s-r}(y,z)|)\,dz\\
&\le (|x-y|^{2\nu}+(t-s)^\nu) (s-r)^{-\nu}.
\end{align*}	
Thus \eqref{con.k} holds with $\Gamma:=|x-y|^{2\nu}+(t-s)^\nu$. Hence we get 
\begin{equation*}
	I_2^m\le C \|f^n\|_{\C^{\gamma'}} (|x-y|^{2\nu}+(t-s)^\nu)(1+[u-V]_{\C^\tau L_m([0,1])}),
\end{equation*}
where $C=C(\gamma,\nu,\tau,m)>0$. 
Combining this with \eqref{ionem} and \eqref{hdiffm}, we finally obtain  
\begin{equation*}
\|H^{f,n}_t(x)-H^{f,n}_s(y)\|_{L_m(\Omega)}\le C \|f^n\|_{\C^{\gamma'}} (|x-y|^{2\nu}+(t-s)^\nu)(1+[u-V]_{\C^\tau L_m([0,1])}),
\end{equation*}
where we used that $\nu<1/2$ and $1+\frac{\gamma'}4>1/2$. This shows \eqref{con1k} since ${[u-V]_{\C^\tau L_m([0,1])}<\infty}$ thanks to the condition $u\in\V(\tau)$ and $\sup_{n\in\Z_+}\|f^n\|_{\C^{\gamma'}}<\infty$ by the definition of convergence in $\C^{\gamma-}$.

To show \eqref{con2k}, we write for any $n,k\in\Z_+$, $t\in[0,1]$, $x\in D$
\begin{align*}
	\|H^{f,n}_t(x)-H^{f,k}_t(x)\|_{L_m(\Omega)}&\le\Bigl\|\int_0^t\int_\0 p_{t-r}(x,z) (f^n-f^k)(u_r(z))\,dz\,dr\Bigr\|_{L_m(\Omega)}\\
	&\le  C \|f^n-f^k\|_{\C^{\gamma'}} (1+[u-V]_{\C^\tau L_m([0,1])}),
\end{align*}
where the last inequality follows from \cref{L:mainml}. Since as above  $u\in\V(\tau)$ implies  $[u-V]_{\C^\tau L_m([0,1])}<\infty$ and  $\lim_{n,k\to\infty}\|f^n-f^k\|_{\C^{\gamma'}}=0$ by the definition of convergence in $\C^{\gamma-}$, we see that condition \eqref{con2k} holds. Thus, all the conditions of \cref{p:kolmi} are satisfied. Therefore, \eqref{convkolm} implies \eqref{firstflim} and \eqref{holderkolm} yields \eqref{Hregural}.

It is immediate to see that the limiting process $H^f$ does not depend on the approximating sequence $(f^n)_{n\in\Z_+}$. Indeed, let $(\tilde f^{n})_{n\in\Z_+}$ be any other sequence of $\C_b^\infty$ functions  converging to $f$ in $\C^{\gamma-}$. Then for the merged sequence $(h^n)_{n\in\Z_+}$, given by $h^{2n}=f^n$ and $h^{2n+1}=\tilde f^n$, $n\in\Z_+$, we have by \eqref{firstflim} for any $N>0$
	\begin{equation*}
	\sup_{t\in[0,1]}\sup_\DN 
	\lt|\int_0^t\int_\0 p_{t-r}(x,y) h^n(u_r(y))\,dy\,dr-\wt H^f_t(x)\rt|\to0\quad  \text{in probability as $n\to\infty$;}
\end{equation*}
for a certain jointly continuous adapted process $\wt H^f\colon[0,1]\times D\times \Omega\to\R$. However, recalling \eqref{firstflim}, we see that $\int_0^t\int_\0 p_{t-r}(x,y) h^{2n}(u_r(y))\,dy\,dr$ converges in probability as $n\to\infty$ uniformly on compacts to $H^f_t(x)$. Hence a.s. we have $H^f_t(x)=\wt H^f_t(x)$ for all $t\in[0,1]$, $x\in D$. Thus, 
$$
\int_0^t\int_\0 p_{t-r}(x,y) \wt f^{n}(u_r(y))\,dy\,dr=\int_0^t\int_\0 p_{t-r}(x,y) h^{2n+1}(u_r(y))\,dy\,dr\to H^f_t(x)
$$
 in probability as $n\to\infty$ uniformly on compacts. Therefore, the limiting process $H^f$ does not depend on the approximating sequence.

(ii). We argue similarly to part (i) of the proof. Fix $\phi\in\S(D)$, $m\ge2$. We would like to apply \cref{p:kolmi} to the sequence of processes
\begin{equation*}
\H^{f,n}_t:=\int_0^t\int_\0 \phi(x)  f^n(u_r(x))\,dx\,dr,\quad t\in[0,1],
\end{equation*}
where $n\in\Z_+$. First, we verify that this sequence satisfies condition  \eqref{con1k}.
We get
\begin{align*}
	\|\H^{f,n}_t-\H^{f,n}_s\|_{L_m(\Omega)}&\le\Bigl\|\int_s^t\int_\0 \phi(x)  f^n(u_r(x))\,dx\,dr\Bigr\|_{L_m(\Omega)}\\
	&\le  C \|f^n\|_{\C^\gamma}\|\phi\|_{L_\infty(D)}(t-s)^{1+\frac\gamma4}(1+[u-V]_{\C^\tau L_m([0,1])}), 
\end{align*}
where the last inequality follows from \cref{L:mainml} applied to $X:=\phi$, $f^n$ in place of $f$, $\nu=0$. Since $u\in\V(\tau)$, we see that $[u-V]_{\C^\tau L_m([0,1])}<\infty$ and thus  \eqref{con1k} holds.

Next, to show  \eqref{con2k}, we  pick as above any $\gamma'<\gamma$ such that $\gamma'+4\tau>1$. Then by \cref{L:mainml}, we get 
\begin{align*}
	\|\H^{f,n}_t-\H^{f,k}_t\|_{L_m(\Omega)}&\le\Bigl\|\int_0^t\int_\0 \phi(x)  (f^n-f^k)(u_r(z))\,dz\,dr\Bigr\|_{L_m(\Omega)}\\
	&\le  C \|f^n-f^k\|_{\C^{\gamma'}} (1+[u-V]_{\C^\tau L_m([0,1])}).
\end{align*}
We note that as above $[u-V]_{\C^\tau L_m([0,1])}<\infty$ thanks to the assumption $u\in\V(\tau)$ and thus \eqref{con2k} holds.

Thus, we have verified all the conditions of \cref{p:kolmi}. The desired limit \eqref{secondflim} follows now from \eqref{convkolm}. Arguing exactly as in the end of part (i) of the proof, we see that the limiting process $\H^f(\phi)$ does not depend on the approximating sequence $(f^n)_{n\in\Z_+}$.
\end{proof}	

\cref{l:43} allows us to show that any mild regularized solution to \eqref{SPDE} cannot grow too fast as $|x|\to\infty$. This will be important later to prove that if $u$ is a  mild regularized solution to \eqref{SPDE}, then the process $t\mapsto \la u_t,\phi\ra$ is a.s. continuous for any test function $\phi\in\S(D)$.
\begin{corollary}\label{c:globalspaceb}Let $\alpha>-3/2$, $b\in\C^\alpha$, 
$u_0\in\Bsp(\0)$. Let 
\begin{equation*}
 u_t(x)=P_tu_0(x)+K_t(x)+V_t(x),\quad  x\in\0,\,\, t\in(0,1]
\end{equation*}
be a mild regularized solution  of equation  \eqref{SPDE}. Suppose that $u\in\V(\frac58)$. Then
for any $\eps>0$
\begin{align}
\label{diffbound}
&\sup_{s,t\in[0,1]}\sup_{x,y\in D}\frac{|K_t(x)-K_s(y)|}{(|t-s|^{\frac12-\eps}+|x-y|^{1-\eps})(1+|x|+|y|)^{2\eps}}<\infty\quad \text{a.s.;}	
\\
\label{supbound}
&\sup_{t\in[0,1]}\sup_{x\in D}\frac{|u_t(x)|}{1+|x|^{\eps}}<\infty\quad \text{a.s.}	
\end{align}
\end{corollary}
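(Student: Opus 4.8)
The plan is to first identify the process $K$ in the decomposition of the mild regularized solution with the process $H^b$ produced by \cref{l:43}, which immediately yields \eqref{diffbound}, and then to prove \eqref{supbound} by combining that weighted Hölder bound with the uniform moment bound built into membership in $\V(5/8)$. For the identification, I would choose $\gamma\in(-\tfrac32,0)$ with $\gamma\le\alpha$; this is possible since $\alpha>-\tfrac32$, and the nonhomogeneous Besov embedding gives $b\in\C^\alpha\subset\C^\gamma$. With $\tau=\tfrac58$ one has $\gamma+4\tau=\gamma+\tfrac52>1$ and $u\in\V(\tfrac58)=\V(\tau)$, so \cref{l:43}(i) applies to $f=b$ and produces a jointly continuous process $H^b$ satisfying \eqref{firstflim} and the weighted Hölder bound \eqref{Hregural}. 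Now fix any sequence $(b^n)\subset\C_b^\infty$ with $b^n\to b$ in $\C^{\alpha-}$ (e.g.\ $b^n=G_{1/n}b$). Since $\gamma\le\alpha$, convergence in $\C^{\alpha-}$ implies convergence in $\C^{\gamma-}$: the uniform bound on $\|b^n\|_{\C^\gamma}$ follows by embedding from the bound on $\|b^n\|_{\C^\alpha}$, and $\|b^n-b\|_{\C^{\gamma'}}\to0$ for every $\gamma'<\gamma\le\alpha$. Hence the integrals $\int_0^t\int_\0 p_{t-r}(x,y)b^n(u_r(y))\,dy\,dr$ converge, uniformly on compacts and in probability, both to $H^b$ by \eqref{firstflim} and to $K$ by \eqref{idmildreg}. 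By uniqueness of limits in probability and joint continuity, $K=H^b$ a.s., so \eqref{Hregural} is exactly \eqref{diffbound}.

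For \eqref{supbound} the essential observation is that \eqref{diffbound} alone is insufficient: taking $s=t$, $y=0$ it yields only growth of order $|x|^{1+\eps}$, whereas we must prove that $u$ grows slower than every positive power of $|x|$. The decisive extra input is the uniform moment bound. Writing $u_t=P_tu_0+K_t+V_t$, we have $|P_tu_0(x)|\le\|u_0\|_{L_\infty}$, and $\|V_t(x)\|_{\Lm}\le C_m$ uniformly in $(t,x)$ (for $D=\R$ because $\Var(V_t(x))=\sqrt{t/\pi}$ is independent of $x$; for $D=[0,1]$ this and indeed all of \eqref{supbound} is trivial by continuity on a compact). Since $u\in\V(\tfrac58)$ gives $\sup_{(t,x)}\|u_t(x)\|_{\Lm}<\infty$ for every $m\ge2$, it follows that $\|K_t(x)\|_{\Lm}\le C_m$ uniformly in $(t,x)$. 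Moreover, setting $y=x$ in the unweighted $L_m$-increment estimate established inside the proof of \cref{l:43} and passing to the limit in $n$ with Fatou's lemma gives $\|K_t(x)-K_s(x)\|_{\Lm}\le C_m|t-s|^\nu$ for some $\nu<\tfrac12$, uniformly in $x$; since $K_0\equiv0$, the Kolmogorov continuity theorem in the time variable yields $\bigl\|\sup_{t\in[0,1]}|K_t(k)|\bigr\|_{\Lm}\le C_m$ uniformly in $k\in\Z$.

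I would then run a Borel--Cantelli argument over the integer lattice. By Markov's inequality, $\P\bigl(\sup_t|K_t(k)|>(1+|k|)^\eps\bigr)\le C_m(1+|k|)^{-m\eps}$, and choosing $m>1/\eps$ makes the sum over $k\in\Z$ finite, so a.s.\ $\sup_t|K_t(k)|\le(1+|k|)^\eps$ for all but finitely many $k$. For $x\in[k,k+1]$, \eqref{diffbound} with $s=t$ gives $|K_t(x)-K_t(k)|\le C_\omega(1+|k|)^{2\eps}$, whence $\sup_t|K_t(x)|\le C_\omega(1+|x|^{2\eps})$ a.s.; as $\eps>0$ is arbitrary, $K$ grows slower than any positive power of $|x|$. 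The stochastic convolution $V$ has uniformly bounded moments and the same weighted Hölder regularity (again via \cref{p:kolmi}, or by standard heat-kernel estimates), so the identical lattice argument gives $\sup_t|V_t(x)|\le C_\omega(1+|x|^\eps)$ a.s. Combining the three terms yields \eqref{supbound}. The main obstacle is precisely this second half: the weighted Hölder bound is too weak on its own, and the crux is to extract the uniform-in-$(t,x)$ moment bound for $K$ from the hypothesis $u\in\V(\tfrac58)$ and convert it, via Kolmogorov continuity in time and Borel--Cantelli over the lattice, into an almost sure sub-polynomial growth rate.
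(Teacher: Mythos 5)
Your identification of $K$ with $H^b$ and the derivation of \eqref{diffbound} from \eqref{Hregural} follow the paper's proof exactly, and that part is fine. For \eqref{supbound}, however, your route diverges, and it rests on a mistaken premise: you assert that \eqref{diffbound} alone cannot give sub-polynomial growth because the choice $s=t$, $y=0$ only yields order $|x|^{1+\eps}$. But the right choice is the pure \emph{time} increment $s=0$, $y=x$: since $K_0\equiv 0$ (immediate from \eqref{idmildreg} at $t=0$, or from $K=H^b$ with $H^b_0=0$ by construction in \cref{p:kolmi}), \eqref{diffbound} gives $|K_t(x)|=|K_t(x)-K_0(x)|\le \Gamma(\omega)\,|t|^{\frac12-\eps}(1+2|x|)^{2\eps}\le C_\omega(1+|x|)^{2\eps}$ directly, which is exactly the sub-polynomial bound needed. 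This is what the paper does, combining it with boundedness of $P_t u_0$ and the standard Kolmogorov bound $\sup_t\sup_x |V_t(x)|/(1+|x|^\eps)<\infty$ obtained from $\|V_t(x)-V_s(y)\|_{\Lm}\le C|t-s|^{1/4}+C|x-y|^{1/2}$. Your alternative — extracting a uniform $L_m$ bound on $K$ from $u\in\V(5/8)$, upgrading it to $\bigl\|\sup_t|K_t(k)|\bigr\|_{\Lm}\le C_m$ via Kolmogorov in time, and running Borel--Cantelli over the integer lattice before interpolating with \eqref{diffbound} — is, as far as I can tell, a valid proof, and it has the mild virtue of not needing the weight exponent $2\eps$ in \eqref{Hregural} at all; but it is considerably longer than necessary, and the claim that motivates it (that the weighted H\"older bound is ``too weak on its own'') is false.
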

\begin{proof}
We apply \cref{l:43}. Let $(b^n)_{n\in\Z_+}$ be a sequence of functions in $\C_b^\infty$ converging to $b$ in $\C^{\alpha-}$.  Recall that by definition of regularized solution we have
\begin{equation*}
	 \sup_{t\in[0,1]}\sup_\DN
	\lt|\int_0^t\int_\0 p_{t-r}(x,y) b^n(u_r(y))\,dy\,dr-K_t(x)\rt|\to0\quad  \text{in probability as $n\to\infty$.}	
\end{equation*}
Therefore, the process $K$ coincides with the process $H^b$ define in \eqref{firstflim} with $b$ in place of $f$. Inequality \eqref{diffbound} follows now from \eqref{Hregural}.

It follows immediately by a standard direct calculation (see, e.g., \cite[Proposition 3.2.2]{Marta}) that for any $m\ge1$ there exists a constant $C=C(m)$ such that for any $(s,t)\in\Delta_{[0,1]}$, $x,y\in D$ we have
\begin{equation*}
\|V(t,x)-V(s,y)\|_{L_m(\Omega)}\le C |t-s|^{\frac14}+C|x-y|^{\frac12}.
\end{equation*}	
Applying the Kolmogorov continuity theorem, we deduce for any $\eps>0$.
\begin{equation}\label{vvvbound}
\sup_{t\in[0,1]}\sup_{x\in D}\frac{|V(t,x)|}{1+|x|^{\eps}}<\infty\quad \text{a.s.}		
\end{equation}
Recall that $\sup_{t\in[0,1],x\in D}|P_tu_0(x)|<\infty$, since $u_0$ is bounded. Therefore, 
using \eqref{diffbound} with $s=0$, $y=x$ and \eqref{vvvbound}, we get for any $\eps>0$ 
\begin{equation*}
\sup_{t\in[0,1]}\sup_{x\in D}\frac{|K_t(x)|+|P_tu_0(x)|+|V(t,x)|}{1+|x|^{\eps}}<\infty\quad \text{a.s.},
\end{equation*}
which is \eqref{supbound}.
\end{proof}

\subsection{From weak to mild formulation}

Let us recall that our goal is to pass from the identities \eqref{idweak} and \eqref{idweakreg}, which appear in the definitions of analytically weak and regularized weak solutions, respectively, to the identities \eqref{SPDEfunc} and \eqref{idmildid}. In this subsection, we take the first step in this direction, which is common to both analytically weak and regularized weak solutions.

We begin by proving that \eqref{idweak} and \eqref{idweakreg} can be extended to accommodate test functions $\phi$ that depend on time. Recall the definition of a seminorm$\|\cdot\|_\se$ in \eqref{seminorm}.

\begin{lemma}	\label{l:phitd}
Let $u_0\in\Bsp(\0)$ and let $u\colon(0,1]\times\0\times\Omega\to\R$ be a jointly continuous  process.
Let $\H\colon[0,1]\times \S(D)\times \Omega\to\R$ be a continuous in time process. Assume that 
for any $\phi\in\S(D)$ the following identity holds $\P$-a.s.
\begin{equation}\label{uid}
	\langle u_t,\phi\rangle= \langle u_0,\phi\rangle +\int_0^t \langle u_s, \frac{1}{2}\Delta\phi\rangle\,ds  +\H_t(\phi)+W_t(\phi),\quad t\in[0,1]
\end{equation}
and all the integrals are  finite. Then the following holds.
\begin{enumerate}[(i)] 
\item There exists a set $\Omega'\subset\Omega$ of full probability measure with the following property: for any $\omega\in\Omega'$ there exist $m=m(\omega)\in\Z_+$, $\Gamma=\Gamma(\omega)>0$ such that for any $\phi\in\S(D)$
\begin{equation}\label{mainsemi}
\sup_{t\in[0,1]}|\la u_t(\omega),\phi\ra|\le \Gamma(\omega)\|\phi\|_{\se(\omega)}.
\end{equation}

\item For any $t\in(0,1]$, $x\in D$ we have 
\begin{equation}
P_{\eps}u_{t-\eps}(x)\to u_t(x)\quad\text{a.s. as $\eps\to0$.}\label{Pconv}
\end{equation}
\item 
Let 
$f\in\C([0,1],\S(D))$.
Then there exists a continuous adapted process denoted as $\int_0^\cdot \H(ds,f_s)\colon [0,1]\times \Omega\to\R$ such that 
 for any $t\in[0,1]$ we have
\begin{equation}\label{eq:13_10_5}
\sum_{i=1}^{\lfloor nt\rfloor} (\H_{\frac{i}n}(f_{\frac{i-1}n})-\H_{\frac{i-1}n}(f_{\frac{i-1}n}))
			\to\int_0^t \H(ds,f_s)
\end{equation}
in probability as $n\to\infty$. 
Further,  a.s. for any $t\in[0,1]$
\begin{equation}\label{identl41}
\langle u_t, f_t\rangle= \langle u_{0},f_{0}\rangle + \int_{0}^t \langle u_s, \frac{\partial  f_s}{\partial s}+ \frac{1}{2}\Delta f_s\rangle\,ds  +\int_0^t \H(ds,f_s)+\int_{0}^t\int_D f_{s}(x)W(ds,dx).
\end{equation}
\end{enumerate}

\end{lemma}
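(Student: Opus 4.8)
The three statements are proved in order, with (i) carrying the essential load: it upgrades the $\phi$-by-$\phi$ information in \eqref{uid} into a bound uniform in both $t$ and $\phi$, thereby dispensing with any a priori tempered-in-space hypothesis on $u$.

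\emph{Part (i).} The plan is to apply Mitoma's theorem \cite{mitoma}. For each fixed $\phi\in\S(D)$ the right-hand side of \eqref{uid} is continuous in $t$ (a constant, a Lebesgue integral, the time-continuous process $\H_\cdot(\phi)$, and the Brownian motion $W_\cdot(\phi)$), so $t\mapsto\la u_t,\phi\ra$ is a continuous real process, and $\phi\mapsto\la u_t,\phi\ra$ is linear. Since $\S(D)$ is a nuclear Fréchet space, Mitoma's theorem provides a process $\wt u$ with paths in $\C([0,1],\S'(D))$ such that for every fixed $\phi$ the processes $\la\wt u_\cdot,\phi\ra$ and $\la u_\cdot,\phi\ra$ are indistinguishable. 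Testing against a countable dense family in $\S(D)$ and using time-continuity of both sides identifies $\wt u_t$ with $u_t$ for all $t$ on one set of full measure. On this set the path $\{\wt u_t:t\in[0,1]\}$ is a continuous image of $[0,1]$, hence a compact, in particular bounded, subset of $\S'(D)$; as $\S(D)$ is barrelled, a bounded family of functionals is equicontinuous, which is exactly the statement that it is dominated by a single seminorm $\|\cdot\|_\se$ (with $m=m(\omega)$). Transferring this bound to $u$ gives \eqref{mainsemi}.

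\emph{Part (ii).} Fix $t,x$ and a smooth cut-off $\chi$ equal to $1$ on $\{|y-x|\le1\}$ and to $0$ on $\{|y-x|\ge2\}$, and split
\[
P_{\eps}u_{t-\eps}(x)=\int_D p_\eps(x,y)\chi(y)u_{t-\eps}(y)\,dy+\la u_{t-\eps},(1-\chi)p_\eps(x,\cdot)\ra .
\]
The first integrand is supported in the compact set $\{|y-x|\le2\}$, so joint continuity of $u$ and the approximate-identity property of $p_\eps$ force this term to $u_t(x)$. In the second term $(1-\chi)p_\eps(x,\cdot)$ is a Schwartz function, namely the heat kernel cut away from its diagonal; every seminorm $\|(1-\chi)p_\eps(x,\cdot)\|_\se$ tends to $0$ as $\eps\to0$ because of the factor $e^{-c/\eps}$ coming from the Gaussian tail, so \eqref{mainsemi} forces this term to $0$. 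This proves \eqref{Pconv}. (For $D=[0,1]$ the second term is controlled directly by boundedness of $u$ on compacts.)

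\emph{Part (iii).} Here I freeze the test function on each grid interval and telescope. Differencing \eqref{uid} at the grid points $\tfrac{i-1}n,\tfrac in$ with the fixed test function $f_{(i-1)/n}$ (which holds simultaneously a.s.\ for the countably many grid test functions) gives the exact identity
\[
\H_{\frac in}(f_{\frac{i-1}n})-\H_{\frac{i-1}n}(f_{\frac{i-1}n})
=\la u_{\frac in}-u_{\frac{i-1}n},f_{\frac{i-1}n}\ra
-\int_{\frac{i-1}n}^{\frac in}\la u_s,\tfrac12\Delta f_{\frac{i-1}n}\ra\,ds
-\bigl(W_{\frac in}-W_{\frac{i-1}n}\bigr)(f_{\frac{i-1}n}).
\]
Summing over $i\le\lfloor nt\rfloor$, telescoping $\la u_{i/n},f_{i/n}\ra$ and separating each $u$-increment from the $f$-increment $\la u_{i/n},f_{i/n}-f_{(i-1)/n}\ra$, I obtain that the left-hand side of \eqref{eq:13_10_5} equals
\[
\la u_{\kappa_n(t)},f_{\kappa_n(t)}\ra-\la u_0,f_0\ra
-\sum_{i=1}^{\lfloor nt\rfloor}\la u_{\frac in},f_{\frac in}-f_{\frac{i-1}n}\ra
-\int_0^{\kappa_n(t)}\!\la u_s,\tfrac12\Delta f_{\kappa_n(s)}\ra\,ds
-\int_0^{\kappa_n(t)}\!\!\int_D f_{\kappa_n(s)}(x)\,W(ds,dx).
\]
As $n\to\infty$ each explicit term converges: the last integral to $\int_0^t\int_D f_s(x)W(ds,dx)$ in $L_2$ by It\^o's isometry, since $\int_0^1\|f_{\kappa_n(s)}-f_s\|_{L_2(D)}^2\,ds\to0$; the drift integral to $\int_0^t\la u_s,\tfrac12\Delta f_s\ra\,ds$, and the $f$-increment sum (rewritten as $\int_0^{\kappa_n(t)}\la u_{\kappa_n(s)+1/n},\partial_s f_s\ra\,ds$) to $\int_0^t\la u_s,\partial_s f_s\ra\,ds$, both by dominated convergence using the uniform bound \eqref{mainsemi}, the time-continuity of $s\mapsto\la u_s,\psi\ra$, and the uniform continuity of $s\mapsto f_s$ in $\S(D)$; and $\la u_{\kappa_n(t)},f_{\kappa_n(t)}\ra\to\la u_t,f_t\ra$ by joint continuity. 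Hence the partial sums converge in probability, which defines $\int_0^\cdot\H(ds,f_s)$ and gives \eqref{eq:13_10_5}; rearranging yields \eqref{identl41}, and the limit process is continuous and adapted because \eqref{identl41} expresses it through continuous adapted processes.

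\emph{Main obstacle.} The heart of the argument is part (i): securing, on a single exceptional null set, a time-uniform domination of $u_t$ by one Schwartz seminorm, which is what replaces the classical $\C_{tem}$ assumption. The two delicate points there are the correct invocation of Mitoma's theorem for a single (rather than a tight family of) $\S'(D)$-valued process, and the identification of the resulting modification $\wt u$ with the given random field $u$. Once \eqref{mainsemi} is available, parts (ii) and (iii) are routine, modulo the Gaussian tail estimate in (ii) and the dominated-convergence bookkeeping in (iii).
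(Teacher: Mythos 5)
Your proposal is correct and follows essentially the same route as the paper: Mitoma's theorem on the Fréchet space $\S(D)$ to obtain the uniform seminorm bound in (i), the smooth cut-off splitting of $P_\eps u_{t-\eps}(x)$ into a compactly supported part and a Gaussian-tail part controlled by \eqref{mainsemi} in (ii), and the telescoping of \eqref{uid} over grid points followed by term-by-term limits (It\^o isometry for the noise, dominated convergence via \eqref{mainsemi} for the drift and $f$-increment terms) in (iii). Your elaboration of (i) via compactness of the path in $\S'(D)$ and equicontinuity from barrelledness is just an unpacking of what the paper delegates to Mitoma's formula (2.12), and your slightly different bookkeeping in (iii) (taking $\la u_{\kappa_n(t)},f_{\kappa_n(t)}\ra\to\la u_t,f_t\ra$ directly rather than adding and subtracting $\la u_{\kappa_n(t)},f_t-f_{\kappa_n(t)}\ra$) is an immaterial variation.
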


\begin{proof}
(i). We see that thanks to \eqref{uid}  
\begin{equation}\label{cont}
	\text{for any $\phi\in\S(D)$ the process $r\mapsto \langle u_r,\phi\rangle$ is a.s. continuous}.
\end{equation}	
Then applying Mitoma's theorem \cite[Theorem~1 and formula (2.12)]{mitoma}  to the Fr\'eche space $\S(D)$, we get precisely  \eqref{mainsemi}.

(ii). Fix  $x\in D$, $t\in(0,1]$. Let $\chi\colon\R\to[0,1]$ be a $\C^\infty(\R)$ function such that $\chi(y)=1$ if $|y|\le1$, $\chi(y)=0$ if $|y|\ge2$, and $\chi(y)\in(0,1)$ for $|y|\in(1,2)$. Then 
\begin{align}\label{step1102}
P_\eps u_{t-\eps}(x)&=\la u_{t-\eps},p_\eps(x,\cdot)\ra =\la u_{t-\eps},p_\eps(x,\cdot)\chi(x-\cdot)\ra+\bigl\la u_{t-\eps},p_\eps(x,\cdot)(1-\chi(x-\cdot))\bigr\ra\nn\\
&=:J_1(\eps)+J_2(\eps).
\end{align}
Since $u$ is a.s.  continuous on $[t/2,t]\times([x-2,x+2]\cap D)$, it is easy to see that 
\begin{equation}\label{step111}
J_1(\eps)\to u_t(x),\quad\text{a.s.  as $\eps\to0$}. 
\end{equation}
To pass to the limit in $J_2(\eps)$, we use part (i) of the lemma. It follows from \eqref{mainsemi} that if $\omega\in\Omega'$ then for some $m(\omega)$, $\Gamma(\omega)$ we have 
\begin{equation*}
|J_2(\eps,\omega)|\le \Gamma(\omega)\|p_\eps(x,\cdot)(1-\chi(x-\cdot))\|_{\se(\omega)}\to 0,\quad\text{as $\eps\to0$}. 
\end{equation*}
Combining this with \eqref{step1102} and \eqref{step111}, we get \eqref{Pconv}.

(iii). Fix $f\in\C([0,1],\S(D))$. Let $n\in \N$. Using identity \eqref{uid}, we clearly have for any $k=1,..., n$ a.s.
\begin{align*}
\langle u_{\frac{k}n}, f_{\frac{k}n}\rangle &=\langle u_{\frac{k}n},f_{\frac{k}n}-f_{\frac{k-1}n}\rangle+\langle u_{\frac{k}n},f_{\frac{k-1}n}\rangle\\
&=\langle u_{\frac{k}n},f_{\frac{k}n}-f_{\frac{k-1}n}\rangle+\langle u_{\frac{k-1}n}, f_{\frac{k-1}n}\rangle 
	+\int_{\frac{k-1}n}^{\frac{k}n} \langle u_s, \frac{1}{2}\Delta f_{\frac{k-1}n}\rangle\,ds  +\H_{\frac{k}n}(f_{\frac{k-1}n})-\H_{\frac{k-1}n}(f_{\frac{k-1}n})\\
	&\phantom{=}+W_{\frac{k}n}(f_{\frac{k-1}n})-W_{\frac{k-1}n}(f_{\frac{k-1}n}).
\end{align*}
Summing these identities over $k$ we derive for any $t\in[0,1]$
\begin{align}\label{ident}
	\langle u_{\kappa_n(t)}, f_t\rangle&=\langle u_{\kappa_n(t)}, f_{\kappa_n(t)}\rangle+\langle u_{\kappa_n(t)}, f_t-f_{\kappa_n(t)}\rangle\nn\\
	&= \langle u_{0},f_{0}\rangle +\langle u_{\kappa_n(t)}, f_t-f_{\kappa_n(t)}\rangle+ \sum_{i=1}^{\lfloor nt\rfloor} 
		\langle u_{\frac{i}{n}},f_{\frac{i}{n}}-f_{\frac{i-1}{n}}\rangle
		+\int_{0}^{\kappa_n(t)} \langle u_s, \frac{1}{2}\Delta f_{\kappa_n(s)}\rangle\,ds\nn\\
		&\phantom{=}+\sum_{i=1}^{\lfloor nt\rfloor} (\H_{\frac{i}n}(f_{\frac{i-1}n})-\H_{\frac{i-1}n}(f_{\frac{i-1}n}))+\int_0^{\kappa_n(t)}\int_D f_{\kappa_n(s)}(x)\,W(ds,dx).
\end{align}

Fix now $t\in[0,1]$ and let us pass to the limit as $n\to\infty$ in this identity.  Using \eqref{cont} with $\phi=f_t\in\S(D)$, we get 
\begin{equation}\label{lemlim1}
\langle u_{\kappa_n(t)}, f_t\rangle\to   \langle u_t, f_t\rangle,\quad\text{a.s. as $n\to\infty$.} 
\end{equation}	
Next, we note 
\begin{equation}\label{limbefore}
\langle u_{\kappa_n(t)}, f_t-f_{\kappa_n(t)}\rangle+ \sum_{i=1}^{\lfloor nt\rfloor} 
		\langle u_{\frac{i}{n}},f_{\frac{i}{n}}-f_{\frac{i-1}{n}}\rangle=\int_0^{t} \langle u_{(\kappa_n(s)+\frac1n)\wedge \kappa_n(t)}, \frac{\d f_s}{\d s}\rangle\,ds.
\end{equation}
We note that  by \eqref{cont} for any $s\in[0,t]$ we have $\langle u_{(\kappa_n(s)+\frac1n)\wedge \kappa_n(t)}, \frac{\d f_s}{\d s}\rangle\to\langle u_{s}, \frac{\d f_s}{\d s}\rangle$  a.s. as $n\to\infty$. By part (i) of the lemma, we get
\begin{equation*}
\sup_{n\in\N}\sup_{r\in[0,t]}|\langle u_{(\kappa_n(r)+\frac1n)\wedge \kappa_n(t)}, \frac{\d f_r}{\d r}\rangle|<\infty\quad a.s.	
\end{equation*}
Hence by the Fubini theorem there exists a set $\Omega''\subset\Omega$ of full probability measure such that if $\omega\in\Omega''$, then for Lebesgue almost all $s\in[0,t]$ we have 
\begin{equation*}
\lim_{n\to\infty}\langle u_{(\kappa_n(s)+\frac1n)\wedge \kappa_n(t)}(\omega), \frac{\d f_s}{\d s}\rangle=\langle u_{s}, \frac{\d f_s}{\d s}\rangle\quad \text{and} \quad
	\sup_{n\in\N}\sup_{r\in[0,t]}|\langle u_{(\kappa_n(r)+\frac1n)\wedge \kappa_n(t)}(\omega), \frac{\d f_r}{\d r}\rangle|<\infty.	
\end{equation*}
Thus, by the dominated convergence theorem we can pass to the limit in \eqref{limbefore} and get 
\begin{equation}\label{lemlim2}
\int_0^{t} \langle u_{(\kappa_n(s)+\frac1n)\wedge \kappa_n(t)}, \frac{\d f_s}{\d s}\rangle\,ds\to \int_0^{t} \langle u_{s}, \frac{\d f_s}{\d s}\rangle\,ds,\quad\text{a.s. as $n\to\infty$.}
\end{equation}
For the next term in \eqref{ident} we have 
\begin{equation}\label{lemlim3}
\int_{0}^{\kappa_n(t)} \langle u_s, \frac{1}{2}\Delta f_{\kappa_n(s)}\rangle\,ds=
\int_{0}^t \langle u_s, \frac{1}{2}\Delta f_{\kappa_n(s)}\rangle\,ds-
\int_{\kappa_n(t)}^t \langle u_s, \frac{1}{2}\Delta f_{\kappa_n(s)}\rangle\,ds\to
\int_{0}^{t} \langle u_s, \frac{1}{2}\Delta f_s\rangle\,ds
\end{equation}
a.s. as $n\to\infty$. Here we used that thanks to part (i) of the lemma and the continuity of $f$ in $\S(D)$ topology 
\begin{equation*}
\sup_{s\in[0,t]}|\langle u_s,\Delta f_{\kappa_n(s)}-\Delta f_s\ra|\to0\,\,\,a.s.\quad\text{and}\quad \sup_{s,r\in[0,t]}|\langle u_s,\Delta f_r\ra|<\infty\,\,\,a.s.
\end{equation*}

Finally, using the Ito isometry, it is  easy to see that for any $\delta>0$
\begin{equation*}
\lim_{n\to\infty } \P\Bigl(\Bigl|\int_{0}^{\kappa_n(t)}\int_D f_{\kappa_n(s)}(x)W(ds,dx)- \int_{0}^t\int_D f_{s}(x)W(ds,dx)\Bigr|>\delta\Bigr)=0.
\end{equation*}
Combining this with \eqref{lemlim1}, \eqref{lemlim2}, \eqref{lemlim3} and substituting into \eqref{ident}, we see that for each $f$ there exists a measurable process $\int_0^\cdot \H(ds,f_s)\colon [0,1]\times \Omega\to\R$ such that \eqref{eq:13_10_5} holds and 
and for $t\in[0,1]$
\begin{equation*}
\langle u_t, f_t\rangle= \langle u_{0},f_{0}\rangle + \int_{0}^t \langle u_s, \frac{\partial  f_s}{\partial s}\rangle\,ds
		+\int_{0}^t \langle u_s, \frac{1}{2}\Delta f_s\rangle\,ds  +\int_0^t \H(ds,f_s)+\int_{0}^t\int_D f_{s}(x)W(ds,dx).
\end{equation*}
This concludes the proof of the lemma.
\end{proof}

Now we are ready to show that if  $u$ satisfies a ``weak-type'' identity \eqref{uid}, then it also satisfies a ``mild-type'' identity. 
\begin{lemma}
\label{l:txrepr}
Assume that the conditions of \cref{l:phitd} are satisfied. Then there exists a stochastic process $R_\H\colon[0,1]\times D\times\Omega\to\R$ such that
$\P$-a.s. $u$ satisfies the following equation 
\begin{equation}\label{eq:13_10_2}
	u_t(x)= P_tu_{0}(x)   +R_\H(t,x)+V_t(x),\quad  t\in[0,1],\,\, x\in D. 
\end{equation}
Further, $R_\H$ is a.s. continuous on $(0,1]\times D$  and for any fixed $t>0$, $x\in D$ we have 
\begin{equation}\label{eq:10_08_2}
\int_0^{t-\eps} \H(dr, p_{t-r}(x,\cdot))\to R_\H(t,x) 
\end{equation}
in probability as $\eps\to0$.
\end{lemma}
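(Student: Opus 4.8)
The plan is to define $R_\H$ as the defect in the mild decomposition and then to identify it with the limit appearing in \eqref{eq:10_08_2}. Concretely, I would set
\[
R_\H(t,x) := u_t(x) - P_tu_0(x) - V_t(x),\qquad (t,x)\in(0,1]\times D,
\]
so that \eqref{eq:13_10_2} holds by construction. The claimed continuity of $R_\H$ on $(0,1]\times D$ is then immediate: $u$ is jointly continuous there by hypothesis, $(t,x)\mapsto P_tu_0(x)$ is continuous on $(0,1]\times D$ because $u_0\in\Bsp(D)$, and $V$ admits a jointly continuous version by the standard moment bound for the stochastic convolution. Thus the whole content of the lemma is the approximation \eqref{eq:10_08_2}, and the strategy for that is to feed the backward heat kernel into \cref{l:phitd}(iii).

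To this end I would fix $t\in(0,1]$, $x\in D$ and, for $\eps\in(0,t)$, use the time-dependent test function $f^\eps_s := p_{(t-s)\vee\eps}(x,\cdot)$. On $[0,t-\eps]$ this coincides with $p_{t-s}(x,\cdot)$, which lies in $\S(D)$ for each $s$ (by \cref{c:conv}), depends smoothly on $s$ in the $\S(D)$-topology, and solves the backward heat equation $\partial_s f^\eps_s + \tfrac12\Delta f^\eps_s = 0$ for $s<t-\eps$; the freezing for $s\ge t-\eps$ serves only to place $f^\eps$ in $\C([0,1],\S(D))$ and is irrelevant, since \eqref{identl41} evaluated at the terminal time $t-\eps$ depends on $f^\eps$ only through its restriction to $[0,t-\eps]$. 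Applying \eqref{identl41} at that terminal time makes the drift--diffusion integral $\int_0^{t-\eps}\langle u_s,\partial_s f^\eps_s+\tfrac12\Delta f^\eps_s\rangle\,ds$ vanish identically, and after recognizing $\langle u_{t-\eps},p_\eps(x,\cdot)\rangle = P_\eps u_{t-\eps}(x)$ and $\langle u_0,p_t(x,\cdot)\rangle = P_tu_0(x)$ one is left with
\[
\int_0^{t-\eps}\H(ds,p_{t-s}(x,\cdot)) = P_\eps u_{t-\eps}(x) - P_tu_0(x) - \int_0^{t-\eps}\!\int_D p_{t-s}(x,y)\,W(ds,dy).
\]

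It then remains to send $\eps\to0$ in this identity. The left endpoint converges, $P_\eps u_{t-\eps}(x)\to u_t(x)$ almost surely, which is precisely \eqref{Pconv} from \cref{l:phitd}(ii); and the truncated Wiener integral converges to $V_t(x)$ in $L_2(\Omega)$, since by the It\^o isometry its tail has variance $\int_{t-\eps}^t\int_D p_{t-s}(x,y)^2\,dy\,ds\to0$. Hence the right-hand side converges in probability to $u_t(x)-P_tu_0(x)-V_t(x)=R_\H(t,x)$, which is \eqref{eq:10_08_2}. I expect the only genuinely delicate input to be the convergence $P_\eps u_{t-\eps}(x)\to u_t(x)$, as it requires controlling $\langle u_{t-\eps},p_\eps(x,\cdot)\rangle$ uniformly while the mollifier concentrates at $x$; this, however, has already been isolated and established via Mitoma's theorem in \cref{l:phitd}(ii). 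Within the present lemma the remaining care is thus largely bookkeeping: exhibiting a legitimate $\S(D)$-valued time-dependent test function despite the blow-up of $p_{t-s}(x,\cdot)$ as $s\uparrow t$, which the $\eps$-truncation resolves.
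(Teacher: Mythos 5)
Your proposal is correct and follows essentially the same route as the paper: define $R_\H(t,x):=u_t(x)-P_tu_0(x)-V_t(x)$, apply \cref{l:phitd}(iii) to the backward heat kernel test function $s\mapsto p_{t-s}(x,\cdot)$ stopped at time $t-\eps$ so that the drift--diffusion integral vanishes, and then let $\eps\to0$ using \eqref{Pconv} for $P_\eps u_{t-\eps}(x)\to u_t(x)$ and the It\^o isometry for $P_\eps V_{t-\eps}(x)\to V_t(x)$. The only cosmetic difference is that the paper first derives the identity for a general $\phi\in\S(D)$ with $f_s=P_{t-s}\phi$ and then substitutes $\phi=p_\eps(x,\cdot)$ with $t-\eps$ in place of $t$, whereas you build the $\eps$-truncation directly into the test function.
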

\begin{proof}
\textbf{Step~1}. Take arbitrary $\phi\in\S(D)$.
Fix $t\in[0,1]$ and apply \cref{l:phitd}(iii) with $f_s:=P_{t-s}\phi$, $s\in[0,t]$. Clearly,  $f\in\C([0,1],S(D))$.
Therefore all the conditions of \cref{l:phitd}(iii) are satisfied. Note that
\begin{equation*}
\frac{\partial  f_s}{\partial s}+  \frac{1}{2}\Delta f_s=0, \quad s\in[0,t].
\end{equation*}
Therefore, \cref{l:phitd}(iii) implies that there exists a process $\int_0^\cdot \H(dr, {P_{t-r}\phi})\colon[0,t]\times\Omega\to\R$, where the integral is defined in \eqref{eq:13_10_5}, such that 
\begin{equation*}
\langle u_s,P_{t-s}\phi\rangle= \langle u_{0},P_t \phi\rangle   +\int_0^s \H(dr, {P_{t-r}\phi})
+\int_{0}^s\int_D P_{t-r}\phi(x)W(dr,dx),\quad  s\in [0,t].
\end{equation*}
By choosing $s=t$, and using  that $P_t$ is self adjoint we get  
\begin{equation}\label{eq:16_08_3}
\langle u_t,\phi\rangle= \langle P_t u_{0},\phi\rangle   +\int_0^t \H(dr, {P_{t-r}\phi})+\int_{0}^t\int_D P_{t-r}\phi(x)W(dr,dx),\quad  t\in [0,1].
\end{equation}

\textbf{Step 2}. For arbitrary fixed $x\in D$, $\eps>0$, take $\phi:=p_\eps(x,\cdot)$. Clearly, $\phi\in\S(D)$. Substituting this $\phi$ in~\eqref{eq:16_08_3} with $t-\eps$ in place of $t$, 
we get for $t\in[\eps,1]$
\begin{align}
P_{\eps} u_{t-\eps}(x)&= P_{t}u_0(x)  +\int_0^{t-\eps} \H(dr, p_{t-r}(x,\cdot))+\int_{0}^{t-\eps}\int_D p_{t-r}(x,y)W(dr,dy)\nn\\
&= P_{t}u_0(x)  +\int_0^{t-\eps} \H(dr, p_{t-r}(x,\cdot))+P_{\eps}V_{t-\eps}(x),\quad \P-a.s.\label{eq:16_08_4}
\end{align}
Denote 
\begin{equation*}
R_\H(t,x):=	u_t(x)-P_tu_{0}(x)-V_t(x),\quad x\in D,\,\, t>0.
\end{equation*}
Then, by definition \eqref{eq:13_10_2} holds. 
By \cref{l:phitd}(ii), $P_\eps u_{t-\eps}(x)\to u_t(x)$ a.s. as $\eps\to0$. 
We also have by Ito's isometry
$P_\eps V_{t-\eps}(x)\to V_t(x)$ in $L_2(\Omega)$  as $\eps\to0$. Thus, recalling  identity \eqref{eq:16_08_4}, we get \eqref{eq:10_08_2}.
\end{proof}

\subsection{Proof of \cref{thm:main0}: (a) implies (b)}
Let us recall that if $u$ is a weak regularized solution, then, by definition, identity \eqref{uid} holds with $\H = \K$, where $\K$ is defined in \cref{Def:weakrsol}. Therefore, $u$ satisfies identity \eqref{eq:13_10_2} for a process $R_\K$ defined in \eqref{eq:10_08_2}. Thus, it remains to show that $R_\K$ is, in fact, the drift term $K$ that appears in \cref{Def:mildrsol}. 

It follows from \cref{l:43}, that the limit in \eqref{idmildreg} holds for a certain stochastic process $H$; therefore, we have to show that $R_\K$ coincides with $H$. This is done in the following lemma. 
\begin{lemma}
\label{prop:10_11_1}
Let $\alpha>-3/2$, $b\in\C^\alpha$, 
$u_0\in\Bsp(\0)$. Let $u$ be a weak regularized solution  of equation  \eqref{SPDE} with the initial condition $u_0$ and suppose that $u\in\V(5/8)$.

Then for any  sequence of functions $(b^n)_{n\in\Z_+}$ in
      $\C_b^\infty$ such that $b^n\to b$ in $\C^{\alpha-}$, $t\in[0,1]$, $x\in D$ we have
\begin{equation}
\label{eq:3_11_1a}
\int_0^t\int_D p_{t-r}(x,y) b^n(u_r(y))\,dydr\to R_\K(t,x)\quad  \text{in probability as $n\to\infty$.}
\end{equation}
\end{lemma}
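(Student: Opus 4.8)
The plan is to identify the limit of the left-hand side of \eqref{eq:3_11_1a} using \cref{l:43}, and then to show that this limit equals $R_\K$. Since $\alpha>-\tfrac32$, fix $\gamma\in(-\tfrac32,0)$ with $\gamma\le\alpha$; then $b\in\C^\gamma$, the given sequence satisfies $b^n\to b$ in $\C^{\gamma-}$, and $\gamma+4\cdot\tfrac58>1$, so \cref{l:43}(i) applies with $f=b$, $\tau=\tfrac58$. It produces a jointly continuous adapted process $H^b$ such that, by \eqref{firstflim}, the left-hand side of \eqref{eq:3_11_1a} converges in probability (uniformly on compacts) to $H^b_t(x)$. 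Writing $H^{b,n}_t(x):=\int_0^t\int_\0 p_{t-r}(x,y)b^n(u_r(y))\,dy\,dr$, it therefore remains to prove that $H^b_t(x)=R_\K(t,x)$ a.s. As both processes are a.s. continuous on $(0,1]\times D$, it suffices to fix $t$ and show that $\langle H^b_t,\phi\rangle=\langle P_tu_0+H^b_t+V_t,\phi\rangle-\langle P_tu_0+V_t,\phi\rangle$ matches $\langle u_t,\phi\rangle$ through the weak identity, for every $\phi\in\S(D)$.

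The heart of the argument is the identity $\langle H^b_t,\phi\rangle=\int_0^t\K(dr,P_{t-r}\phi)$ a.s., where the right-hand side is the integral constructed in \cref{l:phitd}(iii) (it exists by \cref{l:txrepr}, Step~1, since $s\mapsto P_{t-s}\phi$ lies in $\C([0,1],\S(D))$). To prove it I would work with the smooth approximations $b^n$: by Fubini and the symmetry of the heat kernel, $\langle H^{b,n}_t,\phi\rangle=\int_0^t\langle b^n(u_r),P_{t-r}\phi\rangle\,dr$, and I would pass to the limit $n\to\infty$ on both sides. On the left, \cref{L:mainml} (applied with $X_r=p_{t-r}(x,\cdot)$, $\Gamma=1$, $\nu=0$, $\psi=u-V$) gives the bound $\|H^{b,n}_t(x)\|_{L_2}\le C\|b^n\|_{\C^\gamma}(1+[u-V]_{\C^{\frac58}L_2([0,1])})$, uniform in $x$ and $n$; together with the uniform-on-compacts convergence $H^{b,n}_t\to H^b_t$ from \eqref{firstflim} and the Schwartz decay of $\phi$ (to control the tail in $|x|$), this yields $\langle H^{b,n}_t,\phi\rangle\to\langle H^b_t,\phi\rangle$ in probability.

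The right-hand side is the main obstacle, because the test function $P_{t-r}\phi$ varies with $r$, so the time integral must be reconciled with the Riemann-sum definition of $\int_0^t\K(dr,P_{t-r}\phi)$. I would argue as follows. For a fixed mesh $N$ the approximating sum is $\Sigma_N:=\sum_{i=1}^{\lfloor Nt\rfloor}\big(\K_{\frac iN}(P_{t-\frac{i-1}N}\phi)-\K_{\frac{i-1}N}(P_{t-\frac{i-1}N}\phi)\big)$; being a finite sum, and since for each $\psi$ the integrals $\int_0^s\langle\psi,b^n(u_r)\rangle\,dr$ are $L_2$-Cauchy in $n$ (again by \cref{L:mainml}) with $L_2$-limit $\K_s(\psi)$, we have $\Sigma_N=\lim_{n\to\infty}\int_0^{\kappa_N(t)}\langle P_{t-\kappa_N(r)}\phi,b^n(u_r)\rangle\,dr$ in $L_2$. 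Comparing with $\langle H^{b,n}_t,\phi\rangle=\int_0^t\langle P_{t-r}\phi,b^n(u_r)\rangle\,dr$, the difference splits into a term carrying $(P_{t-\kappa_N(r)}-P_{t-r})\phi$ and a term over $[\kappa_N(t),t]$. Using $\|(P_{t-\kappa_N(r)}-P_{t-r})\phi\|_{L_1}=\|(P_{r-\kappa_N(r)}-I)P_{t-r}\phi\|_{L_1}\le CN^{-1}\|\Delta\phi\|_{L_1}$ and $t-\kappa_N(t)\le N^{-1}$, \cref{L:mainml} bounds both terms in $L_2$ by $C\|b^n\|_{\C^\gamma}N^{-c}(1+[u-V]_{\C^{\frac58}L_2([0,1])})$ for some $c>0$, uniformly in $n$. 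Hence $\sup_n\big\|\Sigma_N-\langle H^{b,n}_t,\phi\rangle\big\|_{L_2}\to0$ as $N\to\infty$, which permits interchanging the limits and gives $\Sigma_N\to\langle H^b_t,\phi\rangle$ in $L_2$; since also $\Sigma_N\to\int_0^t\K(dr,P_{t-r}\phi)$ in probability by definition, the desired identity follows.

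To conclude, I would combine this identity with \eqref{eq:16_08_3} (which holds with $\H=\K$) and the stochastic Fubini relation $\int_0^t\int_D P_{t-r}\phi(y)\,W(dr,dy)=\langle V_t,\phi\rangle$ to obtain $\langle u_t,\phi\rangle=\langle P_tu_0,\phi\rangle+\langle H^b_t,\phi\rangle+\langle V_t,\phi\rangle$ a.s., for every $\phi\in\S(D)$. Testing against a countable family of $\phi$ separating continuous functions and using the a.s. continuity in $x$ of $u_t$, $P_tu_0$, $H^b_t$ and $V_t$, this forces $u_t(\cdot)=P_tu_0+H^b_t+V_t$ pointwise, i.e. $R_\K(t,\cdot)=H^b_t$ by the definition of $R_\K$ in \cref{l:txrepr}. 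Together with the first paragraph, this establishes \eqref{eq:3_11_1a}. I expect the genuinely delicate point to be the limit interchange in the third paragraph, where the uniform-in-$n$ estimates of \cref{L:mainml} are precisely what make the moving-test-function Riemann sums converge to the abstract integral $\int_0^t\K(dr,P_{t-r}\phi)$.
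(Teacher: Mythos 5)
Your proof is correct, but it follows a genuinely different architecture from the paper's. The paper works pointwise at $(t,x)$ throughout: it characterizes $R_\K(t,x)$ as the limit in probability of $\int_0^{t-\eps}\K(dr,p_{t-r}(x,\cdot))$ via \eqref{eq:10_08_2}, and then proves \eqref{eq:3_11_1a} by the three-term decomposition \eqref{eq:decompos}, where the $\eps$-cutoff is forced on it because the test functions $p_{t-r}(x,\cdot)$ degenerate in $\S(D)$ as $r\to t$; the term $I_1^\eps$ is then controlled by a Riemann-sum argument with moving kernels (\cref{lem:3_11_1}), $I_2^\eps$ by \cref{l:2term}, and $I_3^\eps$ by \cref{l:txrepr}. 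You instead invoke \cref{l:43}(i) up front to get the limit $H^b$ (which the paper only brings in afterwards, in the proof that (a) implies (b)), and then identify $H^b_t$ with $R_\K(t,\cdot)$ by testing against $\phi\in\S(D)$: the family $s\mapsto P_{t-s}\phi$ is uniformly nice up to $s=t$, so no $\eps$-regularization is needed, and your Riemann-sum comparison of $\int_0^t\langle P_{t-r}\phi,b^n(u_r)\rangle\,dr$ with the sums defining $\int_0^t\K(dr,P_{t-r}\phi)$ — using the $L_1$-bound $\|(P_h-I)P_{t-r}\phi\|_{L_1}\le Ch\|\Delta\phi\|_{L_1}$ together with the uniform-in-$n$ estimates of \cref{L:mainml} — is sound and is essentially the same computation the paper performs in \cref{l:weakid} for the converse implication. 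What your route buys is the complete avoidance of the singular kernels near $r=t$ and of the limit \eqref{eq:10_08_2}; what it costs is the extra final step (stochastic Fubini in \eqref{eq:16_08_3}, a countable separating family of test functions, and the joint continuity of $u$, $H^b$, $V$, $P_tu_0$) needed to upgrade the tested identity $\langle u_t,\phi\rangle=\langle P_tu_0+H^b_t+V_t,\phi\rangle$ to the pointwise one, whereas the paper's argument lands directly on the point evaluation. Both proofs rest on the same two pillars, \cref{L:mainml} and \cref{l:phitd}(iii), so the difference is organizational rather than in the underlying estimates; your version arguably makes the subsequent deduction of \cref{thm:main0}(a)$\Rightarrow$(b) immediate, since the matching of $H^b$ with $R_\K$ is already built in.
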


To obtain this result we fix $f\in\C_b^\infty$ and for $t\in[0,1]$, $x\in D$, 
$\eps\in(0,t)$ use the following decomposition
\begin{align}\label{eq:decompos}
&\Bigl|\int_0^t\int_D p_{t-r}(x,y) f(u_r(y))\,dy\,dr-R_\K(t,x)\Bigr|\nn\\
 &\qquad
\le  \Bigl|\int_0^{t-\eps}\int_D p_{t-r}(x,y) f(u_r(y))\,dy\,dr-
 \int_0^{t-\eps} \K(dr, p_{t-r}(x,\cdot))\Bigr|\nn\\
 &\qquad\phantom{\le}+ 
 \Bigl|\int_{t-\eps}^t\int_D p_{t-r}(x,y) f(u_r(y))\,dy\,dr\Bigr|+ 
\Bigl|R_\K(t,x)-\int_0^{t-\eps} \K(dr, p_{t-r}(x,\cdot)) \Bigr|\nn
\\&\qquad =:I_1^{\eps}(t,x)+I_2^{\eps}(t,x)+I_3^{\eps}(t,x), 
\end{align}
Here and later on, for a suitable test function $f\in\C([0,1]\times D,\R)$, the nonlinear integral $\int_0^{\cdot} \K(ds, f_s)$ is defined as in \eqref{eq:13_10_5} with $\H=\K$.

We begin with the analysis of  the first term in \eqref{eq:decompos}.

\begin{lemma}
\label{lem:3_11_1}
Assume that the assumptions of \cref{prop:10_11_1} holds. Let $\alpha'\in(-\frac32,\alpha)$. Then there exists a constant $C=C(\alpha,\alpha')$ such that 
for any $x\in D$, $t\in[0,1]$, $\eps\in(0,t)$, $f\in\C^\infty_b$, $\delta>0$ we have
\begin{equation}\label{desbound}
\P(I^{\eps}_{1}(t,x)>\delta)\le C \delta^{-1}(1+[u-V]_{\C^{\frac58}L_2([0,1])})\|f-b\|_{\C^{\alpha'}}.
\end{equation}
\end{lemma}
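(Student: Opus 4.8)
The plan is to estimate $I_1^{\eps}(t,x)$ in $L_2(\Omega)$ \emph{uniformly in $\eps$} and then apply Markov's inequality; the exponent $\delta^{-1}$ and the first power of $\|f-b\|_{\C^{\alpha'}}$ in \eqref{desbound} are exactly what a first–moment bound produces. The two quantities inside $I_1^{\eps}$ are the genuine space–time integral $A:=\int_0^{t-\eps}\int_D p_{t-r}(x,y)f(u_r(y))\,dy\,dr$ against the fixed smooth $f$, and the nonlinear integral $L:=\int_0^{t-\eps}\K(dr,p_{t-r}(x,\cdot))$ against the random distribution $\K$. Since $\K$ is by definition the probability limit of the smooth integrals against an approximating sequence $b^k\to b$ in $\C^{\alpha-}$ (see \eqref{Kcurlydef}), the difference $A-L$ ought to be controlled by $f-b$, and the natural tool is the uniform integral bound \cref{L:mainml} applied to the bounded functions $f-b^k$.

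To make the comparison rigorous I would first discretize both objects on the grid $\{i/n\}$. With the frozen test functions $\phi_i:=p_{t-\frac{i-1}n}(x,\cdot)$, set
\[
S_n^g:=\sum_{i=1}^{\lfloor n(t-\eps)\rfloor}\int_{\frac{i-1}n}^{\frac in}\!\!\int_D p_{t-\frac{i-1}n}(x,y)\,g(u_r(y))\,dy\,dr,\qquad \Sigma_n:=\sum_{i=1}^{\lfloor n(t-\eps)\rfloor}\bigl(\K_{\frac in}(\phi_i)-\K_{\frac{i-1}n}(\phi_i)\bigr),
\]
and write $I_1^{\eps}\le|A-S_n^f|+|S_n^f-\Sigma_n|+|\Sigma_n-L|$. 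The first term tends to $0$ as $n\to\infty$ deterministically, since $f$ is bounded, $\int_D|p_{t-r}(x,y)-p_{t-\kappa_n(r)}(x,y)|\,dy\to0$ uniformly for $t-r\ge\eps$, and the leftover interval of length $<1/n$ is negligible; the third term tends to $0$ in probability by the very definition \eqref{eq:13_10_5} of the nonlinear integral coming from \cref{l:phitd}(iii) and \cref{l:txrepr}.

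The heart of the matter is the middle term, where the time–discretization limit $n\to\infty$ and the drift–regularization limit $k\to\infty$ must be interchanged. For each fixed $n$, applying the defining property \eqref{Kcurlydef} of $\K$ to each of the finitely many increments gives $\Sigma_n=\lim_{k\to\infty}S_n^{b^k}$ in probability. Moreover $S_n^f-S_n^{b^k}=\int_0^{T_n}\int_D p_{t-\kappa_n(r)}(x,y)\,(f-b^k)(u_r(y))\,dy\,dr$ with $T_n:=\lfloor n(t-\eps)\rfloor/n$, and since $\int_D p_{t-\kappa_n(r)}(x,y)\,dy=1$, I can invoke \cref{L:mainml} with $X_r=p_{t-\kappa_n(r)}(x,\cdot)$, $\nu=0$, $\Gamma=1$, $\tau=\frac58$, $\psi=u-V$ and $\gamma\in(-\frac32,0)$ satisfying $\gamma\le\alpha'$ (using $\|\cdot\|_{\C^\gamma}\le\|\cdot\|_{\C^{\alpha'}}$ if $\alpha'\ge0$), applied to the bounded function $f-b^k$; here $\gamma+4\tau>1$ holds and $T_n\le1$ absorbs the time powers. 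This yields $\|S_n^f-S_n^{b^k}\|_{L_2}\le C\|f-b^k\|_{\C^{\alpha'}}(1+[u-V]_{\C^{\frac58}L_2([0,1])})$ uniformly in $n,k,\eps$. Letting $k\to\infty$, using $\|f-b^k\|_{\C^{\alpha'}}\to\|f-b\|_{\C^{\alpha'}}$ together with Fatou's lemma along a subsequence realizing $S_n^{b^k}\to\Sigma_n$ a.s., gives the uniform-in-$n$ bound $\|S_n^f-\Sigma_n\|_{L_2}\le C\|f-b\|_{\C^{\alpha'}}(1+[u-V]_{\C^{\frac58}L_2([0,1])})$.

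Finally I would assemble the estimate. For each $n$, Markov's inequality bounds $\P(|S_n^f-\Sigma_n|>\delta/3)$ by $3\delta^{-1}\|S_n^f-\Sigma_n\|_{L_2}$, a quantity independent of $n$, while $\P(|A-S_n^f|>\delta/3)$ and $\P(|\Sigma_n-L|>\delta/3)$ vanish as $n\to\infty$. Since the left-hand side $\P(I_1^{\eps}>\delta)$ does not depend on $n$, passing to the limit $n\to\infty$ produces exactly \eqref{desbound}. The only genuine obstacle is the double-limit interchange just described; it is precisely the uniformity in $n$ of the moment bound from \cref{L:mainml} — made possible by $\int_D p_{t-\kappa_n(r)}(x,y)\,dy=1$, so that $\sup_r\|X_r\|_{L_1(D)}=1$ independently of $\eps$ — that resolves it and simultaneously yields the $\eps$-independence of the constant.
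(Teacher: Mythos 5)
Your proposal is correct and follows essentially the same route as the paper: discretize time on the grid $\{i/n\}$, freeze the kernel, identify the Riemann sums of $\K$ with $\lim_k S_n^{b^k}$ via \eqref{Kcurlydef}, control $S_n^f-S_n^{b^k}$ by \cref{L:mainml} with $\Gamma=1$ and pass to $k\to\infty$ by Fatou, then use \eqref{eq:13_10_5} for the last piece and let $n\to\infty$ after Chebyshev. The only (cosmetic) differences are that you bound the kernel-replacement error pathwise using boundedness of $f$ rather than through \cref{L:mainml}, and you absorb the leftover interval $[\kappa_n(t-\eps),t-\eps]$ into that deterministic term instead of into the $\K$-comparison term (where the paper picks up an extra $n^{-1/2}\|f\|_{\C^\alpha}$ that vanishes in the limit).
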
 
\begin{proof} Fix $x\in D$, $t\in[0,1]$,  $\eps\in(0,t)$, $\delta>0$. 
 Take arbitrary $n\in \N$. We further decompose $I_1^{\eps}$ and write 
\begin{align}\label{decompivan}
I_1^{\eps}(t,x)&\le 
\Bigl|\int_0^{t-\eps}\int_D (p_{t-r}(x,y)-  p_{t-\kappa_n(r)}(x,y)) f(u_r(y))\,dy\,dr\Bigr|\nn\\
&\phantom{\le}+\Bigl|
\int_0^{t-\eps}\int_D p_{t-\kappa_n(r)}(x,y) f(u_r(y))\,dy\,dr-\sum_{i=1}^{\lfloor n(t-\eps)\rfloor} (\K_{\frac{i}n}(p_{t-\frac{i-1}n}(x,\cdot))-\K_{\frac{i-1}n}(p_{t-\frac{i-1}n})(x,\cdot))\Bigr|\nn\\
&\phantom{\le}+\Bigl|
\sum_{i=1}^{\lfloor n(t-\eps)\rfloor} (\K_{\frac{i}n}(p_{t-\frac{i-1}n}(x,\cdot))-\K_{\frac{i-1}n}(p_{t-\frac{i-1}n})(x,\cdot))-\int_0^{t-\eps} \K(dr, p_{t-r}(x,\cdot))\Bigr|\nn\\&
=:I^{n,\eps}_{1,1}(t,x)+I^{n,\eps}_{1,2}(t,x)+I^{n,\eps}_{1,3}(t,x). 
\end{align}

We bound $I^{n,\eps}_{1,1}$ using \cref{L:mainml} with $\psi:=u-V$, $T:=t-\eps$, $\gamma=\alpha$, $\tau=\frac58$, $X_r:=p_{t-r}(x,\cdot)-  p_{t-\kappa_n(r)}(x,\cdot)$, where $r\in[0,t-\eps]$, $\nu=0$. It is easy to see that  $\alpha+4\tau>1$ thanks to the standing assumption $\alpha>-3/2$. Further, for $r\in[0,t-\eps]$
\begin{equation*}
\int_D |p_{t-r}(x,y)-  p_{t-\kappa_n(r)}(x,y)|\,dy\le C (t-\kappa_n(r))^{-1/2}|r-\kappa_n(r)|\le Cn^{-1}\eps^{-\frac12},
\end{equation*}
for some $C>0$ independent of $n$, $x$, see, e.g. \cite[Lemma~C.2]{ABLM24}. Thus, condition \eqref{con.k} holds with $\Gamma=Cn^{-1}\eps^{-\frac12}$.  
Therefore all the assumptions of \cref{L:mainml} are satisfied and we get from \eqref{Bound1K}
\begin{equation*}
\|I^{n,\eps}_{1,1}(t,x)\|_{L_2(\Omega)}\le C \|f\|_{\C^\alpha} n^{-1}\eps^{-\frac12}(1+[u-V]_{\C^{\frac58}L_2([0,1])}),\quad t\in[\eps,1],\, x\in D,
\end{equation*}
for $C=C(\alpha)$. Since $u\in\V(\frac58)$, we have $[u-V]_{\C^{\frac58}L_2([0,1])}<\infty$. Hence, by the Chebyshev inequality,
\begin{equation}\label{ioned}
\lim_{n\to\infty}\P(I^{n,\eps}_{1,1}(t,x)>\delta)=0.
\end{equation}

Next, we bound $I^{n,\eps}_{1,2}$. Let $(b^M)_{M\in\Z_+}$ be a sequence of functions in
      $\C_b^\infty$ such that $b^M\to b$ in $\C^{\alpha-}$. By the definition of $\K$ in \eqref{Kcurlydef}. 
\begin{align*}
&\sum_{i=1}^{\lfloor n(t-\eps)\rfloor} (\K_{\frac{i}n}(p_{t-\frac{i-1}n}(x,\cdot))-\K_{\frac{i-1}n}(p_{t-\frac{i-1}n}(x,\cdot)))\\
&\qquad=\lim_{M\to\infty} \int_0^{\kappa_n(t-\eps)}\int_D p_{t-\kappa_n(r)}(x,y) b^M(u_r(y))\,dy\,dr,
\end{align*}
where the limit is in probability. Hence, Fatou's lemma implies
\begin{align}\label{itwon}
\|I^{n,\eps}_{1,2}(t,x)\|_{L_2(\Omega)}&\le \lim_{M\to\infty }\Bigl\|\int_0^{t-\eps}\int_D p_{t-\kappa_n(r)}(x,y) (f-b^M)(u_r(y))\,dy\,dr\Bigr\|_{L_2(\Omega)}\nn\\
&\phantom{\le}+
\Bigl\|\int_{\kappa_n(t-\eps)}^{t-\eps}\int_D p_{t-\kappa_n(r)}(x,y) f(u_r(y))\,dy\,dr\Bigr\|_{L_2(\Omega)}.
\end{align}
To bound both  terms in the right-hand side of \eqref{itwon} the above inequality we take any $\alpha'\in(-\frac32,\alpha)$ and use again \cref{L:mainml}  with $\psi:=u-V$, $T:=t-\eps$, $\gamma=\alpha'$, $\tau=\frac58$, $X_r:=p_{t-\kappa_n(r)}(x,\cdot)$, where $r\in[0,t-\eps]$, $\nu=0$. For the first integral term, we take $f-b^M$ in place of $f$ in the lemma; note that  $f-b^M$ is bounded.  Since 
\begin{equation*}
\int_D p_{t-\kappa_n(r)}(x,y)\,dy=1,
\end{equation*}
and $\alpha'+4\tau>1$, we see that \eqref{con.k} holds with $\Gamma=1$. Thus all the assumptions of \cref{L:mainml} are satisfied and we derive for any $t\in[\eps,1]$, $x\in D$
\begin{align*}
\|I^{n,\eps}_{1,2}(t,x)\|_{L_2(\Omega)}&\le  C (1+[u-V]_{\C^{\frac58}L_2([0,1])})\bigl(\lim_{M\to\infty }\|f-b^M\|_{\C^{\alpha'}}+ n^{-1/2}\|f\|_{\C^{\alpha'}}\bigr)\\
&\le  C (1+[u-V]_{\C^{\frac58}L_2([0,1])})\bigl(\|f-b\|_{\C^{\alpha'}}+ n^{-1/2}\|f\|_{\C^{\alpha}}\bigr),
\end{align*}
where $C=C(\alpha,\alpha')$ and we used that $b^M\to b$ in $\C^{\alpha'}$ because $\alpha'<\alpha$ and $b^M\to b$ in $\C^{\alpha-}$. Using again that  $[u-V]_{\C^{\frac58}L_2([0,1])}<\infty$, we get 
\begin{equation}\label{itwod}
\lim_{n\to\infty}\P(I^{n,\eps}_{1,2}(t,x)>\delta)\le C \delta^{-1}(1+[u-V]_{\C^{\frac58}L_2([0,1])})\|f-b\|_{\C^{\alpha'}}.
\end{equation}

Finally, to bound $I^{n,\eps}_{1,3}$ we apply \cref{l:phitd}(iii) for the function $f_{r}:=p_{t-r}(x,\cdot)$, $r\in[0,t-\eps]$. It is easy to see that $f\in\C([0,1],\S(D))$. Thus, all the assumptions of the lemma are satisfied.  Hence, by \eqref{eq:13_10_5} applied with  $t-\eps$ in place of $t$ we get for any $\delta>0$ 
\begin{equation}\label{threed}
\lim_{n\to\infty}\P(I^{n,\eps}_{1,3}(t,x)>\delta)=0.
\end{equation}
Recalling now 
  decomposition \eqref{decompivan}, we get for any $\delta>0$, $n\in\N$
\begin{equation*}
\P(I^{\eps}_{1}(t,x)>\delta)\le
\P(I^{n,\eps}_{1,1}(t,x)>\frac\delta3)+
\P(I^{n,\eps}_{1,2}(t,x)>\frac\delta3)+\P(I^{n,\eps}_{1,3}(t,x)>\frac\delta3). 
\end{equation*}
By passing to the limit in the right-hand side of the above inequality as $n\to\infty$, and using  \eqref{ioned},  \eqref{itwod}, \eqref{threed}, we finally get 
\begin{equation*}
\P(I^{\eps}_{1}(t,x)>\delta)\le C \delta^{-1}(1+[u-V]_{\C^{\frac58}L_2([0,1])})\|f-b\|_{\C^{\alpha'}},
\end{equation*}
where $C=C(\alpha,\alpha')$, which is the desired bound \eqref{desbound}.
\end{proof}

To treat  the remaining terms in \eqref{eq:decompos}, we need the following useful bound.
\begin{lemma}
\label{l:2term}
Under the assumptions of \cref{prop:10_11_1}, there exists a constant $C=C(\alpha)$ such that 
for any $\delta >0$, $(s,t)\in\Delta_{[0,1]}$, $x\in D$, $f\in\C^\infty_b$ 
\begin{equation*}
\P\Bigl(\Bigl|\int_s^{t}\int_D p_{t-r}(x,y) f(u_r(y))\,dy\,dr\Bigr|>\delta)\le C \delta^{-1}\|f\|_{\C^\alpha} (t-s)^{\frac12}(1+[u-V]_{\C^{\frac58}L_2([0,1])})
\end{equation*}
\end{lemma}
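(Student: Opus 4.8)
The plan is to reduce the tail bound to an $L_2(\Omega)$ moment estimate and then finish with Markov's inequality. The whole point is that the integrand has exactly the structure $f(u_r(y))=f(V_r(y)+(u_r-V_r)(y))$, i.e.\ $f$ evaluated at a perturbation of the stochastic convolution, weighted by the heat kernel $p_{t-r}(x,\cdot)$. This is precisely the situation covered by \cref{L:mainml}, so the strategy is: first extract a quantitative $L_2$ bound from that proposition, then convert it into a probability bound.

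Concretely, I would set $\psi:=u-V$ and apply \cref{L:mainml} on the time interval $[0,T]$ with $T:=t$, taking $X_r:=p_{t-r}(x,\cdot)$, $\gamma:=\alpha$, $\tau:=\tfrac58$, $\nu:=0$ and $m:=2$. The two hypotheses are immediate to verify. The condition $\gamma+4\tau>1$ reads $\alpha+\tfrac52>1$, which holds exactly because $\alpha>-\tfrac32$. Since the heat kernel integrates to one in all three settings of \cref{c:conv}, one has $\int_D p_{t-r}(x,y)\,dy=1$, so the integrability requirement in \eqref{con.k} holds with $\Gamma=1$ and $\nu=0$; and $f\in\C_b^\infty$ is bounded, so the standing hypothesis on $f$ is met. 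Then \eqref{Bound1K} gives
\[
\Bigl\|\int_s^{t}\int_D p_{t-r}(x,y) f(u_r(y))\,dy\,dr\Bigr\|_{L_2(\Omega)}\le C\|f\|_{\C^\alpha}(t-s)^{1+\frac\alpha4}\bigl(1+[u-V]_{\C^{\frac58}L_2([s,t])}(t-s)^{\frac38}\bigr).
\]

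The remaining estimates are routine exponent bookkeeping. Since $t-s\le1$ and $1+\tfrac\alpha4\ge\tfrac12$ (because $\alpha>-2$), we have $(t-s)^{1+\frac\alpha4}\le(t-s)^{\frac12}$; the factor $(t-s)^{\frac38}\le1$ may be dropped; and $[u-V]_{\C^{\frac58}L_2([s,t])}\le[u-V]_{\C^{\frac58}L_2([0,1])}$, which is finite thanks to $u\in\V(\tfrac58)$. This produces the clean moment bound with the factor $(t-s)^{\frac12}\bigl(1+[u-V]_{\C^{\frac58}L_2([0,1])}\bigr)$. Finally, applying Markov's inequality in the form $\P(|X|>\delta)\le\delta^{-1}\E|X|\le\delta^{-1}\|X\|_{L_2(\Omega)}$ yields exactly the claimed bound. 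There is no genuinely hard step here; the only points requiring care are the choice $T=t$ (so that the weight $p_{t-r}$ in the integral matches the convolution kernel demanded by \cref{L:mainml}) and checking that the rate $(t-s)^{1/2}$ survives the reduction of the exponent $1+\tfrac\alpha4$, both of which follow directly from $\alpha>-\tfrac32$ and $t-s\le1$.
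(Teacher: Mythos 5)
Your proof is correct and follows exactly the paper's argument: both apply \cref{L:mainml} with $\psi=u-V$, $T=t$, $X_r=p_{t-r}(x,\cdot)$, $\gamma=\alpha$, $\tau=\frac58$, $\nu=0$ to obtain the $L_2(\Omega)$ bound and then conclude by Chebyshev's (Markov's) inequality. The only difference is that you spell out the exponent reduction $(t-s)^{1+\frac\alpha4}\le(t-s)^{\frac12}$ explicitly, which the paper leaves implicit.
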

\begin{proof}
We apply \cref{L:mainml} with $\psi:=u-V$, $T:=t$, $\gamma=\alpha$, $\tau=\frac58$, $X_r:=p_{t-r}(x,\cdot)$, where $r\in[0,t)$, $\nu=0$. It is easy to see that  $\alpha+4\tau>1$ because $\alpha>-3/2$ and 
\begin{equation*}
\int_D p_{t-r}(x,y)\,dy=1.
\end{equation*}
Thus,  all the assumptions of \cref{L:mainml} are satisfied and we get 
\begin{equation*}
\Bigl\|\int_s^{t}\int_D p_{t-r}(x,y) f(u_r(y))\,dy\,dr\Bigr\|_{L_2(\Omega)}\le C \|f\|_{\C^\alpha} (t-s)^{\frac12}(1+[u-V]_{\C^{\frac58}L_2([0,1])}),\quad x\in D,
\end{equation*}
for $C=C(\alpha)$ independent of $s,t,x,f$. The statement of the lemma follows now from the Chebyshev inequality.
\end{proof}

Now we are ready to prove \cref{prop:10_11_1}.
\begin{proof}[Proof of \cref{prop:10_11_1}]
Fix $\delta>0$. Let $n\in\N$, $t\in(0,1]$, $x\in D$, $\alpha'\in(-\frac32,\alpha)$. We apply 
\cref{lem:3_11_1} and \cref{l:2term}
 and use decomposition \eqref{eq:decompos} with $b^n$ in place of $f$. We get 
for any $\eps\in(0,t]$
\begin{align}\label{eq:newdec}
&\P\Bigl(\Bigl|\int_0^t\int_D p_{t-r}(x,y) b^n(u_r(y))\,dy\,dr-R_\K(t,x)\Bigr|>\delta\Bigr)\nn\\
&\qquad\le
\P(I_1^{\eps}(t,x)>\frac\delta3)+\P(I_2^{\eps}(t,x)>\frac\delta3)+\P(I_3^{\eps}(t,x)>\frac\delta3)\nn\\
&\qquad\le C \delta^{-1}(1+[u-V]_{\C^{\frac58}L_2([0,1])})\bigl(\|b^n-b\|_{\C^{\alpha'}}+\|b^n\|_{\C^\alpha}\eps^{\frac12}\bigr)+\P(I_3^{\eps}(t,x)>\frac\delta3)
\end{align}
for $C=C(\alpha,\alpha')$. By \cref{l:txrepr}, $I_3^{\eps}(t,x)\to 0$ in probability as $\eps\to0$. 
Therefore, we can pass to the limit in the right-hand side of \eqref{eq:newdec} as $\eps\to0$ and derive for any $t\in(0,1]$, $x\in D$
\begin{equation*}
\P\Bigl(\Bigl|\int_0^t\int_D p_{t-r}(x,y) b^n(u_r(y))\,dy\,dr-R_\K(t,x)\Bigr|>\delta\Bigr)\le C \delta^{-1}(1+[u-V]_{\C^{\frac58}L_2([0,1])})\|b^n-b\|_{\C^{\alpha'}}
\end{equation*}
for $C=C(\alpha,\alpha')$ independent of $n$.
Since $b^n\to b$ in $\C^{\alpha-}$, we have $\|b^n-b\|_{\C^{\alpha'}}\to0$ as $n\to\infty$. Since $u\in\V(\frac58)$, we have $[u-V]_{\C^{\frac58}L_2([0,1])}<\infty$. Thus, \eqref{eq:3_11_1a} holds.
\end{proof}

\begin{proof}[Proof of \cref{thm:main0}: (a) implies (b)]
Let $u\in\V(\frac58)$ be a  weak regularized solution  to equation~\eqref{SPDE}. We apply \cref{l:43} with $\gamma=\alpha$ and $\tau=5/8$ and $f=b$. We see that condition \eqref{gammatau} holds since $\alpha>-\frac32$. Hence there exists a  jointly continuous adapted process $H^b\colon[0,1]\times D\times \Omega\to\R$ such that for any sequence of functions $(b_n)_{n\in\Z_+}$ converging to $b$ in $\C^{\alpha-}$ we have
\begin{equation}\label{rez210}
	\sup_{t\in[0,1]}\sup_\DN
	\lt|\int_0^t\int_\0 p_{t-r}(x,y) b^n(u_r(y))\,dy\,dr-H^b_t(x)\rt|\to0\quad  \text{in probability as $n\to\infty$.}
\end{equation}
On the other hand, by \cref{l:txrepr} $u$ satisfies $\P$-a.s.
\begin{equation}\label{uequ}
	u_t(x)= P_tu_{0}(x)   +R_\K(t,x)+V_t(x),\quad  t\in[0,1],\,\, x\in D. 
\end{equation}
Further, by \cref{prop:10_11_1} we have for any fixed $t\in[0,1]$, $x\in D$
\begin{equation*}
	\int_0^t\int_D p_{t-r}(x,y) b^n(u_r(y))\,dydr\to R_\K(t,x),\quad  \text{in probability as $n\to\infty$.}
\end{equation*}
Combining this with \eqref{rez210}, we see that there exists a set $\Omega'\subset \Omega$ of full probability measure such that $H_t^b(x)=R_{\K}(t,x)$ for $t\in[0,1]\cap \Q$, $x\in D\cap \Q$. Since both $H^b$ and $R_\K$ are continuous we see that on  $\Omega'$ we have  $H_t^b(x)=R_{\K}(t,x)$ for all  $t\in[0,1]$, $x\in D$. Thus, 
\begin{equation*}
	\sup_{t\in[0,1]}\sup_{x\in D} 
	\lt|\int_0^t\int_\0 p_{t-r}(x,y) b^n(u_r(y))\,dy\,dr-R_\K(t,x)\rt|\to0\quad  \text{in probability as $n\to\infty$.}
\end{equation*}
Recalling now \eqref{uequ}, we see that all the conditions of \cref{Def:mildrsol} are satisfied and we can conclude that $u$ is  a mild regularized solution to equation~\eqref{SPDE}.
\end{proof}

\subsection{Proof of \cref{thm:main0}: (b) implies (a)}

The first step is to show that if $u$ is a mild regularized solution  of equation  \eqref{SPDE}, then it satisfies \eqref{idweakreg} for some drift $R_\phi$ in place of $\K_t(\phi)$. 

\begin{lemma}\label{l:weakd}
Let $\alpha>-3/2$, $b\in\C^\alpha$, 
$u_0\in\Bsp(\0)$. Let $u$ be a mild regularized solution  of equation  \eqref{SPDE} with the initial condition $u_0$ and suppose that $u\in\V(5/8)$. 

Then for any $\phi\in \S(D)$, $t\in[0,1]$, there 
exists a continuous process $R_\phi\colon[0,1]\times\Omega\to\R$ such that
\begin{equation}\label{rdef}
\sum_{i=1}^{\lfloor nt\rfloor} \langle K_{\frac{i}n}-P_{\frac1n}K_{\frac{i-1}n}, \phi\rangle\to R_\phi(t)
\end{equation}
in probability as $n\to\infty$. 
Further, $u$ satisfies a.s.
\begin{equation}\label{newdec}
\langle u_t, \phi \rangle= \langle u_{0},\phi\rangle + \int_{0}^t \langle u_s, \frac{1}{2}\Delta \phi\rangle\,ds  +R_\phi(t)+\int_{0}^t\int_D \phi(x)W(ds,dx),\quad t\in[0,1].
\end{equation}
\end{lemma}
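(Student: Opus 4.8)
The plan is to prove both assertions simultaneously by telescoping $\langle u_{\kappa_n(t)},\phi\rangle$ along the grid $\{0,\tfrac1n,\dots,\tfrac{\lfloor nt\rfloor}{n}\}$ and identifying the three resulting limits. Since $u$ is a mild regularized solution I may use its decomposition $u_t=P_tu_0+K_t+V_t$ from \eqref{idmildid}. First I record that \cref{c:globalspaceb} (via \eqref{supbound}) ensures $u_t$, and hence by the decomposition each of $P_tu_0$, $K_t$, $V_t$, grows at most like $|x|^{\eps}$ as $|x|\to\infty$; consequently the pairings $\langle u_t,\phi\rangle$, $\langle K_t,\phi\rangle$, $\langle V_t,\phi\rangle$ are well defined for every $\phi\in\S(D)$, and $t\mapsto\langle u_t,\phi\rangle$ is a.s. continuous on $[0,1]$.

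The key algebraic input is an exact increment formula. Using $P_{1/n}P_{(i-1)/n}u_0=P_{i/n}u_0$ together with the semigroup (Markov) property of the stochastic convolution, $V_{i/n}-P_{1/n}V_{(i-1)/n}=\int_{(i-1)/n}^{i/n}\int_D p_{i/n-r}(\cdot,y)\,W(dr,dy)$ (a consequence of Chapman--Kolmogorov and stochastic Fubini), one obtains
\begin{equation*}
u_{\frac in}-P_{1/n}u_{\frac{i-1}n}=\bigl(K_{\frac in}-P_{1/n}K_{\frac{i-1}n}\bigr)+\int_{\frac{i-1}n}^{\frac in}\int_D p_{\frac in-r}(\cdot,y)\,W(dr,dy).
\end{equation*}
Pairing with $\phi$, using the self-adjointness of $P$ and stochastic Fubini, and summing the telescoping relation $\langle u_{\kappa_n(t)},\phi\rangle-\langle u_0,\phi\rangle=\sum_{i=1}^{\lfloor nt\rfloor}\langle u_{i/n}-u_{(i-1)/n},\phi\rangle$ after the split $\langle u_{i/n}-u_{(i-1)/n},\phi\rangle=\langle u_{i/n}-P_{1/n}u_{(i-1)/n},\phi\rangle+\langle u_{(i-1)/n},(P_{1/n}-I)\phi\rangle$, I arrive at
\begin{align*}
\langle u_{\kappa_n(t)},\phi\rangle-\langle u_0,\phi\rangle &= \sum_{i=1}^{\lfloor nt\rfloor}\langle K_{\frac in}-P_{1/n}K_{\frac{i-1}n},\phi\rangle + \sum_{i=1}^{\lfloor nt\rfloor}\langle u_{\frac{i-1}n},(P_{1/n}-I)\phi\rangle\\
&\quad + \int_0^{\kappa_n(t)}\int_D P_{\kappa_n(r)+\frac1n-r}\phi(y)\,W(dr,dy).
\end{align*}

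I then let $n\to\infty$ term by term. The left-hand side converges a.s. to $\langle u_t,\phi\rangle-\langle u_0,\phi\rangle$ by continuity of $u$ and the growth bound. The stochastic term converges in $L_2$ (hence in probability) to $W_t(\phi)$ by the same It\^o-isometry argument as at the end of the proof of \cref{l:phitd}(iii), using $\|P_h\phi-\phi\|_{L_2}\to0$ and $t-\kappa_n(t)\le1/n$. For the Laplacian term I use the semigroup calculus identity $(P_{1/n}-I)\phi=\int_0^{1/n}\tfrac12\Delta P_s\phi\,ds$ to rewrite $\sum_{i}\langle u_{(i-1)/n},(P_{1/n}-I)\phi\rangle=\int_0^{\kappa_n(t)}\langle u_{\kappa_n(r)},\tfrac12\Delta P_{r-\kappa_n(r)}\phi\rangle\,dr$; since $\{\Delta P_h\phi:h\in[0,1]\}$ is bounded in $\S(D)$ and $u$ has at most $|x|^{\eps}$ growth, the integrand is uniformly bounded and converges pointwise to $\langle u_r,\tfrac12\Delta\phi\rangle$, so dominated convergence yields $\int_0^t\langle u_s,\tfrac12\Delta\phi\rangle\,ds$. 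Hence the remaining drift sum converges in probability, which establishes \eqref{rdef} and forces
\begin{equation*}
R_\phi(t)=\langle u_t,\phi\rangle-\langle u_0,\phi\rangle-\int_0^t\langle u_s,\tfrac12\Delta\phi\rangle\,ds-W_t(\phi).
\end{equation*}
This representation makes $R_\phi$ manifestly adapted and a.s. continuous in $t$, and rearranging it is exactly \eqref{newdec}.

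The main obstacle is the Laplacian/Riemann-sum step in the unbounded case $D=\R$: one must dominate $\langle u_{\kappa_n(r)},(P_{1/n}-I)\phi\rangle$ uniformly in $n$ while integrating against the $|x|^{\eps}$ growth of $u$. Rewriting the increment through $\int_0^{1/n}\tfrac12\Delta P_s\phi\,ds$ and invoking the boundedness of the heat semigroup on $\S(\R)$ over $[0,1]$ supplies the uniform rapid decay needed for dominated convergence, which is precisely where \eqref{supbound} enters. In the two bounded-domain settings this step is easier, since $\phi$ and $\Delta P_h\phi$ are bounded and $u$ has finite moments, while self-adjointness and the semigroup calculus continue to hold for the periodic and Neumann Laplacians on $\C^\infty_{per}([0,1])$ and $\C^\infty_{Neu}([0,1])$; the argument is thus uniform across the three cases of \cref{c:conv}.
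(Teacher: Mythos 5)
Your proposal is correct and follows essentially the same route as the paper: the same telescoping of $\langle u_{\kappa_n(t)},\phi\rangle$ using the decomposition $u_t=P_tu_0+K_t+V_t$, the same identification of the Laplacian and noise limits (semigroup calculus for $(P_{1/n}-I)\phi$, It\^o isometry for the martingale part), and the same definition of $R_\phi$ as the remainder in \eqref{newdec}. The only cosmetic differences are that the paper defines $R_\phi$ first via \eqref{rphidef} and then proves \eqref{rdef}, and it controls the tails $|x|\ge N$ through $L_1(\Omega)$ moment bounds (yielding convergence in probability) rather than your pathwise dominated-convergence argument via \eqref{supbound}; both are valid.
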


\begin{proof} 
Fix $\phi\in\S(D)$.
First, we note that by \cref{c:globalspaceb} we have
\begin{equation*}
\sup_{t\in[0,1]}\sup_{x\in D}\frac{|u_t(x)|}{1+|x|}<\infty\quad \text{a.s.}	
\end{equation*}
and therefore all the integrals in \eqref{newdec} are well-defined as Lebesgue integrals and the process $t\mapsto\la u_t,\phi\ra$ is a.s. continuous. Hence the process
\begin{equation}\label{rphidef}
R_\phi(t):=\langle u_t, \phi \rangle-\langle u_{0},\phi\rangle - \int_{0}^t \langle u_s, \frac{1}{2}\Delta \phi\rangle\,ds  -\int_{0}^t\int_D \phi(x)W(ds,dx),\quad t\in[0,1]
\end{equation}
is also continuous and \eqref{newdec} holds. 

Now let us show \eqref{rdef}. Fix $t\in[0,1]$. 
Using identity in part (i) of \cref{Def:mildrsol}, we clearly have for any $n\in \N$, $k=1,..., n$
\begin{align*}
\langle u_{\frac{k}n}, \phi\rangle-
\langle u_{\frac{k-1}n}, P_{\frac1n}\phi\rangle&=\langle P_{\frac{k}n} u_0, \phi\rangle+ \langle K_{\frac{k}n}, \phi\rangle-\langle V_{\frac{k}n}, \phi\rangle-\langle P_{\frac{k-1}n} u_0, P_{\frac1n}\phi\rangle- \langle K_{\frac{k-1}n}, P_{\frac1n}\phi\rangle\\
&\phantom{=}+\langle V_{\frac{k-1}n}, P_{\frac1n}\phi\rangle\\
&= \langle K_{\frac{k}n}-P_{\frac1n}K_{\frac{k-1}n}, \phi\rangle+\langle V_{\frac{k}n}-P_{\frac1n}V_{\frac{k-1}n}, \phi\rangle.
\end{align*}
Summing these identities over $k$, we derive for any $t\in[0,1]$
\begin{align}\label{identw}
\langle u_{\kappa_n(t)}, \phi\rangle=& \langle u_{0},\phi\rangle + \sum_{i=1}^{\lfloor nt\rfloor} 
		\langle u_{\frac{i-1}{n}},P_{\frac1n} \phi-\phi\rangle
	+	\sum_{i=1}^{\lfloor nt\rfloor} \langle K_{\frac{i}n}-P_{\frac1n}K_{\frac{i-1}n}, \phi\rangle\nn\\
	&+	\sum_{i=1}^{\lfloor nt\rfloor} \langle V_{\frac{i}n}-P_{\frac1n}V_{\frac{i-1}n}, \phi\rangle.
\end{align}
Let us pass to the limit as $n\to\infty$ in this identity. Fix $N>0$. 
We have 
\begin{equation}\label{onelimw}
\bigl|\langle u_{\kappa_n(t)}-u_t, \phi\rangle\bigr|\le 
\langle |u_{\kappa_n(t)}|+|u_t|, |\phi|\I_{|\cdot|\ge N}\rangle+\bigl|\langle u_{\kappa_n(t)}, \phi\I_{|\cdot|\le N}\rangle- \langle u_t, \phi\I_{|\cdot|\le N}\rangle\bigr|.
\end{equation}
 Using that  $u$ is a.s. bounded and continuous  on $D\cap[-N,N]$, we easily get
\begin{equation}\label{onelimwN}
\bigl|\langle u_{\kappa_n(t)}, \phi\I_{|\cdot|\le N}\rangle- \langle u_t, \phi\I_{|\cdot|\le N}\rangle\bigr|\to0,
\end{equation}
a.s. as $n\to\infty$. Since 
\begin{equation*}
\E \langle |u_{\kappa_n(t)}|+|u_t|, |\phi|\I_{|\cdot|\ge N}\rangle\le C\|u\|_{\Ctimespace{0}{1}{[0,1]}}\int_{|x|\ge N} |\phi(x)|\,dx\to0,
\end{equation*}
as $N\to\infty$, we derive from \eqref{onelimw} and \eqref{onelimwN}
\begin{equation}\label{onelimwf}
\langle u_{\kappa_n(t)}, \phi\rangle\to \langle u_{t}, \phi\rangle,\quad  \text{in probability as $n\to\infty$.}
\end{equation}

Next, we note 
\begin{equation}\label{indentmw}
 \sum_{i=1}^{\lfloor nt\rfloor} 
		\langle u_{\frac{i-1}{n}},P_{\frac1n}\phi-\phi\rangle=\int_0^{\kappa_n(t)} \langle u_{\kappa_n(s)}, \frac12\Delta P_{s-\kappa_n(s)}\phi\rangle\,ds=
		\int_0^{\kappa_n(t)} \langle u_{\kappa_n(s)},  \frac12P_{s-\kappa_n(s)}\Delta\phi\rangle\,ds.
\end{equation}
We split the integration in the scalar product as before into two parts: over the compact set $D \cap [-N, N]$ and over the remainder $D \cap (\R \setminus [-N, N])$. Then  for any fixed $N>0$ we have a.s.
\begin{equation}\label{twolimw1}
\lim_{n\to\infty }\Bigl|
\int_0^{\kappa_n(t)} \langle u_{\kappa_n(s)}\I_{|\cdot|\le N},  \frac12P_{s-\kappa_n(s)}\Delta\phi\rangle\,ds-\int_0^t \langle u_s\I_{|\cdot|\le N}, \frac12\Delta \phi\rangle\,ds
\Bigr|=0
\end{equation}
On the other hand, we get 
\begin{align*}
&\E 	\Bigl|\int_0^{\kappa_n(t)} \langle u_{\kappa_n(s)}\I_{\|\cdot\|\ge N},  \frac12P_{s-\kappa_n(s)}\Delta\phi\rangle\,ds\Bigr|\le \|u\|_{\Ctimespace{0}{1}{[0,1]}} \int_{|x|\ge N} \sup_{r\in[0,1]} P_r|\Delta \phi|(x)\,dx\to0;\\
&\E 	\Bigl|\int_0^{\kappa_n(t)} \langle u_{s}\I_{\|\cdot\|\ge N},  \frac12\Delta\phi\rangle\,ds\Bigr|\le \|u\|_{\Ctimespace{0}{1}{[0,1]}} \int_{|x|\ge N} |\Delta \phi(x)|\,dx\to0
\end{align*}
as $N\to\infty$. Combining this with \eqref{indentmw} and \eqref{twolimw1}, we get
\begin{equation}\label{twolimwf}
 \sum_{i=1}^{\lfloor nt\rfloor} 
\langle u_{\frac{i-1}{n}},P_{\frac1n}\phi-\phi\rangle\to 	\int_0^t \langle u_s, \frac12\Delta \phi\rangle\,ds,\quad  \text{in probability as $n\to\infty$.}
\end{equation}
Finally, we derive
\begin{align*}
\sum_{i=1}^{\lfloor nt\rfloor} \langle V_{\frac{i}n}-P_{\frac1n}V_{\frac{i-1}n}, \phi\rangle&=
\sum_{i=1}^{\lfloor nt\rfloor} 
\int_D\int_{t_{i-1}}^{t_i} \int_D p_{t_i-r}(x,y) W(dr,dy)\phi(x)dx\\
&=\int_{0}^{\kappa_n(t)}\int_D P_{\kappa_n(r)+\frac1n-r} \phi(y)W(dr,dy).
\end{align*}
It is not difficult to show that for any $r\in[0,t]$
\begin{equation*}
\lim_{n\to\infty}\int_D  (P_{\kappa_n(r)+\frac1n-r} \phi(y)-\phi(y))^2\,dy=0.
\end{equation*}
Thus, using Ito's isometry, we get  
\begin{equation*}
\sum_{i=1}^{\lfloor nt\rfloor} \langle V_{\frac{i}n}-P_{\frac1n}V_{\frac{i-1}n}, \phi\rangle\to \int_{0}^t\int_D \phi(y)W(dr,dy),\quad  \text{in $L_2(\Omega)$ as $n\to\infty$.}
\end{equation*}
Recalling now  \eqref{onelimwf}, \eqref{twolimwf} and substituting them into \eqref{identw}, we 
get 
\begin{align*}
\sum_{i=1}^{\lfloor nt\rfloor} \langle K_{\frac{i}n}-P_{\frac1n}K_{\frac{i-1}n}, \phi\rangle&=	\langle u_{\kappa_n(t)}, \phi\rangle- \langle u_{0},\phi\rangle - \sum_{i=1}^{\lfloor nt\rfloor} 
	\langle u_{\frac{i-1}{n}},P_{\frac1n} \phi-\phi\rangle
-	\sum_{i=1}^{\lfloor nt\rfloor} \langle V_{\frac{i}n}-P_{\frac1n}V_{\frac{i-1}n}, \phi\rangle\\
	&\to \langle u_{t}, \phi\rangle- \langle u_{0},\phi\rangle-\int_0^t \langle u_s, \frac12\Delta \phi\rangle\,ds-\int_{0}^t\int_D \phi(y)W(dr,dy)
\end{align*}
in probability as $n\to\infty$. Comparing this with the definition of $R_\phi(t)$ in \eqref{rphidef}, we get \eqref{rdef}.
\end{proof}

To complete the proof that a mild regularized solution is a weak regularized solution, we need to show that the process $R_\phi$ satisfies condition \eqref{Kcurlydef}, that is, it is a limit of the corresponding approximating sequence. We already know from \cref{l:43}(ii) that the approximating sequence converges, it remains just to identify the limit.

\begin{lemma}\label{l:weakid}
Let $\alpha>-3/2$, $b\in\C^\alpha$, 
$u_0\in\Bsp(\0)$. Let $u$ be a mild regularized solution  of equation  \eqref{SPDE} with the initial condition $u_0$ and suppose that $u\in\V(5/8)$.

Then for any  sequence of functions $(b^m)_{m\in\Z_+}$ in
      $\C_b^\infty$ such that $b^m\to b$ in $\C^{\alpha-}$, $\phi\in\S(D)$, $t\in[0,1]$ we have
\begin{equation}
\label{wlim}
\int_0^t\int_D \phi(x) b^m(u_r(x))\,dxdr\to R_\phi(t)\quad  \text{in probability as $n\to\infty$.}
\end{equation}
\end{lemma}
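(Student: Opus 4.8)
The plan is to show that the process $\H^b(\phi)$ supplied by \cref{l:43}(ii) coincides with the process $R_\phi$ constructed in \cref{l:weakd}; this immediately yields \eqref{wlim}. Indeed, applying \cref{l:43}(ii) with $f=b$, $\gamma=\alpha$ and $\tau=\tfrac58$ (admissible since $\alpha+4\cdot\tfrac58>1$ because $\alpha>-\tfrac32$, and $u\in\V(\tfrac58)$), for every admissible sequence $(b^m)$ the left-hand side of \eqref{wlim} converges in probability, as $m\to\infty$, to a process $\H^b_t(\phi)$ that does not depend on the sequence. Hence it suffices to prove $\H^b_t(\phi)=R_\phi(t)$ a.s.\ for each $t\in[0,1]$.

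The bridge between the ``mild'' quantity $R_\phi$ and the ``weak'' quantity $\H^b_t(\phi)$ is a doubly indexed array. For $b^m\in\C_b^\infty$ write $K^m_s(x):=\int_0^s\int_D p_{s-r}(x,y)b^m(u_r(y))\,dy\,dr$; by \cref{l:43}(i) (equivalently by \cref{Def:mildrsol}, as used in the proof of \cref{c:globalspaceb}) one has $K^m\to K$ uniformly on compacts in probability. Since $b^m$ is bounded, Fubini's theorem together with the Chapman--Kolmogorov identity $\int_D p_{1/n}(x,z)p_{(i-1)/n-r}(z,y)\,dz=p_{i/n-r}(x,y)$ gives the exact identity $K^m_{i/n}(x)-P_{1/n}K^m_{(i-1)/n}(x)=\int_{(i-1)/n}^{i/n}\int_D p_{i/n-r}(x,y)b^m(u_r(y))\,dy\,dr$. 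Pairing with $\phi$, using the self-adjointness of $P$, and summing over $i$ then yields
\[
A_{n,m}:=\sum_{i=1}^{\lfloor nt\rfloor}\langle K^m_{i/n}-P_{1/n}K^m_{(i-1)/n},\phi\rangle=\int_0^{\kappa_n(t)}\!\int_D P_{\kappa_n(r)+\frac1n-r}\phi(y)\,b^m(u_r(y))\,dy\,dr.
\]

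The key step is a uniform-in-$m$ estimate comparing $A_{n,m}$ with $A_{\infty,m}:=\int_0^t\int_D\phi(y)b^m(u_r(y))\,dy\,dr$. Their difference splits into a bulk integral over $[0,\kappa_n(t)]$ with time-dependent test function $P_{\kappa_n(r)+\frac1n-r}\phi-\phi$, and a boundary integral over $[\kappa_n(t),t]$. I would bound both by \cref{L:mainml} applied with $\psi=u-V$, $\tau=\tfrac58$, $\gamma=\alpha$, $\nu=0$, using $\|P_s\phi-\phi\|_{L_1(D)}\le C(\phi)\,s$ and $s=\kappa_n(r)+\tfrac1n-r\le\tfrac1n$ for the bulk term (so \eqref{con.k} holds with $\Gamma=C(\phi)n^{-1}$), and $t-\kappa_n(t)\le\tfrac1n$ for the boundary term (where the exponent $1+\tfrac\alpha4>\tfrac58$ supplies the decay). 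Since $\sup_m\|b^m\|_{\C^\alpha}<\infty$ and $[u-V]_{\C^{\frac58}L_2([0,1])}<\infty$, estimate \eqref{Bound1K} produces $\|A_{n,m}-A_{\infty,m}\|_{L_2(\Omega)}\le C(\phi)\,n^{-5/8}$ uniformly in $m$.

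It remains to interchange the limits $m\to\infty$ and $n\to\infty$. For each fixed $n$ the finite sum $A_{n,m}$ converges in probability, as $m\to\infty$, to $A_{n,\infty}:=\sum_{i=1}^{\lfloor nt\rfloor}\langle K_{i/n}-P_{1/n}K_{(i-1)/n},\phi\rangle$: this uses $K^m\to K$ uniformly on compacts together with the uniform-in-$(m,x)$ bound on $\|K^m_s(x)\|_{L_2(\Omega)}$ coming from \cref{L:mainml}, which allows one to discard the tail $|x|>N$ (as $\phi$ decays) exactly as in the proof of \cref{l:weakd}. Meanwhile $A_{\infty,m}\to\H^b_t(\phi)$ in probability by \cref{l:43}(ii). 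Passing $m\to\infty$ in the uniform estimate (Fatou's lemma for convergence in probability) gives $\|\H^b_t(\phi)-A_{n,\infty}\|_{L_2(\Omega)}\le C(\phi)n^{-5/8}$; since $A_{n,\infty}\to R_\phi(t)$ in probability by \eqref{rdef}, letting $n\to\infty$ forces $\H^b_t(\phi)=R_\phi(t)$ a.s., and \eqref{wlim} follows. The main obstacle is precisely this double-limit interchange, which hinges on the uniform-in-$m$ bound of the previous paragraph; the subsidiary technical point is controlling the spatial pairing over the non-compact domain $D=\R$, handled by the uniform moment bounds of \cref{L:mainml} and the rapid decay of $\phi$.
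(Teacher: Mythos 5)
Your argument is correct and rests on the same skeleton as the paper's proof: the exact identity $\sum_i\langle K^m_{i/n}-P_{1/n}K^m_{(i-1)/n},\phi\rangle=\int_0^{\kappa_n(t)}\int_D P_{\kappa_n(r)+\frac1n-r}\phi(y)\,b^m(u_r(y))\,dy\,dr$, the use of \cref{L:mainml} with $X_r=P_{\kappa_n(r)+\frac1n-r}\phi-\phi$ (giving $\Gamma\sim n^{-1}$) for the smoothing error plus a boundary term over $[\kappa_n(t),t]$, and the convergence \eqref{rdef} from \cref{l:weakd} to close the loop. The two places where you organize things differently are worth noting. First, you route the conclusion through \cref{l:43}(ii), obtaining a sequence-independent limit $\H^b_t(\phi)$ and then identifying $\H^b_t(\phi)=R_\phi(t)$; the paper instead bounds $\P\bigl(|\int_0^t\langle\phi,b^m(u_r)\rangle\,dr-R_\phi(t)|>\delta\bigr)$ directly via the four-term decomposition \eqref{decweak}. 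Second, for the ``$m\to\infty$ at fixed $n$'' step you use the qualitative uniform-on-compacts convergence $K^m\to K$ from \eqref{idmildreg} together with the uniform $L_2$ bound of \cref{L:mainml} to discard the spatial tail, whereas the paper extracts a quantitative estimate $\|I_2^{n,m}\|_{L_2}\le Cn\|b-b^m\|_{\C^{\alpha'}}$ from \eqref{limpr} via Fatou and Minkowski. Your version is arguably cleaner (no constant blowing up in $n$ to be killed by $m\to\infty$), at the cost of the extra tail-control argument over the non-compact domain, which you correctly identify and handle; the paper's version keeps everything in $L_2$ norms and avoids discussing uniform convergence of $K^m$ altogether. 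Both are valid; your uniform-in-$m$ rate $n^{-5/8}$ (using $1+\frac\alpha4>\frac58$) is also consistent with the paper's exponents.
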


\begin{proof}
Fix $t\in[0,1]$, a sequence of functions $(b^m)_{m\in\Z_+}$ in
      $\C_b^\infty$ such that $b^m\to b$ in $\C^{\alpha-}$, $\phi\in\S(D)$, and   $\delta >0$.
  We write for  $n\in\N$
  \begin{align}\label{decweak}
 &\Bigl|\int_0^t\int_D \phi(y) b^m(u_r(y))\,dydr-R_\phi(t)\Bigr|\nn\\
 &\quad\le
 \Bigl|\int_0^{\kappa_n(t)}\int_D \phi(y) b^m(u_r(y))\,dxdr-\int_0^{\kappa_n(t)}\int_D\int_D \phi(x) p_{\kappa_n(r)+\frac1n-r}(x,y)b^m(u_r(y))\,dxdydr\Bigr|\nn\\
 &\quad\phantom{\le}+\Bigl|\int_0^{\kappa_n(t)}\int_D\int_D \phi(x) p_{\kappa_n(r)+\frac1n-r}(x,y)b^m(u_r(y))\,dxdydr-\sum_{i=1}^{\lfloor nt\rfloor} \langle K_{\frac{i}n}-P_{\frac1n}K_{\frac{i-1}n}, \phi\rangle\Bigr|\nn\\
 &\quad\phantom{\le}+\Bigl|\sum_{i=1}^{\lfloor nt\rfloor} \langle K_{\frac{i}n}-P_{\frac1n}K_{\frac{i-1}n}, \phi\rangle-R_\phi(t)\Bigr|+ \Bigl|\int_{\kappa_n(t)}^t\int_D \phi(y) b^m(u_r(y))\,dxdr\Bigr|\nn\\
 &=:I^{n,m}_1(t)+I_2^{n,m}(t)+I_3^n(t)+I_4^{n,m}(t).
 \end{align}

To bound $I^{n,m}_1$ we use again \cref{L:mainml} with $f=b^m$, $\gamma=\alpha$, $\tau=\frac58$, $T=\kappa_n(t)$, $X_r(y):= \phi(y)-\int_D \phi(x) p_{\kappa_n(r)+\frac1n-r}(x,y)\,dx$, $\nu=0$. We get for any $r\in[0,t]$
\begin{align*}
\int_D \Bigl|\phi(y)-\int_D \phi(x) p_{\kappa_n(r)+\frac1n-r}(x,y)\,dx\Bigr|\,dy&\le \sup_{s\in[0,\frac1n]}
\int_D |\phi(y)-P_s\phi(y)|\,dy\\
&\le Cn^{-1}\|\Delta \phi\|_{L_1(D)},
\end{align*}
for universal $C>0$,
where we also used that $p_r(x,y)=p_r(y,x)$ for any $r>0$, $x,y\in D$. Therefore, condition \eqref{con.k} holds with $\Gamma=Cn^{-1}\|\Delta \phi\|_{L_1(D)}$. Thus, all the assumptions of \cref{L:mainml} are satisfied and we deduce from \eqref{Bound1K}
\begin{equation}\label{ieins}
\|I^{n,m}_1(t)\|_{L_2(\Omega)}\le C \|b^m\|_{\C^\alpha} n^{-1}\|\Delta \phi\|_{L_1(D)}(1+[u-V]_{\C^{\frac58}L_2([0,1])}).
\end{equation}

Next, to bound $I_2^{n,m}(t)$, we rewrite it as 
\begin{align}\label{itwoww}
|I_2^{n,m}(t)|&\le \sum_{i=1}^{\lfloor nt\rfloor} \Bigl\langle \bigl|K_{\frac{i}n}-\int_0^{\frac{i}n}\int_D p_{\frac{i}n-r}(\cdot,y)b^m(u_r(y))\,dydr\bigr|, |\phi|\Bigr\rangle\nn\\
&\phantom{\le}+\sum_{i=1}^{\lfloor nt\rfloor} \Bigl\langle  \int_D P_{\frac1n}(\cdot,z)\Bigl|K_{\frac{i-1}n}(z)-\int_0^{\frac{i-1}n}\int_D p_{\frac{i-1}n-r}(z,y)b^m(u_r(y))\,dydr\Bigr|dz, |\phi|\Bigr\rangle.
\end{align}
By  part (2) of \cref{Def:mildrsol}, we have for each $i=1,...,n$
\begin{equation}\label{limpr}
K_{\frac{i}n}-\int_0^{\frac{i}n}\int_D p_{\frac{i}n-r}(\cdot,y)b^m(u_r(y))\,dydr=\lim_{M\to\infty}
\int_0^{\frac{i}n}\int_D p_{\frac{i}n-r}(\cdot,y)(b^M-b^m)(u_r(y))\,dydr,
\end{equation}
where the limit is in probability. Hence, by the Fatou's lemma and Minkowski's inequality,
\begin{align}\label{part1dif}
&\Bigl\| \Bigl\langle \bigl|K_{\frac{i}n}-\int_0^{\frac{i}n}\int_D p_{\frac{i}n-r}(\cdot,y)b^m(u_r(y))\,dydr\bigr|, |\phi|\Bigr\rangle\Bigr\|_{L_2(\Omega)}\nn\\
&\qquad\le \lim_{M\to\infty}
\Bigl\| \Bigl\langle \bigl|\int_0^{\frac{i}n}\int_D p_{\frac{i}n-r}(\cdot,y)(b^M-b^m)(u_r(y))\,dydr\bigr|, |\phi|\Bigr\rangle\Bigr\|_{L_2(\Omega)}\nn\\
&\qquad\le \lim_{M\to\infty}
 \Bigl\langle \Bigl\|\int_0^{\frac{i}n}\int_D p_{\frac{i}n-r}(\cdot,y)(b^M-b^m)(u_r(y))\,dydr\Bigr\|_{L_2(\Omega)}, |\phi|\Bigr\rangle.
\end{align}
Similarly, we use \eqref{limpr} to  treat the the second group of terms in the right-hand side of \eqref{itwoww}. We get for each $i=1,...,n$
\begin{align}\label{part2dif}
&\Bigl\| \Bigl\langle  \int_D P_{\frac1n}(\cdot,z)\Bigl|K_{\frac{i-1}n}(z)-\int_0^{\frac{i-1}n}\int_D p_{\frac{i-1}n-r}(z,y)b^m(u_r(y))\,dydr\Bigr|dz, |\phi|\Bigr\rangle\Bigr\|_{L_2(\Omega)}\nn\\
&\qquad\le \lim_{M\to\infty}\Bigl\| \Bigl\langle  \int_D P_{\frac1n}(\cdot,z)\Bigl|\int_0^{\frac{i-1}n}\int_D p_{\frac{i-1}n-r}(z,y)(b^M-b^m)(u_r(y))\,dydr\Bigr|dz, |\phi|\Bigr\rangle\Bigr\|_{L_2(\Omega)}\nn\\
&\qquad\le \lim_{M\to\infty}
 \Bigl\langle  \int_D P_{\frac1n}(\cdot,z)\Bigl\|\int_0^{\frac{i-1}n}\int_D p_{\frac{i-1}n-r}(z,y)(b^M-b^m)(u_r(y))\,dydr\Bigr\|_{L_2(\Omega)}dz, |\phi|\Bigr\rangle.
\end{align}
We fix now $\alpha'\in(-\frac32,\alpha)$ and apply \cref{L:mainml} with $f=b^M-b^m$, $\gamma=\alpha'$, $\tau=\frac58$, $T=\frac{i}n$, $X_r(y):= p_{\frac{i}n-r}(x,y)$, $\nu=0$. Since the function $b^M-b^m$ is bounded,  $\int_D p_{\frac{i}n-r}(x,y)\,dy=1$ and $\alpha'+4\tau>1$, we see that all the conditions of the proposition are satisfied and we get for each $i=1,...,n$, $z\in D$
\begin{equation*}
\Bigl\|\int_0^{\frac{i}n}\int_D p_{\frac{i}n-r}(z,y)(b^M-b^m)(u_r(y))\,dydr\Bigr\|_{L_2(\Omega)}\le 
C\|b^M-b^m\|_{\C^{\alpha'}}(1+[u-V]_{\C^{\frac58}L_2([0,1])}),
\end{equation*}
for $C=C(\alpha,\alpha')$. 
We substitute this into \eqref{part1dif}, \eqref{part2dif} and then sum over $i$ to combine with \eqref{itwoww}. Using that $b^M\to b$ in $\C^{\alpha-}$ and thus $b^M\to b$ in $\C^{\alpha'}$, we deduce 
\begin{equation}\label{izwei}
\|I_2^{n,m}(t)\|_{L_2(\Omega)}\le Cn \|b-b^m\|_{\C^{\alpha'}}\|\phi\|_{L_1(D)}(1+[u-V]_{\C^{\frac58}L_2([0,1])}),
\end{equation}
for $C=C(\alpha,\alpha')$.

Next, we use \cref{l:weakd} and \eqref{rdef} to bound $I_3^{n}(t)$. We get for any $\delta>0$
\begin{equation}\label{idrei}
\lim_{n\to\infty} \P(|I_3^n(t)|>\delta)=0.
\end{equation}

Finally, we treat $I_4^{n,m}$ using again  \cref{L:mainml} with $f=b^m$, $\gamma=\alpha$, $\tau=\frac58$, $T=t$, $X_r(y):= \phi$, $\nu=0$. Since $t-\kappa_n(t)\le n^{-1}$, we get 
\begin{equation*}
\|I^{n,m}_4(t)\|_{L_2(\Omega)}\le C \|b^m\|_{\C^\alpha} n^{-\frac12}\|\phi\|_{L_1(D)}(1+[u-V]_{\C^{\frac58}L_2([0,1])})
\end{equation*}
for $C=C(\alpha)$. Now we substitute this together with \eqref{ieins}, \eqref{izwei}, \eqref{idrei} into the decomposition \eqref{decweak}, use the Chebyshev's inequality and get for any $n,m\in\N$, $\delta>0$
\begin{align*}
&
\P\Bigl(\Bigl|\int_0^t\int_D \phi(x) b^m(u_r(x))\,dxdr-R_\phi(t)\Bigr|\ge\delta\Bigl)\\
&\quad\le 
\P(I_1^{n,m}(t)>\frac\delta4)+
\P(I_2^{n,m}(t)>\frac\delta4)+\P(I_3^{n}(t)>\frac\delta4)+\P(I_4^{n,m}(t)>\frac\delta4)\\
&\quad \le C \delta^{-1}(\| \phi\|_{L_1(D)}+\|\Delta \phi\|_{L_1(D)})(1+[u-V]_{\C^{\frac58}L_2([0,1])})\bigl(\|b^m\|_{\C^\alpha} n^{-\frac12}+
n\|b-b^m\|_{\C^{\alpha'}}\bigr)\\
&\quad \phantom{\le}+\P(I_3^{n}(t)>\frac\delta4),
\end{align*}
for $C=C(\alpha,\alpha')$.
By passing to the limit in the above inequality as $m\to\infty$, we get 
\begin{align*}
&
\P\Bigl(\Bigl|\int_0^t\int_D \phi(x) b^m(u_r(x))\,dxdr-R_\phi(t)\Bigr|\ge\delta\Bigl)\\
&\quad\le C n^{-\frac12} \delta^{-1}
(\| \phi\|_{L_1(D)}+\|\Delta \phi\|_{L_1(D)})
(1+[u-V]_{\C^{\frac58}L_2([0,1])})\sup_m\|b^m\|_{\C^\alpha}\\ 
&\qquad+\sup_{t\in[0,1]}\P(I_3^{n}(t)>\frac\delta4),
\end{align*}
for $C=C(\alpha,\alpha')$.
Note that $u\in\V(\frac58)$ and therefore $[u-V]_{\C^{\frac58}L_2([0,1])}<\infty$. Further, $\sup_m\|b^m\|_{\C^\alpha}<\infty$ by the definition of convergence in $\C^{\alpha-}$. By passing to the limit in the above inequality as $n\to\infty$ and using \eqref{idrei}, we get the desired result \eqref{wlim}.
\end{proof}

\begin{proof}[Proof of \cref{thm:main0}: (b) implies (a)]
Let $u\in\V(\frac58)$ be a  mild regularized solution  to equation~\eqref{SPDE}. Let $\phi\in\S(D)$. Then by \cref{l:weakd} $u$ satisfies
a.s.
\begin{equation}\label{210weak}
	\langle u_t, \phi \rangle= \langle u_{0},\phi\rangle + \int_{0}^t \langle u_s, \frac{1}{2}\Delta \phi\rangle\,ds  +R_\phi(t)+\int_{0}^t\int_D \phi(x)W(ds,dx),\quad t\in[0,1].
\end{equation}
Let $(b^n)_{n\in\Z+}$ be a sequence of functions converging to $b$ in $\C^{\alpha-}$.
Applying \cref{l:43}(ii) with $\gamma=\alpha$, $\tau=5/8$, $f=b$ (note  that condition \eqref{gammatau} holds because $\gamma>-\frac32$),  we get 
that there exists a continuous process $\H^b(\phi)\colon[0,1]\times \Omega\to\R$ such that 
\begin{equation}\label{suplim}
\sup_{t\in[0,1]}
\lt|\int_0^t\int_\0 \phi(x) b^n(u_r(x))\,dx\,dr-\H^b_t(\phi)\rt|\to0\quad  \text{in probability as $n\to\infty$.}
\end{equation}
On the other hand, for any fixed $t\in[0,1]$ by \cref{l:weakid}
\begin{equation*}
	\int_0^t\int_D \phi(x) b^m(u_r(x))\,dxdr\to R_\phi(t)\quad  \text{in probability as $n\to\infty$.}
\end{equation*}
Therefore there exists a set of full probability measure $\Omega'\subset \Omega$ such that $R_\phi(t)=\H^b_t(\phi)$ for $t\in[0,1]\cap \Q$. Since both $R_\phi$ and $\H^b(\phi)$ are continuous, we see that 
\begin{equation}\label{identt}
R_\phi(t)=\H^b_t(\phi)\quad\text{ for all $t\in[0,1]$ on $\Omega'$.} 
\end{equation}
Now by \eqref{210weak}, \eqref{suplim} and \eqref{identt} we see that all the conditions of \cref{Def:weakrsol} are satisfied and  $u$ is  a weak regularized solution to equation~\eqref{SPDE}.
\end{proof}

\subsection{Proofs of \cref{thm:main2} and \cref{thm:main1}}

The key step in proving \cref{thm:main2}  is to show that if $b$ is an integrable function, then any analytically weak or mild solution of equation \eqref{SPDE} belongs to the class $\V(\kappa)$ for some $\kappa\ge\frac34$. This is done using \cref{L:metaml} which in turn relies on the stochastic sewing lemma with random controls. Once we establish that a solution belongs to this class, we can show that it is also a regularized solution and thus the uniqueness theorem for regularized solutions is applicable \cite[Theorem~2.6]{ABLM24}.
 As mentioned in \cref{s:MR}, it is relatively straightforward to show that any analytically weak solution to \eqref{SPDE} satisfies \eqref{SPDEfunc} for Lebesgue a.e. $x\in D$. Therefore, the following statement is crucial.

\begin{lemma}\label{l:class}
Let $p\in[1,\infty]$, $b\in L_p(\R)$, $u_0\in\Bsp(\0)$. Let $u\colon(0,1]\times D\times\Omega\to\R$ be a jointly continuous adapted process. 
Suppose that for any $t\in[0,1]$, for Lebesgue a.e. $x\in D$ we have
\begin{equation}\label{cond1}
\int_0^t \int_D p_{t-r}(x,y)|b|(u_r(y))\,dydr<\infty\quad \text{$\P$-a.s.}
\end{equation}
and 
\begin{equation}\label{cond2}
u_t(x)= P_t u_0(x)+\int_0^t \int_D p_{t-r}(x,y)b(u_r(y))\,dydr+V_t(x)\quad \text{$\P$-a.s.}
\end{equation}
Then $u\in \V(1-\frac1{4p})$.
\end{lemma}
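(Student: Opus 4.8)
The plan is to reduce the two requirements in \cref{D:Def15} to a single moment estimate on the increments of the drift term, and then to obtain that estimate by feeding the process back into \cref{L:metaml} and absorbing a self-referential term.

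First I would set $K_t(x):=\int_0^t\int_\0 p_{t-r}(x,y)b(u_r(y))\,dy\,dr$, so that \eqref{cond2} reads $u_t=P_tu_0+K_t+V_t$ for a.e.\ $x$. Since $P_{t-s}P_s u_0=P_tu_0$, the numerator in \eqref{kdefineq} equals $K_t-P_{t-s}K_s$, and Chapman--Kolmogorov gives $P_{t-s}K_s(x)=\int_0^s\int_\0 p_{t-r}(x,y)b(u_r(y))\,dy\,dr$, whence
\begin{equation*}
K_t(x)-P_{t-s}K_s(x)=\int_s^t\int_\0 p_{t-r}(x,y)b(u_r(y))\,dy\,dr .
\end{equation*}
As $|P_tu_0(x)|\le\|u_0\|_\infty$ and $\sup_{t,x}\|V_t(x)\|_{\Lm}<\infty$, and since $K_0=0$, the moment bound $\sup_{t,x}\|u_t(x)\|_{\Lm}<\infty$ is the special case $s=0$ of the increment bound $\|K_t-P_{t-s}K_s\|_{\Lm}\le C|t-s|^{1-\frac1{4p}}$. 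So everything reduces to this one estimate.

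To prove it I would work with the nonnegative process
\begin{equation*}
w_{s,t}(x):=\int_s^t\int_\0 p_{t-r}(x,y)\,|b|(u_r(y))\,dy\,dr,\qquad (s,t)\in\Delta_{[0,1]},
\end{equation*}
which by \eqref{cond1} is a.s.\ finite for a.e.\ $x$ and dominates the increment pointwise, $|K_t(x)-P_{t-s}K_s(x)|\le w_{s,t}(x)$. A direct Chapman--Kolmogorov computation shows $P_{t-u}w_{s,u}(x)+w_{u,t}(x)=w_{s,t}(x)$, so with $\psi:=u-V$ the process $w$ satisfies \eqref{wdef1}--\eqref{wdef2}. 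Assuming first $p<\infty$, the Besov embedding $L_p(\R)\subset\C^{-1/p}$ lets me take $\gamma:=-1/p\in[-1,0)\subset(-2,0)$ and $f:=|b|\ge0$. For each fixed $t$ I apply \cref{L:metaml} with $T=t$ and this $w$: since $P_{T-t}=\mathrm{id}$ and the left-hand side of \eqref{Boundrc} at the endpoint $(s,t)$ is exactly $w_{s,t}(x)$, the estimates \eqref{Boundrc}--\eqref{Boundlst} become the self-referential inequality
\begin{equation*}
w_{s,t}(x)\le C\|b\|_{L_p(\R)}(t-s)^{\frac34-\frac1{4p}}\,w_{s,t}(x)+L_{s,t}(x),\qquad \|L_{s,t}(x)\|_{\Lm}\le C\|b\|_{L_p(\R)}|t-s|^{1-\frac1{4p}} .
\end{equation*}

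The crux is the absorption step. I would choose $\delta_0>0$ with $C\|b\|_{L_p(\R)}\delta_0^{\frac34-\frac1{4p}}\le\frac12$; then for $t-s\le\delta_0$ the a.s.\ finiteness of $w_{s,t}(x)$ justifies subtracting the first term, giving $w_{s,t}(x)\le 2L_{s,t}(x)$ and hence $\|w_{s,t}(x)\|_{\Lm}\le C|t-s|^{1-\frac1{4p}}$. For arbitrary $(s,t)$ I would split $[s,t]$ into $n\le\delta_0^{-1}+1$ equal subintervals of length $\le\delta_0$; additivity of $w$, Minkowski's integral inequality and $\int_\0 p=1$ then yield $\|w_{s,t}(x)\|_{\Lm}\le\sum_i\sup_z\|w_{t_i,t_{i+1}}(z)\|_{\Lm}\le Cn^{\frac1{4p}}(t-s)^{1-\frac1{4p}}\le C(t-s)^{1-\frac1{4p}}$, using $t-s\le1$. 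Together with $|K_t-P_{t-s}K_s|\le w_{s,t}$ this is the desired increment bound for a.e.\ $x$; joint continuity of $u$ and $V$ (hence of $K$) combined with Fatou's lemma extend it to all $x\in\0$, and the moment bound follows as above. The case $p=\infty$ is immediate since $|K_t-P_{t-s}K_s|\le\|b\|_\infty(t-s)$, so in all cases $u\in\V(1-\frac1{4p})$. The main obstacle is precisely this circular structure: the usefulness of \cref{L:metaml} here hinges on choosing $w$ to be simultaneously the control in \eqref{wdef1}--\eqref{wdef2} and the quantity being estimated, and on the a.s.\ finiteness supplied by \eqref{cond1} that makes the absorption legitimate.
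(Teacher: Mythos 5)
Your proof is correct and follows essentially the same route as the paper's (\cref{l:49} together with the proof of \cref{l:class}): the same choice of $w_{s,t}$ fed into \cref{L:metaml} with $\gamma=-1/p$ and $\psi=u-V$, the same absorption of the self-referential term on short intervals followed by additivity over a partition, and the same Fatou/continuity extension from a.e.\ $x$ to all $x\in D$. The one place you are quicker than the paper is $p=\infty$: the pointwise bound $|K_t-P_{t-s}K_s|\le\|b\|_{L_\infty}(t-s)$ is not automatic for a merely essentially bounded representative of $b$ (the occupation measure of $u$ could in principle charge the Lebesgue-null set where $|b|$ exceeds its essential supremum), and the paper closes this by applying \cref{L:metaml} with $\gamma=-1$ to $|b|\I(|b|>\|b\|_{L_\infty})$, whose $L_1$- and hence $\C^{-1}$-norm vanish.
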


To prove this lemma introduce the following set:
\begin{equation}\label{setdt}
	D_t:= \{x\in D: \text{assumptions \eqref{cond1} and \eqref{cond2} hold}\},
\end{equation}
where $t\in[0,1]$.
We show \cref{l:class} in two steps: first we show that inequality \eqref{kdefineq} in the definition of class $\V$ holds for Lebesgue a.e. $x\in D$, and then we show this inequality for all $x\in D$ using continuity of $u$ and Fatou's lemma.

\begin{lemma}\label{l:49}
Assume that the assumptions of \cref{l:class} are satisfied. Then for any $(s,T)\in\Delta_{[0,1]}$, $x\in D_T$ we have
\begin{equation*}
\Bigl\|\int_s^T \int_D p_{T-r}(x,y)|b|(u_r(y))\,dydr\Bigr\|_{L_m(\Omega)}\le C|T-s|^{1-\frac1{4p}}.
\end{equation*}
for $C=C(m,p,\|b\|_{L_p(\R)})>0$ independent of $x$.
\end{lemma}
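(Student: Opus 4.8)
The plan is to deduce the estimate from the integral bound of \cref{L:metaml} applied to the nonnegative function $f=|b|$, with $\psi:=u-V$ and a self-referential choice of the control $w$. First I would record the embedding $L_p(\R)\hookrightarrow\C^{-1/p}$, so that $\||b|\|_{\C^{-1/p}}\le C\|b\|_{L_p(\R)}<\infty$, and set $\gamma:=-1/p$. This $\gamma$ lies in $(-2,0)$ for every $p\in[1,\infty)$ and satisfies $1+\gamma/4=1-\frac1{4p}$ together with $(3+\gamma)/4=(3-1/p)/4>0$, which is exactly the matching of exponents we need. The borderline case $p=\infty$ is trivial, since $|b|(u_r(y))\le\|b\|_{L_\infty}$ gives the bound directly with exponent $1=1-\frac1{4\infty}$.

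The crux is the choice of control. I would define
\[
w_{s,t}(x):=\int_s^t\int_\0 p_{t-r}(x,y)\,|b|(u_r(y))\,dy\,dr,
\]
which is finite a.s. for $x\in D_t$ by \eqref{cond1} (recall $D_t$ from \eqref{setdt}). Writing $u=V+\psi$ and using \eqref{cond2} together with $P_tu_0=P_{t-s}P_su_0$, for $x\in D_t$ one gets $\psi_t(x)-P_{t-s}\psi_s(x)=\int_s^t\int_\0 p_{t-r}(x,y)b(u_r(y))\,dy\,dr$, whence $|\psi_t(x)-P_{t-s}\psi_s(x)|\le w_{s,t}(x)$, i.e.\ \eqref{wdef1}. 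Moreover the Chapman--Kolmogorov identity $\int_\0 p_{t-u}(x,z)p_{u-r}(z,y)\,dz=p_{t-r}(x,y)$ yields $P_{t-u}w_{s,u}(x)+w_{u,t}(x)=w_{s,t}(x)$, so \eqref{wdef2} holds with equality. Throughout I would use $\Leb(D\setminus D_s)=0$ to push these identities under $P_{t-s}$ and $P_{t-u}$; note that no regularity or integrability of $\psi$ is needed, which is precisely why \cref{L:metaml} is applicable here.

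Applying \cref{L:metaml} at the top endpoint $t=T$, and using that $P_0$ is the identity, estimate \eqref{Boundrc} becomes the self-referential inequality
\[
w_{s,T}(x)\le C\|b\|_{\C^{-1/p}}(T-s)^{(3-1/p)/4}\,w_{s,T}(x)+L_{s,T}(x),
\]
with $\|L_{s,T}(x)\|_{\Lm}\le C\|b\|_{\C^{-1/p}}|T-s|^{1-\frac1{4p}}$ by \eqref{Boundlst}, uniformly in $x$. Choosing $\delta_0=\delta_0(p,\|b\|_{L_p})$ with $C\|b\|_{\C^{-1/p}}\delta_0^{(3-1/p)/4}\le\tfrac12$, and using that $w_{s,T}(x)<\infty$ a.s.\ for $x\in D_T$, I can absorb the first term on the right whenever $|T-s|\le\delta_0$ and obtain $w_{s,T}(x)\le2L_{s,T}(x)$, hence $\|w_{s,T}(x)\|_{\Lm}\le C|T-s|^{1-\frac1{4p}}$ for all $x\in D_T$.

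Finally, to remove the restriction $|T-s|\le\delta_0$ I would chain. Partitioning $[s,T]$ into $n=\lceil(T-s)/\delta_0\rceil$ equal subintervals $[t_i,t_{i+1}]$ of length $h\le\delta_0$ and telescoping \eqref{wdef2} (which holds with equality) gives $w_{s,T}(x)=\sum_{i=0}^{n-1}P_{T-t_{i+1}}w_{t_i,t_{i+1}}(x)$ for $x\in D_T$. By Minkowski's inequality, the small-interval bound applied for a.e.\ $z$ (valid since each piece has top index $t_{i+1}$ and $\Leb(D\setminus D_{t_{i+1}})=0$), and $\int_\0 p_{T-t_{i+1}}(x,z)\,dz=1$, I get $\|w_{s,T}(x)\|_{\Lm}\le nC h^{1-\frac1{4p}}=Cn^{\frac1{4p}}(T-s)^{1-\frac1{4p}}$; since $T-s\le1$ forces $n\le 1/\delta_0+1$, this is bounded by $C(m,p,\|b\|_{L_p})|T-s|^{1-\frac1{4p}}$, which is the claim. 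The main obstacle is the self-referential structure: one must recognise that the very quantity to be estimated reappears as the control in \eqref{Boundrc}, and then justify the absorption carefully, using the a.s.\ finiteness from \eqref{cond1} and handling the measure-zero exceptional sets built into the hypotheses of \cref{L:metaml}.
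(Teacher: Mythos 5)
Your argument for $p<\infty$ follows the paper's proof essentially verbatim: the same application of \cref{L:metaml} with $\gamma=-1/p$, $f=|b|$, $\psi=u-V$ and the self-referential control $w_{s,t}(x)=\int_s^t\int_D p_{t-r}(x,y)|b|(u_r(y))\,dydr$ (the paper inserts the indicator $\I_{x\in D_t}$ to make $w$ everywhere finite, but that is cosmetic), the same absorption on short intervals using a.s.\ finiteness from \eqref{cond1}, and the same chaining. Your chaining variant (bounding $w_{t_i,t_{i+1}}(z)$ for a.e.\ $z$ and then pushing through $P_{T-t_{i+1}}$ by Minkowski) is a harmless rearrangement of the paper's step, which instead bounds $P_{T-t_{i+1}}w_{t_i,t_{i+1}}(x)=\int_{t_i}^{t_{i+1}}\int_D p_{T-r}(x,y)|b|(u_r(y))\,dydr$ directly from \eqref{Boundrc} and sums.

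The one genuine gap is your treatment of $p=\infty$, which you declare trivial via ``$|b|(u_r(y))\le\|b\|_{L_\infty}$''. This inequality is false in general: $\|b\|_{L_\infty(\R)}$ is an essential supremum, so $|b(z)|\le\|b\|_{L_\infty(\R)}$ only for Lebesgue-a.e.\ $z$, while the composition $|b|(u_r(y))$ evaluates the fixed measurable representative pointwise and may exceed $\|b\|_{L_\infty(\R)}$ whenever $u_r(y)$ lands in the exceptional null set. Showing that this does not happen (in the relevant integral sense) requires exactly the occupation-measure machinery you are trying to avoid. The paper handles this by writing $|b|=|b|\I_{|b|\le M}+b_M$ with $M=\|b\|_{L_\infty(\R)}$ and $b_M:=|b|\I_{|b|>M}$, noting $\|b_M\|_{\C^{-1}}\le\|b_M\|_{L_1(\R)}=0$, and invoking \cref{L:metaml} with $\gamma=-1$ to conclude that the $b_M$-contribution vanishes in $L_m(\Omega)$; the bounded part then gives $M|T-s|$. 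With that correction your proof is complete.
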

\begin{proof}

Fix $T\in[0,1]$. First, consider the case $p<\infty$. 
We apply \cref{L:metaml} for $\gamma=-\frac1p$, $f=|b|$, $\psi:=u-V$ and put  for $(s,t)\in\Delta_{[0,T]}$, $x\in D$
\begin{equation}\label{wstform}
w_{s,t}(x):=\I_{x\in D_t}\int_s^t\int_\0 p_{t-r}(x,y) |b|(u_r(y))\,dydr.
\end{equation}
Let us verify conditions \eqref{wdef1}--\eqref{wdef2}. Let $t\in[0,T]$. Then it follows from the definition of the set $D_t$ that for $x\in D_t$ we have
\begin{align*}
|\psi_t(x)-P_{t-s}\psi_s(x)|&=\Bigl|\int_s^t\int_\0 p_{t-r}(x,y) b(u_r(y))\,dydr\Bigr|\\
&\le\int_s^t\int_\0 p_{t-r}(x,y) |b|(u_r(y))\,dydr= w_{s,t}(x)
\end{align*}
and thus \eqref{wdef1} holds. Further for any $(s,u)\in \Delta_{[0,t]}$, $x\in D_t$ we have 
\begin{align*}
P_{t-u}w_{s,u}(x)+w_{u,t}(x)&=
\int_s^u\int_\0 p_{t-r}(x,y) |b|(u_r(y))\,dydr+\int_u^t\int_\0 p_{t-r}(x,y) |b|(u_r(y))\,dydr\\
&=w_{s,t}(x).
\end{align*}
Hence condition \eqref{wdef2} holds. We note also that the function $|b|$ is nonnegative. Thus, all the conditions of  \cref{L:metaml} are satisfied and therefore using the embedding $L_p(\R)\subset \C^{-\frac1p}$  we have
for any $(s,t)\in\Delta_{[0,T]}$, $x\in D_T$
\begin{align}\label{idwst}
P_{T-t}w_{s,t}(x)&=\int_s^t\int_\0 p_{T-r}(x,y) |b|(V_r(y)+\psi_r(y))\,dydr\nn\\
&\le 
  C_0\|b\|_{L_p(\R)}(t-s)^{\frac34-\frac1{4p}}P_{T-t}w_{s,t}(x)+L_{s,t}(x),
\end{align}
where $C_0=C_0(p)$, and for any $m\ge 2$
\begin{equation*}
  \|L_{s,t}(x)\|_{\Lm}\le C_1\|b\|_{L_p(\R)}|t-s|^{1-\frac1{4p}},
\end{equation*}
for $C_1=C_1(m,p)$.
Let $\ell>0$ be such that 
\begin{equation*}
  C_0\|b\|_{L_p(\R)}\ell^{\frac34-\frac1{4p}}	 \le \frac12.
\end{equation*}
Then \eqref{idwst} yields for $(s,t)\in\Delta_{[0,T]}$ with $t-s\le \ell$
\begin{equation*}
  \|P_{T-t}w_{s,t}(x)\|_{\Lm}\le 2C_1\|b\|_{L_p(\R)}|t-s|^{1-\frac1{4p}}.
\end{equation*}
By splitting the interval $[s,T]$, where $s\in[0,T]$, into $\lceil \frac{T-s}\ell\rceil$ disjoint intervals of length no bigger than $\ell$ and applying the above inequality to each of the intervals we get for $x\in D_T$
\begin{equation*}
\Bigl\| \int_s^T\int_\0 p_{T-r}(x,y) |b|(V_r(y)+\psi_r(y))\,dydr \Bigr\|_{L_m(\Omega)}	\le C (T-s)^{1-\frac1{4p}},
\end{equation*}
for $C=C(m,p,\|b\|_{L_p(\R)})$, which is the desired bound.

Now we treat the case $p=\infty$. Take $M:=\|b\|_{L_\infty(\R)}$. Then ${b_M(x):=|b(x)|\I(|b(x)|\!>\!M)}$ equals $0$ for Lebesgue a.e. $x\in\R$ and $\|b_M\|_{\C^{-1}}\le \|b_M\|_{L_1(\R)}=0$. Therefore, \cref{L:metaml} with $\gamma=-1$, $f=b_M$, $\psi=u-V$ and $w_{s,t}$ as in \eqref{wstform} implies
for any $(s,T)\in\Delta_{[0,1]}$, $x\in D_T$
\begin{equation*}
\Bigl\|\int_s^T \int_D p_{T-r}(x,y)b_M(u_r(y))\,dydr\Bigr\|_{L_m(\Omega)}=0.
\end{equation*}
Hence for any $(s,T)\in\Delta_{[0,1]}$, $x\in D_T$ we finally get 
\begin{align*}
	\Bigl\|\int_s^T \int_D p_{T-r}(x,y)|b|(u_r(y))\,dydr\Bigr\|_{L_m(\Omega)}\le& 	\Bigl\|\int_s^T \int_D p_{T-r}(x,y)|b|(u_r(y))\I_{|b|(u_r(y))\le M}\,dydr\Bigr\|_{L_m(\Omega)}\\
	&+	\Bigl\|\int_s^T \int_D p_{T-r}(x,y)b_M(u_r(y))\,dydr\Bigr\|_{L_m(\Omega)}\\
 &\le M |T-s|.
\end{align*}
This implies the statement of the lemma.
\end{proof}

Now we are ready to prove \cref{l:class}.
\begin{proof}[Proof of \cref{l:class}]
Let the set $D_t$ be as in \eqref{setdt}. 
Fix arbitrary $(s,t)\in\Delta_{[0,1]}$, $x\in D_t$. Then for any $m\ge2$ we have by \cref{l:49}
\begin{align*}	
\|u_t(x)-V_t(x) - P_{t-s}(u_s-V_s)(x)\|_{L_m(\Omega)}&=\Bigl\|\int_s^t \int_D p_{t-r}(x,y)b(u_r(y))\,dydr\Bigr\|_{L_m(\Omega)}\\
&\le C|t-s|^{1-\frac1{4p}},
\end{align*}	
for $C=C(m,p,\|b\|_{L_p(\R)})>0$ independent of $x$. Since $u$ and $V$ are continuous in $x$, Fatou's lemma implies 
\begin{equation}\label{vclassu}	
\|u_t(x)-V_t(x) - P_{t-s}(u_s-V_s)(x)\|_{L_m(\Omega)}\le C|t-s|^{1-\frac1{4p}},
\end{equation}	
for all $x\in D$. Thus, \eqref{vclassu} holds for all  $(s,t)\in\Delta_{[0,1]}$ and $x\in D$ and therefore $u\in\V(1-\frac1{4p})$.
\end{proof}

Next, as announced above, we show that any analytically weak solution to \eqref{SPDE} satisfies \eqref{SPDEfunc} for Lebesgue a.e. $x\in D$.

\begin{lemma}	\label{l:weakf}
Let $b$ be a measurable function $\R\to\R$, 
$u_0\in\Bsp(\0)$. Let $u$ be an analytically weak  solution  of equation  \eqref{SPDE} with the initial condition 
$u_0$. Let $f\in\C([0,1],\S(D))$ and assume additionally that 
\begin{equation}\label{ass_f}
\sup_{t\in[0,1]}\sup_{x\in D}|f_t(x)|e^{|x|}<\infty.
\end{equation}
 Then  we have for any $t\in[0,1]$
\begin{align}\label{identityweak}
\langle u_t, f_t\rangle&= \langle u_{0},f_{0}\rangle + \int_{0}^t \langle u_r, \frac{\partial  f_r}{\partial r}+ \frac{1}{2}\Delta f_r\rangle\,dr  +\int_0^t\int_D b(u_r(x)) f_r(x)\,dxdr\nn\\
&\phantom{=}+\int_{0}^t\int_D f_{r}(x)W(dr,dx),\quad \P-a.s.
\end{align}
Moreover, for any $\phi\in\S(D)$ such that $\sup_{x\in D}|\phi(x)|e^{|x|}<\infty$, we have for any $t\in[0,1]$
\begin{equation}\label{phires}
\langle u_t,\phi\rangle= \langle u_{0},P_t \phi\rangle   +\int_0^t\int_D P_{t-r}\phi(x)b(u_r(x))\,dxdr
+\int_{0}^t\int_D P_{t-r}\phi(x)W(dr,dx),\,\, \P-a.s.
\end{equation}
\end{lemma}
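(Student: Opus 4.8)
The plan is to deduce \eqref{identityweak} from the abstract identity \eqref{identl41} of \cref{l:phitd}(iii) by identifying the nonlinear integral $\int_0^t\H(ds,f_s)$ with the explicit drift term, and then to obtain \eqref{phires} by the substitution $f_s=P_{t-s}\phi$ exactly as in \cref{l:txrepr}. First I would observe that, for an analytically weak solution, identity \eqref{idweak} of \cref{Def:weaksol} is precisely hypothesis \eqref{uid} of \cref{l:phitd} for the concrete choice
\[
\H_t(\phi):=\int_0^t\la b(u_s),\phi\ra\,ds,\qquad \phi\in\S(D).
\]
This $\H$ is adapted and, for each fixed $\phi$, (absolutely) continuous in $t$, since by the very definition of an analytically weak solution all integrals in \eqref{idweak} are a.s.\ finite. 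Hence \cref{l:phitd}(iii) applies verbatim and yields identity \eqref{identl41} for any $f\in\C([0,1],\S(D))$.

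The key step is to show $\int_0^t\H(ds,f_s)=\int_0^t\int_D b(u_r(x))f_r(x)\,dx\,dr$ a.s. By \eqref{eq:13_10_5} and the explicit form of $\H$, the defining Riemann sums equal $\int_0^{\kappa_n(t)}\int_D b(u_s(x))f_{\kappa_n(s)}(x)\,dx\,ds$ and converge in probability to $\int_0^t\H(ds,f_s)$. It therefore suffices to prove the pathwise convergence
\[
\int_0^{\kappa_n(t)}\!\!\int_D b(u_s(x))f_{\kappa_n(s)}(x)\,dx\,ds\to\int_0^t\!\!\int_D b(u_s(x))f_s(x)\,dx\,ds\quad\text{a.s.}
\]
Since $f\in\C([0,1],\S(D))$ we have $f_{\kappa_n(s)}(x)\to f_s(x)$ for every $(s,x)$, so this is a dominated-convergence statement, and assumption \eqref{ass_f} is exactly what makes domination possible: it gives the uniform bound $|f_{\kappa_n(s)}(x)|\le \Lambda e^{-|x|}$ with $\Lambda:=\sup_{s,x}|f_s(x)|e^{|x|}<\infty$. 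Choosing a fixed $\Phi\in\S(D)$ with $\Phi\ge e^{-|\cdot|}$ (for $D=\R$ e.g.\ $\Phi(x)=e\,e^{-\sqrt{1+x^2}}$; on $[0,1]$ everything is trivially bounded), the well-definedness of the integrals in \cref{Def:weaksol} applied to $\Phi$ gives $\int_0^t\int_D |b(u_s(x))|\Phi(x)\,dx\,ds<\infty$ a.s., whence $\int_0^t\int_D|b(u_s(x))|e^{-|x|}\,dx\,ds<\infty$ a.s. The integrable dominating function $\Lambda e^{-|x|}|b(u_s(x))|\I_{s\le t}$ then yields the claim (the replacement of $\kappa_n(t)$ by $t$ contributes a term bounded by $\Lambda\int_{\kappa_n(t)}^t\int_D e^{-|x|}|b(u_s(x))|\,dx\,ds\to0$). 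Identifying this a.s.\ limit with the in-probability limit $\int_0^t\H(ds,f_s)$ and substituting into \eqref{identl41} gives \eqref{identityweak}.

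Finally, for \eqref{phires} I would fix $t$ and apply \eqref{identityweak} with $f_s:=P_{t-s}\phi$, $s\in[0,t]$, exactly as in Step~1 of the proof of \cref{l:txrepr}: then $\frac{\partial f_s}{\partial s}+\frac12\Delta f_s=0$, $f_0=P_t\phi$ and $f_t=\phi$, so the identity collapses to \eqref{phires} (using that $P_t$ is self-adjoint for the initial term). The only point to check is that this $f$ is admissible, i.e.\ $f\in\C([0,1],\S(D))$ and satisfies \eqref{ass_f}. Continuity of $s\mapsto P_{t-s}\phi$ in $\S(D)$ is standard, and \eqref{ass_f} for $P_{t-s}\phi$ reduces to the fact that, whenever $|\phi(x)|\le Ce^{-|x|}$, one has $|P_\tau\phi(x)|\le C'e^{-|x|}$ uniformly over $\tau\in[0,1]$; this is immediate on $[0,1]$ and follows on $\R$ from a direct heat-kernel estimate (convolution of $e^{-|\cdot|}$ with a Gaussian of bounded variance retains exponential decay).

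The main obstacle is the identification carried out in the second paragraph. Because $b$ is only measurable, one cannot control $\int_D|b(u_s(x))|f_{\kappa_n(s)}(x)\,dx$ through any seminorm bound on $u_s$ of the Mitoma type used in \cref{l:phitd}(i). The exponential-decay hypothesis \eqref{ass_f} is essential precisely here: a merely Schwartz test function, whose decay may be slower than any exponential, cannot be dominated by a single Schwartz function, whereas $e^{-|\cdot|}$ can; this is exactly what lets the well-definedness assumption of \cref{Def:weaksol} furnish an integrable dominating function and thereby upgrade the abstract integral to the explicit drift term.
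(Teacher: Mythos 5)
Your proposal is correct and follows essentially the same route as the paper: invoke \cref{l:phitd}(iii) with $\H_t(\phi)=\int_0^t\la b(u_s),\phi\ra\,ds$, identify $\int_0^t\H(ds,f_s)$ with the explicit drift by dominated convergence using the bound $|f_{\kappa_n(s)}(x)|\le \Lambda e^{-|x|}$ and the a.s.\ integrability of $e^{-|x|}|b(u_s(x))|$ obtained from a fixed Schwartz majorant of $e^{-|\cdot|}$, and then substitute $f_s=P_{t-s}\phi$ to get \eqref{phires}. The only (harmless) addition is that you spell out the heat-kernel estimate showing that \eqref{ass_f} is preserved under $P_\tau$, which the paper asserts without detail.
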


\begin{proof}
First of all we note that a.s.
\begin{equation}\label{lintegr}
\int_0^1\int_D |b(u_r(y))|e^{-|y|}\,dydr<\infty.
\end{equation}
Indeed, for all the three choices of $(D,\S(D))$, see \cref{c:conv}, one can find a test function $\phi\in\S(D)$, such that $\phi(x)\ge e^{-|x|}$, $x\in D$. Thus, the integral in \eqref{lintegr}
is well-defined as a Lebesgue integral by definition of an analytically weak solution $u$.

We apply \cref{l:phitd}(iii). Since $u$ is an analytically weak solution to equation \eqref{SPDE},
identity  \eqref{uid} holds for $\H_t(\phi):=\int_0^t\int_D b(u_r(y))\phi(y)dydr$, where $\phi\in\S(D)$. Therefore all conditions of 
\cref{l:phitd} are satisfied and therefore \eqref{eq:13_10_5} and \eqref{identl41} holds. Let us now identify the process  $\int_0^t \H(ds,f_s)$ from \eqref{eq:13_10_5}.

For  $f\in\C([0,1],\S(D))$, $t\in[0,1]$, we have 
\begin{equation}\label{id1born}
\sum_{i=1}^{\lfloor nt\rfloor} \H_{\frac{i}n}(f_{\frac{i-1}n})-\H_{\frac{i-1}n}(f_{\frac{i-1}n})=\int_0^{\kappa_n(t)}\int_D b(u_{r}(y))f_{\kappa_n(r)}(y)dydr.
\end{equation}
We see that for any $r\in[0,t]$, $y\in D$ we have $b(u_{r}(y))f_{\kappa_n(r)}(y)\I_{r\le \kappa_n(t)}\to b(u_{r}(y))f_{r}(y)$ as $n\to\infty$ thanks to the continuity of $f$. Moreover, by assumption \eqref{ass_f} we have for a universal constant $C>0$
\begin{equation*}
|b(u_{r}(y))f_{\kappa_n(r)}(y)|\le C e^{-|y|}|b(u_{r}(y))|\in L_1([0,1]\times D)\quad \text{a.s.},
\end{equation*}
where we have also used  \eqref{lintegr}. Thus, by the dominated convergence theorem
\begin{equation}\label{id2born}
\int_0^{\kappa_n(t)}\int_D b(u_{r}(y))f_{\kappa_n(r)}(y)dydr\to
\int_0^t\int_D b(u_{r}(y))f_{r}(y)dydr,\quad \text{a.s. as $n\to\infty$}
\end{equation}
On the other hand, by  \eqref{eq:13_10_5}
\begin{equation*}
\sum_{i=1}^{\lfloor nt\rfloor} (\H_{\frac{i}n}(f_{\frac{i-1}n})-\H_{\frac{i-1}n}(f_{\frac{i-1}n}))
			\to\int_0^t \H(dr,f_r),\quad \text{in probability as $n\to\infty$}.
\end{equation*}
Recalling now \eqref{id1born}, \eqref{id2born}, we get that for any $t\in[0,1]$ we have 
\begin{equation*}
\int_0^t \H(dr,f_r)=\int_0^t\int_D b(u_{r}(y))f_r(y)dydr\quad \text{a.s.}
\end{equation*}
Therefore, \eqref{identityweak} follows now from \eqref{identl41}.
 
To show identity \eqref{phires}, we fix $t \in [0,1]$, $\phi\in\S(D)$ such that  $\sup_{x\in D}|\phi(x)|e^{|x|}<\infty$  and take $f_r := P_{t-r}\phi$, $r \in [0,t]$, in \eqref{identityweak}. This is valid because such a function $f$ belongs to $\C([0,1], \S(D))$ and satisfies \eqref{ass_f}. For this choice of $f$, we have $\frac{\partial f_r}{\partial r} + \frac{1}{2}\Delta f_r = 0$, which implies \eqref{phires}.
\end{proof}

\begin{corollary}\label{c:weak}
Under the conditions of \cref{l:weakf}, for any $t\in[0,1]$ we have for Lebesgue a.e. $x\in D$ 
\begin{equation}\label{cond11}
\int_0^t \int_D p_{t-r}(x,y)|b|(u_r(y))\,dydr<\infty\quad \text{$\P$-a.s.}
\end{equation}
and 
\begin{equation}\label{cond22}
u_t(x)= P_t u_0(x)+\int_0^t \int_D p_{t-r}(x,y)b(u_r(y))\,dydr+V_t(x)\quad \text{$\P$-a.s.}
\end{equation}
\end{corollary}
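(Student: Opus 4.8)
The plan is to read off both claims from the ``weak--mild'' identity \eqref{phires} by testing it against a rich family of functions and then transferring the resulting statements, which hold a.s.\ for each fixed test function, into statements valid for Lebesgue-a.e.\ $x$ by Tonelli's theorem. Throughout I would fix a strictly positive weight $\eta\colon D\to(0,\infty)$ with $\int_D\eta<\infty$ and $\sup_{s\in[0,1]}P_s\eta\le C\eta$; concretely $\eta(x)=e^{-|x|}$ when $D=\R$ (the bound $P_s[e^{-|\cdot|}]\le Ce^{-|\cdot|}$ for $s\le1$ follows from $|y|\le|x|+|x-y|$ and $\int_\R g_s(z)e^{|z|}\,dz<\infty$) and $\eta\equiv1$ when $D=[0,1]$, using that $P_s$ preserves constants.

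First I would prove \eqref{cond11}. Recall from \eqref{lintegr} that $\int_0^1\int_D|b(u_r(y))|e^{-|y|}\,dydr<\infty$ a.s., whence $\int_0^t\int_D|b|(u_r(y))\eta(y)\,dydr<\infty$ a.s. Since all integrands are nonnegative, Tonelli together with the symmetry $p_{t-r}(x,y)=p_{t-r}(y,x)$ gives, a.s., $\int_D\eta(x)\bigl(\int_0^t\int_D p_{t-r}(x,y)|b|(u_r(y))\,dydr\bigr)dx=\int_0^t\int_D|b|(u_r(y))P_{t-r}\eta(y)\,dydr\le C\int_0^t\int_D|b|(u_r(y))\eta(y)\,dydr<\infty$. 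Thus a.s.\ the inner $x$-integral is finite for Lebesgue-a.e.\ $x$, and a second application of Tonelli on $\Omega\times D$ (against the weight $\eta>0$) converts this into the desired statement: for Lebesgue-a.e.\ $x$, the integral in \eqref{cond11} is $\P$-a.s.\ finite.

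With \eqref{cond11} in hand, the drift integral defines, for a.e.\ $x$, the defect $\Xi_t(x):=u_t(x)-P_tu_0(x)-\int_0^t\int_D p_{t-r}(x,y)b(u_r(y))\,dydr-V_t(x)$, and I would show $\Xi_t=0$ a.e. Testing against any $\zeta\in\C_c^\infty(D)$ (supported in the interior in the bounded case, so that $\zeta\in\S(D)$ and $\sup_x|\zeta(x)|e^{|x|}<\infty$), and using $|\zeta|\le C\eta$ to justify the ordinary Fubini theorem on the drift term and the stochastic Fubini theorem to write $\langle V_t,\zeta\rangle=\int_0^t\int_D P_{t-r}\zeta(y)W(dr,dy)$, I would obtain $\int_D\zeta(x)\Xi_t(x)\,dx=\langle u_t,\zeta\rangle-\langle u_0,P_t\zeta\rangle-\int_0^t\int_D P_{t-r}\zeta(y)b(u_r(y))\,dydr-\int_0^t\int_D P_{t-r}\zeta(y)W(dr,dy)$, which is exactly the right-hand side of \eqref{phires} (with $\phi=\zeta$) and therefore vanishes a.s. Since $\Xi_t(\cdot)\in L_1^{\loc}(D)$ a.s.\ (by joint continuity of $u,V$, boundedness of $u_0$, and Step~1), testing against a countable subfamily of such $\zeta$ dense enough for the du Bois--Reymond lemma shows $\Xi_t=0$ a.e.\ on a single full-measure event; a final Tonelli argument on $\Omega\times D$ then yields \eqref{cond22}.

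The genuinely delicate points, and the main obstacle, are the two ``quantifier swaps'' from ``a.s.\ for each fixed object'' to ``for a.e.\ $x$, a.s.'', which must be executed by Tonelli on $\Omega\times D$ against the weight $\eta$, together with the verification of the integrability hypotheses needed for the ordinary and stochastic Fubini theorems in Step~2 (all of which reduce to the single a.s.\ bound \eqref{lintegr} through the choice of $\eta$). The growth of $u$ causes no trouble here, since one only ever integrates against compactly supported $\zeta$ or against the summable weight $\eta$.
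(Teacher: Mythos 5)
Your proposal is correct. The first half (the proof of \eqref{cond11}) is essentially identical to the paper's argument: both integrate the drift term against the weight $e^{-|x|}$ (resp.\ a constant on $[0,1]$), use the symmetry of the heat kernel together with $P_s e^{-|\cdot|}\le Ce^{-|\cdot|}$ and \eqref{lintegr}, and then swap quantifiers on $\Omega\times D$. For \eqref{cond22}, however, you take a genuinely different route. The paper substitutes $\phi=p_\eps(x,\cdot)$ at time $t-\eps$ into \eqref{phires} and passes to the limit $\eps\to0$, using \cref{l:phitd}(ii) (which rests on Mitoma's theorem) to get $P_\eps u_{t-\eps}(x)\to u_t(x)$ a.s.\ and It\^o's isometry for $P_\eps V_{t-\eps}(x)\to V_t(x)$; this identifies the mild identity at each individual $x$ for which the drift integral is finite. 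You instead form the defect $\Xi_t$, show via ordinary and stochastic Fubini that $\int_D\zeta\,\Xi_t=0$ a.s.\ for each compactly supported smooth $\zeta$ (matching the right-hand side of \eqref{phires}), and conclude $\Xi_t=0$ Lebesgue-a.e.\ on a single full-measure event by the du Bois--Reymond lemma applied to a countable test family, followed by a final Tonelli swap. Your route avoids the $\eps\to0$ mollification and any further appeal to \cref{l:phitd}(ii) at this stage, at the price of only identifying $u_t(x)$ up to a Lebesgue-null set of $x$ --- which is exactly what the corollary asserts, so nothing is lost; the paper's limiting argument is slightly stronger in that it yields \eqref{cond22} at every $x$ where \eqref{cond11} holds, a fact it exploits later through the set $D_t$ in \cref{l:class}. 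The integrability checks you flag (the $L_1(\eta\,dx)$ bound on the drift term and the hypotheses of the stochastic Fubini theorem) do all reduce to \eqref{lintegr} as you claim, so there is no gap.
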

\begin{proof}
We begin by observing that  by Fubini's theorem and \eqref{lintegr} we have for any $t\in[0,1]$, $x\in D$ 
\begin{equation*}
\int_D \int_0^t \int_D e^{-|x|}p_{t-r}(x,y) |b|(u_r(y))\, dydrdx\le C  \int_0^t \int_D  e^{-|y|}|b|(u_r(y))\, dydr<\infty,
\end{equation*}
where we also applied the elementary inequality $-|x|\le |x-y|-|y|$.

That is,
\begin{equation*}
e^{-|\cdot|}\int_0^t \int_D p_{t-r}(\cdot,y) |b|(u_r(y))\, dydr\in L_1(D)\quad \P-a.s.
\end{equation*}
and thus $\P$-a.s. the function $x\mapsto \int_0^t \int_D p_{t-r}(x,y) |b|(u_r(y))\, dydr$ is finite for Lebesgue a.e. $x\in D$. This shows \eqref{cond11}.

Next, we fix $t\in[0,1]$, $x\in D$ and for an arbitrary $\eps\in(0,t)$ we take in \eqref{phires} $t-\eps$ in place of $t$ and $\phi:=p_{\eps}(x,\cdot)$. We get 
\begin{equation}\label{prelimend} 
P_{\eps}u_{t-\eps}(x)=P_t u_0(x)+\int_0^{t-\eps}\int_D p_{t-r}(x,y) b(u_r(y))\,dydr+P_{\eps}V_{t-\eps}(x),\quad \P-a.s.
\end{equation}
By \cref{l:phitd}(ii), we have $P_{\eps}u_{t-\eps}(x)\to u_t(x)$ a.s. as $\eps\to0$. Using It\^o's isometry, we have  $P_{\eps}V_{t-\eps}(x)\to V_t(x)$ in probability as $\eps\to0$. 
If $x\in D$ is such that the integral  $\int_0^t\int_D p_{t-r}(x,y)|b|(u_r(y))\,dydr$ is finite, then by definition
\begin{equation*}
\lim_{\eps\to0}\int_0^{t-\eps}\int_D p_{t-r}(x,y) b(u_r(y))\,dydr=
\int_0^{t}\int_D p_{t-r}(x,y) b(u_r(y))\,dydr.
\end{equation*}
Thus, for all such $x$ we can pass to the limit in \eqref{prelimend} as $\eps\to0$  to deduce 
\begin{equation*}
u_{t}(x)=P_t u_0(x)+\int_0^{t}\int_D p_{t-r}(x,y) b(u_r(y))\,dydr+V_{t}(x),\quad \P-a.s.,
\end{equation*}
which is \eqref{cond22}.
\end{proof}

Now, after all the preliminary work, we are ready to prove \cref{thm:main2}.

\begin{proof}[Proof of \cref{thm:main2}]

In \cref{thm:main0} we have  already  shown that (b) is equivalent to (d). Thus it remains to show that  (a) is equivalent to (b) and (c) is equivalent to (d) and that (a) implies that $u\in \V(1-\frac1{4p})$.

\textit{Proof that (a) implies $u\in \V(1-\frac1{4p})$ and (b)}. 
Let $u$ be an analytically weak solution to \eqref{SPDE}. Then
by \cref{c:weak} for any $t\in[0,1]$ we have for Lebesgue a.e. $x\in D$ 
\begin{equation*}
\int_0^t \int_D p_{t-r}(x,y)|b|(u_r(y))\,dydr<\infty\quad \text{$\P$-a.s.}
\end{equation*}
and 
\begin{equation*}
u_t(x)= P_t u_0(x)+\int_0^t \int_D p_{t-r}(x,y)b(u_r(y))\,dydr+V_t(x)\quad \text{$\P$-a.s.}
\end{equation*}
Therefore all the conditions of \cref{l:class} are satisfied and $u\in \V(1-\frac1{4p})\subset \V(\frac58)$. 

Next, note that $L_p(\R)\subset \C^{-\frac1p}\subset \C^{-1}$. Let us check that $u$ satisfies the conditions of \cref{Def:weakrsol} with $\beta=-\frac1p$, $q=\infty$. Fix any test function  $\phi \in \S(D)$. We apply \cref{l:43}(ii) with $\tau=\frac58$, $\gamma=-1$. We have that $u\in\V(\frac58)$ by above, and  condition \eqref{gammatau} obviously holds. Therefore there exists a continuous process $\H(\phi)\colon [0,1]\times \Omega\to\R$ such that for any sequence of $\C^\infty_b$ functions $(b^n)_{n\in\Z_+}$ converging to $b$ in $\C^{-1-}$ we have 
	\begin{equation}\label{tmp_h}
	\sup_{t\in[0,1]}
	\lt|\int_0^t\int_\0 \phi(y) b^n(u_r(y))\,dy\,dr-\H_t(\phi)\rt|\to0\quad  \text{in probability as $n\to\infty$.}
\end{equation}
Therefore \eqref{tmp_h} holds for  any sequence of $\C^\infty_b$ functions $(b^n)_{n\in\Z_+}$ converging to $b$ in $\C^{-\frac1p-}$.

Next, we  apply   \cref{c:mainml}  with $\gamma=-\frac5{4}$, $g=b^n$, $f=b$, $X_r(y)= \phi(y)$.  We see that $b^n$ is bounded and $b\in L_p(\R)$. Thus, all the assumptions of \cref{c:mainml} are satisfied and we deduce from \eqref{cBound1K}
\begin{equation*}
\Bigl\|\int_0^t\int_\0 \phi(y) (b-b^n)(u_r(y))\,dy\,dr\Bigr\|_{L_2(\Omega)}\le C\|\phi\|_{L_1(D)}\|b-b^n\|_{\C^{-\frac54}}(1+[u-V]_{\C^{\frac58}L_2([0,1])})
\end{equation*}
for $C=C(p)$.
By above, $u\in \V(\frac58)$ and thus $[u-V]_{\C^{\frac58}L_2([0,1])}<\infty$. By passing to the limit as $n\to\infty$, we get 
\begin{equation*}
\lim_{n\to\infty}\Bigl\|\int_0^t\int_\0 \phi(y) (b-b^n)(u_r(y))\,dy\,dr\Bigr\|_{L_2(\Omega)}=0,
\end{equation*}
where we used the embedding $L_p(\R)\subset \C^{-\frac54}$. 
Comparing this with \eqref{tmp_h},  we see that for any $t\in[0,1]$ we have 
\begin{equation}\label{htphiid}
	\H_t(\phi)=\int_0^t\int_\0 \phi(y) b(u_r(y))\,dy\,dr\quad\text{ a.s.}
\end{equation}
Since $u$ is an analytically weak solution to \eqref{SPDE}, identity \eqref{htphiid} yields that  there exists a set $\Omega'\subset \Omega$ of full probability measure such that on $\Omega'$ we have  for any $t\in[0,1]\cap\Q$
\begin{equation*}
	\langle u_t,\phi\rangle= \langle u_0,\phi\rangle +\int_0^t \langle u_s, \frac{1}{2}\Delta\phi\rangle\,ds  +\H_t(\phi)+W_t(\phi).
\end{equation*}
Since all the terms in the above identity are continuous in $t\in(0,1]$, it follows  that the above equation holds on $\Omega'$ for all $t\in[0,1]$. Therefore, recalling \eqref{tmp_h}, we see that all the conditions of \cref{Def:weakrsol} are satisfied and $u$ is a weak regularized solution of \eqref{SPDE}.

\textit{Proof that (b) implies (a)}. 
It follows from the definition of the weak regularized solution $u$ that
for any test function $\phi\in \S(\0)$  there exists a continuous process $\K(\phi)\colon[0,1]\times\Omega\to\R$ with $\K_0(\phi)=0$, such that for $t\in[0,1]$
\begin{equation}\label{decu}
\langle u_t,\phi\rangle= \langle u_0,\phi\rangle +\int_0^t \langle u_s, \frac{1}{2}\Delta\phi\rangle\,ds  +\K_t(\phi)+W_t(\phi).
\end{equation}
Set $b^n:=G_{1/n}b$. Then $b^n\to b$ in $\C^{-\frac1p-}$ as $n\to\infty$ and, by the definition of a regularized weak solution,  for any 
$t\in[0,1]$
$$
\int_0^t\int_\0 \phi(y) b^n(u_r(y))\,dy\,dr-\K_t(\phi)\to0\quad \text{in probability as $n\to\infty$}.
$$
On the other hand, applying  \cref{c:mainml}  with $\gamma=-\frac5{4}$, $g=b^n$, $f=b$, $X_r(y)= \phi(y)$,  we see as above that  for any $t\in[0,1]$
\begin{equation}\label{limlimint}
\int_0^t\int_\0 \phi(y) b^n(u_r(y))\,dy\,dr\to\int_0^t\int_\0 \phi(y) b(u_r(y))\,dy\,dr\quad \text{in $L_2(\Omega)$ as $n\to\infty$}.
\end{equation}
Therefore for every $t\in[0,1]$ we have $\K_t(\phi)=\int_0^t\int_\0 \phi(y) b(u_r(y))\,dy\,dr$ a.s.
We note that \eqref{limlimint} implies that $\int_0^1\int_\0 \phi(y) |b|(u_r(y))\,dy\,dr<\infty$ a.s. Therefore, the process $t\mapsto\int_0^t\int_\0 \phi(y) b(u_r(y))\,dy\,dr$ is a.s. continuous.
 Since $\K(\phi)$ is also  continuous, we have that a.s. for all $t\in[0,1]$ we have $\K_t(\phi)=\int_0^t\int_\0 \phi(y) b(u_r(y))\,dy\,dr$. Recalling now \eqref{decu}, we see that $u$ is an analytically weak solution to \eqref{SPDE}.

\textit{Proof that (c) implies (d)}. Let $u$ be a mild solution to \eqref{SPDE}. Then  \cref{l:class} implies $u\in\V(\frac58)$. 
Consider  any sequence of functions $(b^n)_{n\in\Z_+}$ in
$\C_b^\infty$ such that $b^n\to b$ in $\C^{-\frac1p-}$.
For a fixed $x\in D$, $t \in [0,1]$ we apply \cref{c:mainml}  with $\gamma=-\frac5{4}$, $g=b^n$, $f=b$, $X_r(y)= p_{t-r}(x,y)$.  We get 
\begin{equation*}
	\Bigl\|\int_0^t\int_\0 p_{t-r}(x,y) (b-b^n)(u_r(y))\,dy\,dr\Bigr\|_{L_2(\Omega)}\le C\|b^n-b\|_{\C^{-\frac54}}(1+[u-V]_{\C^{\frac58}L_2([0,1])}),
\end{equation*}
for $C=C(p)>0$. 
Since by above $u\in\V(\frac58)$,   we get 
$[u-V]_{\C^{\frac58}L_2([0,1])}<\infty$ and thus
\begin{equation}\label{l3outm}
\lim_{n\to\infty} \Bigl\|\int_0^t\int_\0 p_{t-r}(x,y) (b^n-b)(u_r(y))\,dy\,dr\Bigr\|_{L_2(\Omega)}=0 .
\end{equation}

On the other hand, applying \cref{l:43}(i) with $\tau=\frac58$, $\gamma=-1$, we see that there exists a continuous process $H\colon [0,1]\times D\times  \Omega\to\R$ such that for any $N>0$
\begin{equation}\label{tmp_mh}
	\sup_{t\in[0,1]}\sup_\DN
	\lt|\int_0^t\int_\0 p_{t-r}(x,y) b^n(u_r(y))\,dydr-H_t(x)\rt|\!\to0\,\,\text{in probability as $n\to\infty$.}
\end{equation}
Comparing this with \eqref{l3outm}, we see that  there exists a set $\Omega'\subset \Omega$ of full probability measure such that on $\Omega'$ we have  for any $t\in[0,1]\cap\Q$, $x\in D\cap\Q$
\begin{equation*}
	H_t(x)=\int_0^t\int_\0 p_{t-r}(x,y) b(u_r(y))dydr.
\end{equation*}
Using now \eqref{SPDEfunc} we see that on a certain set of full probability measure $\Omega''\subset \Omega'$ we have  for any $t\in[0,1]\cap\Q$, $x\in D\cap\Q$
\begin{equation*}
	u_t(x)=P_tu_0(x)+H_t(x) + V_t(x).
\end{equation*}
Since all the terms in the above identity are continuous in $(t,x)\in(0,1]\times D$, we see that it holds for all 
$t\in[0,1]$, $x\in D$. This together with \eqref{tmp_mh} shows that $u$ satisfies (1) and (2) of \cref{Def:mildrsol} with $K=H$.

\textit{Proof that  (d) implies (c).}
It follows from the definition of the solution $u$ that a.s. for any 
$x\in D$, $t\in(0,1]$
\begin{equation}\label{idmild}
u_t(x)=P_tu_0(x)+K_t(x)+V_t(x).
\end{equation}
Set $b^n:=G_{1/n}b$.  Then $b^n\to b$ in $\C^{-\frac1p-}$ as $n\to\infty$ and, by definition of a regularized mild solution,  for any $x\in\0$, $t\in(0,1]$
$$
\int_0^t\int_\0 p_{t-r}(x,y) b^n(u_r(y))\,dy\,dr-K_t(x)\to0\quad \text{in probability as $n\to\infty$}.
$$
On the other hand, applying \cref{c:mainml}  with $\gamma=-\frac5{4}$, $g=b^n$, $f=b$, $X_r(y)= p_{t-r}(x,y)$, we get 
$$
\int_0^t\int_\0 p_{t-r}(x,y) b^n(u_r(y))\,dy\,dr\to\int_0^t\int_\0 p_{t-r}(x,y) b(u_r(y))\,dy\,dr\quad \text{in $L_2(\Omega)$ as $n\to\infty$},
$$
where we also used that   $u\in\V(\frac58)$ by assumption and thus
$[u-V]_{\C^{\frac58}L_2([0,1])}<\infty$. Therefore, $K_t(x)=\int_0^t\int_\0 p_{t-r}(x,y) b(u_r(y))\,dy\,dr$ a.s. Since $u$ satisfies 
\eqref{idmild}, we get that $u$ is a mild solution to \eqref{SPDE}.
\end{proof}
\begin{proof}[Proof of \cref{thm:main1}]
By \cite[Theorem~2.6]{ABLM24}, equation \eqref{SPDE} has a mild regularized solution, which is strong in the probabilistic sense. By \cref{thm:main0} this solution is analytically mild and analytically weak.

To show strong uniqueness of solutions to equation \eqref{SPDE}, we note that any analytically mild solution as well as any analytically weak solution to this equation is a mild regularized solution by \cref{thm:main0} and belongs to the class $\V(1-\frac1{4p})\subset \V(\frac34)$. By \cite[Theorem~2.6]{ABLM24}, equation \eqref{SPDE} has a unique mild regularized solution in the class $\V(\frac34)$. Therefore, equation \eqref{SPDE} has a unique analytically mild solution, which is also analytically weak solution.	
\end{proof}

\bibliographystyle{alpha}
\bibliography{paper1}

 \end{document}